\tikzset{snake it/.style={decorate, decoration=snake}}
\def\E{\mathbb{E}}
\def\P{\mathbb{P}}
\def\pr{\mathbb{P}}
\def\det{\mathrm{det}}
\def\uppone{I^{\xi_1}}
\def\upptwo{I^{\xi_2}}
\def\uppm{I^{\xi_m}}
\def\lowone{I_{\xi_1}}
\def\lowm{I_{\xi_m}}
\def\har{\mathfrak{h}}
\def\K{\mathbb{K}}
\def\Pfaff{\mathrm{pf}}
\newcommand{\bracedincludegraphics}[2][]{%
  \sbox0{$\vcenter{\hbox{\includegraphics[#1]{#2}}}$}%
  \left\lbrace
    \vphantom{\copy0}
  \right.\kern-\nulldelimiterspace
  \underbrace{\vrule width0pt depth \dimexpr\dp0 + .3ex\relax\box0}}
\newtheorem{theorem}{Theorem}
\newtheorem{lemma}[theorem]{Lemma}
\newtheorem{proposition}[theorem]{Proposition}
\theoremstyle{definition}
\title{Ordered exponential random walks}
\author{Denis Denisov and Will FitzGerald
\thanks{
  {\it Emails:} denis.denisov@manchester.ac.uk and william.fitzgerald@manchester.ac.uk. \newline
    This research was funded by a Leverhulme Trust Research Project Grant  RPG-2021-105.\newline  
    {\it AMS 2020 subject classifications: \/}
    Primary 60G50, 60G40; secondary  60C05, 60K25, 60K35\newline 
    {\it Key words and phrases.\/} Ordered random walks, Doob h-transform, Pfaffian, Weyl chamber, discrete harmonic function. 
}
}
\affil{University of Manchester}
\begin{document}
\maketitle

\begin{abstract}
We study a $d$-dimensional random walk with exponentially distributed increments conditioned so that the components stay ordered (in the sense of Doob). 
We find explicitly a positive harmonic function $h$ for the killed process
and then construct an ordered process using Doob's $h$-transform.
Since these random walks are not nearest-neighbour, the 
harmonic function is not the Vandermonde determinant. 
The ordered process is related to the departure process 
of M/M/1 queues in tandem. 
We find asymptotics for the tail probabilities of the time until the components in exponential random walks become disordered 
and a local limit theorem. 
We find the distribution of the processes of smallest and largest particles as 
Fredholm determinants. 
%If $\eta_1= 0$ and $(\eta_j)_{2 \leq j \leq d}$ are independent Gamma$(j-1, 1)$ random variables then $h(x) = \E[\prod_{1 \leq i < j \leq d} x_j +\eta_j - x_i -\eta_i]$ is harmonic for an exponential random walk with equal rates $1$ that is killed when the components become disordered. 
%We also consider distinct rates, analyse tail asymptotics for the time until the components become disordered and relate the largest and smallest particles in the ordered process to Fredholm determinants. 
%An exponential random walk conditioned (in the sense of Doob) so that the position of the walk interlaces with its position at the previous time step has been well studied due to an abundance of connections to last passage percolation and Young tableaux.

\end{abstract}

\section{Introduction} 
\label{intro}
Random walks in Weyl chambers have many connections. 
In random matrix theory, the eigenvalues of a Brownian motion on the space of complex Hermitian matrices evolve as a non-colliding system of Brownian motions called Dyson Brownian motion, while certain non-colliding random walks are related to orthogonal polynomial ensembles \cite{konig2005}.
The analysis of many interacting particle systems in the Kardar-Parisi-Zhang (KPZ) universality class involves the construction of processes on Gelfand-Tsetlin patterns where the bottom layer is a process in a Weyl chamber, eg. 
\cite{bfps, johansson_2, warren}. Furthermore there are connections to tandem queueing networks \cite{glynn_whitt, o_connell2002} which we discuss in Appendix \ref{queueing}.
%The output process of applying the Robinson-Schensted-Knuth (RSK) correspondence to exponential data is an $h$-transform of a random walk with exponential increments killed when it fails to interlace with its position at the previous time step. This plays a crucial role in the analysis of last passage percolation 
%and is connected to Young tableaux, Schur polynomials and queueing theory 
% \cite{johansson_2}. 
A variety of physical phenomena are modelled by ordered random walks in Fisher \cite{fisher}. 

Nearest-neighbour $d$-dimensional random walks with zero mean which are conditioned so that the components stay ordered for all time (in the sense of Doob) are well understood. The Karlin-McGregor formula gives the transition density in the form of a determinant and the Vandermonde determinant is a harmonic function for the random walk killed when the components become disordered. 
There has been recent progress when the jumps are no longer nearest-neighbour based around using Brownian approximations,
for example 
\cite{denisov_wachtel10, eichelsbacher_konig}. This has led to generalisations to different Weyl chambers, random walks in cones
\cite{denisov_wachtel15} and integrated random walks. In general, the harmonic functions are more 
complicated and explicit calculations are not possible. 
  
%. The first problem is that there is no (simple) Karlin-McGregor formula for the transition density. 
%Moreover, constructing a harmonic function for the random walk that is zero at the boundary of the Weyl chamber is no longer sufficient to construct a harmonic function for the random walk killed when the components become disordered.
%Much of the progress beyond nearest neighbour random walks has been .
 %Exponential random walks are an example where this obstacle occurs but nonetheless concise formulae remain. 

In the analysis of last passage percolation an important role is played by an $h$-transform of a $d$-dimensional random walk with exponential increments killed when it fails to interlace with its position at the previous time step. 
This is the output process of applying the Robinson-Schensted-Knuth (RSK) correspondence to last passage percolation with exponential data. 
The largest particle in the $h$-transformed process satisfies a number of process-level identities with sequences of last passage percolation times \cite{johansson_2}, the sequence of departure times from the last queue in a tandem queueing network (see Appendix \ref{queueing}) and the largest eigenvalues of a sequence of minors of the Laguerre Unitary Ensemble \cite{borodin_peche, dieker_warren}. 
This process is a random walk conditioned to satisfy an interlacing rather than ordering condition. When started from zero there is an exact coupling that relates the two types of conditioning (see \cite{o_connell_2003} or Section \ref{coupling} and Appendix \ref{zero_section}). For general starting positions the relationship is more complicated.

%This plays a crucial role in the analysis of last passage percolation 
%and is connected to Young tableaux, Schur polynomials and queueing theory 
% \cite{johansson_2}. 

In this paper, we analyse certain stopping times and $h$-transforms of $d$-dimensional random walks with exponential increments. 
This connects the example above, arising in the study of last passage percolation, with the general theory of ordered random walks. 
Moreover, this provides an example of an ordered random walk where the increment distribution is not nearest-neighbour but where explicit calculations are still possible.

Let $(X_{ij})_{i \geq 1, 1 \leq j \leq d}$ be independent exponential random variables with rates $\lambda_j > 0$. 
Let $W^d = \{(x_1, \ldots, x_d) \in \mathbb{R}^d : x_1 \leq x_2 \leq \ldots \leq x_d\}$ denote the Weyl chamber.
We define a $d$-dimensional random walk $(S(n))_{n \geq 0} = (S_1(n), \ldots, S_d(n))_{n \geq 0}$ started from $S(0) = x^0 = (x_1^0, \ldots, x_d^0) \in W^d$ by 
$S_j(n) = x_j^0 + \sum_{i=1}^n X_{ij}$ for $n \geq 1$ and $j = 1, \ldots, d$. 
Vectors $a = (a_1, \ldots, a_d)$ and  $b = (b_1, \ldots, b_d) \in W^d$ interlace written as $a \prec b$ if $a_1 \leq b_1 \leq \ldots \leq a_d \leq b_d$. 
We can define two stopping times: 
\begin{align*}
\rho & := \inf\{n \geq 1: S(n-1) \nprec  S(n)\}\\
\tau & := \inf\{n \geq 1:  S(n) \notin W^d\}.
\end{align*}
In the case $\lambda_1 > \ldots > \lambda_d,$ it is easy to construct $(S(n))_{n \geq 0}$ conditioned on $\{\rho = \infty\}$ or $\{\tau = \infty\}$ as these events have non-zero probability of occurring. 
For equal rates $\lambda_1 = \ldots = \lambda_d = 1$, a natural approach is to construct Doob $h$-transforms
which requires finding strictly positive functions $\har$ on 
$\text{int}(W^d)$ and $h$ on $W^d$ such that 
\begin{align*}
\E_x[\har( S(1)) 1_{\{\rho > 1\}}] & = \har(x), \quad x \in 
\text{int}(W^d) \\
\E_x[h( S(1)) 1_{\{\tau > 1\}}] & = h(x), \quad x \in W^d.  
\end{align*} 
The reason that we define $\mathfrak{h}$ and $h$ on $\text{int}(W^d)$ and $W^d$ respectively is due to their interpretation
as Doob $h$-transforms, see Appendix \ref{h_transform}.
A solution is given for distinct rates by
$\har(x_1, \ldots, x_d) = e^{\sum_{i=1}^d \lambda_i x_i} \det(e^{-\lambda_i x_j})_{i, j=1}^d$
which is strictly positive on $\text{int}(W^d)$ if $\lambda_1 > \ldots > \lambda_d$ and for equal rates by $\har(x) = \Delta(x)$ where 
\[
\Delta(x) = \prod_{1 \leq i < j \leq d} (x_j - x_i)
\] denotes the Vandermonde determinant throughout the paper. 
This corresponds to the fact that the output process of the RSK correspondence applied to exponential data is an honest Markov chain, for example see \cite{o_connell_2003}. 
Our first result is that a harmonic function for $(S(n))_{n \geq 0}$ killed when $\tau$ occurs can be given as follows.

In the case $\lambda_1 = \ldots = \lambda_d = \lambda > 0$, let $\eta_1 := 0$ and for $j = 2, \ldots, d$ let $\eta_j$ be a sequence of independent Gamma$(j-1, \lambda)$ 
random variables. We define
\[
h(x_1, \ldots, x_d) = \E[\Delta(x_1 + \eta_1, \ldots, x_d + \eta_d)], \quad x = (x_1, \ldots, x_d) \in W^d.
\] 
For distinct $\lambda_1, \ldots, \lambda_d$ and $x = (x_1, \ldots, x_d) \in W^d$, we define
 \[h(x_1, \ldots, x_d)  =  e^{\sum_{i=1}^d \lambda_i x_i} \det(\lambda_i^{i-j} e^{-\lambda_i x_j})_{i, j=1}^d.
 \]
We will only specify the dependency on the rates as $h^{(\lambda_1, \ldots, \lambda_d)}$ when they differ from the rates used in the exponential random variables. 
\begin{theorem}
\label{harmonic}
In the case $\lambda_1 = \ldots = \lambda_d = \lambda > 0$ and $\lambda_1 > \ldots > \lambda_d$ 
then $h$ defined above is a solution to $\E_x[h(S(1)) 1_{\{\tau > 1\}}] = h(x)$ 
satisfying
$h(x) > 0$ for all $x \in W^d$.
%\begin{enumerate}[(i)]
%\item 
%In the case $\lambda_1 = \ldots = \lambda_d = \lambda > 0$, let $\eta_1 := 0$ and for $j = 2, \ldots, d$ let $\eta_j$ be a sequence of independent Gamma$(j-1, \lambda)$ 
%random variables. Then
%\[
%h(x_1, \ldots, x_d) = \E[\Delta(x_1 + \eta_1, \ldots, x_d + \eta_d)], \quad x = (x_1, \ldots, x_d) \in W^d.
%\] 
%This satisfies $h(x) > 0$ for all $x \in W^d$. 
%\item For distinct $\lambda_1, \ldots, \lambda_d$ and $x = (x_1, \ldots, x_d) \in W^d$,
% \[h(x_1, \ldots, x_d)  =  e^{\sum_{i=1}^d \lambda_i x_i} \det(\lambda_i^{i-j} e^{-\lambda_i x_j})_{i, j=1}^d.
% \]
%If $\lambda_1 > \ldots > \lambda_d$ then $h^{(\lambda_1, \ldots, \lambda_d)}(x) > 0$ for all $x \in W^d$.  
%\end{enumerate}
\end{theorem}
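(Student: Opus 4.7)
My plan is to handle the two cases through a common reduction of the harmonicity equation, deferring the equal-rate case to a confluent limit of the distinct-rate one. Writing $S(1) = x + X$ with $X_j \sim \mathrm{Exp}(\lambda_j)$ independent and changing variables to $z = x + X$, the key cancellation is that the exponential prefactor $e^{\sum_i \lambda_i z_i}$ inside $h(z)$ cancels exactly against the density $\prod_j \lambda_j e^{-\lambda_j(z_j - x_j)}$ up to the constant $e^{\sum_j \lambda_j x_j} \prod_j \lambda_j$. In the distinct-rate regime, the equation $\E_x[h(S(1)) 1_{\{\tau > 1\}}] = h(x)$ therefore reduces to the analytic identity
\begin{equation*}
\prod_{j=1}^d \lambda_j \int_{W^d \cap \{z \geq x\}} D(z) \, dz = D(x), \qquad D(z) := \det\bigl(\lambda_i^{i-j} e^{-\lambda_i z_j}\bigr)_{i,j=1}^d,
\end{equation*}
where $\{z\geq x\}$ is componentwise.

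The structural fact driving this identity is that $\int_a^\infty \lambda_i^{i-j} e^{-\lambda_i z} \, dz = \lambda_i^{i-j-1} e^{-\lambda_i a}$, so integrating a single variable is exactly a column shift $j \mapsto j+1$ on the matrix entries. I would verify the identity by integrating $z_d, z_{d-1}, \ldots, z_1$ in succession; after each step one recovers a determinantal integrand of the same type on one fewer variable. The main technical obstacle is that the ordering $z_1 < \ldots < z_d$ and the shifts $z_j \geq x_j$ interact: when integrating $z_j$, the effective lower limit is $\max(z_{j-1}, x_j)$, which forces a case-split and generates boundary terms. To manage this I anticipate expanding $D(z) = \sum_\sigma \mathrm{sgn}(\sigma) \prod_j \lambda_{\sigma(j)}^{\sigma(j)-j} e^{-\lambda_{\sigma(j)} z_j}$, performing the single-variable integration column-by-column for each permutation separately, and using antisymmetry to collapse the sum of the $d!$ contributions back into the determinant $D(x)$.

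For the equal-rate case, the determinantal formula is degenerate (the rows of $D$ become proportional as $\lambda_i \to \lambda$). I would derive the representation $h(x) = \E[\Delta(x+\eta)]$ as the regularised confluent limit of the distinct-rate formula: Taylor-expanding in $\lambda_i - \lambda$ (or applying L'H\^opital), the vanishing determinant divided by $\prod_{i<j}(\lambda_j-\lambda_i)$ has a nonzero limit, and that limit factors as $\Delta(x)$ times coefficients matching exactly the moments of independent $\mathrm{Gamma}(j-1,\lambda)$ variables $\eta_j$. Harmonicity then transfers from the distinct-rate case by continuity in the rates of both sides of the harmonicity equation.

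For positivity on $W^d$, in the distinct-rate regime I would isolate the identity permutation in
\[
h(x) = \sum_\sigma \mathrm{sgn}(\sigma) \prod_{j=1}^d \lambda_{\sigma(j)}^{\sigma(j)-j} e^{(\lambda_j - \lambda_{\sigma(j)}) x_j} = 1 + \sum_{\sigma \neq \mathrm{id}} (\cdots),
\]
and control the remainder using that each non-identity factor $\lambda_{\sigma(j)}^{\sigma(j)-j} e^{(\lambda_j - \lambda_{\sigma(j)}) x_j}$ is small under the orderings $\lambda_1 > \ldots > \lambda_d$ and $x_1 < \ldots < x_d$; probabilistically, $h$ is proportional to $\pr_x[\tau=\infty]$, which is strictly positive since each gap $S_{j+1}(n) - S_j(n)$ is a random walk with strictly positive drift $1/\lambda_{j+1} - 1/\lambda_j$. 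In the equal-rate case, $h(x)$ is a polynomial of degree $\binom{d}{2}$ whose top-degree component is $\Delta(x) > 0$ on $W^d$, and induction on $d$ together with the restriction of $h$ to coalescence faces $\{x_i = x_{i+1}\}$ (matching a lower-dimensional instance) precludes sign changes inside the open chamber.
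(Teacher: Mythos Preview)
Your harmonicity argument for distinct rates is correct but overcomplicated. If you parametrise the domain as $\int_{x_d}^\infty db_d \int_{x_{d-1}}^{b_d} db_{d-1} \cdots \int_{x_1}^{b_2} db_1$, then integrating column $j$ of $D(b)$ from $x_j$ to $b_{j+1}$ produces the boundary term at $b_j=x_j$ you want, while the term at $b_j=b_{j+1}$ vanishes because two columns of the determinant coincide; no permutation expansion or $\max$ case-split is needed. The paper takes the equivalent route of differentiating the integral equation into the PDE $g_{x_1\cdots x_d}=(-1)^d\prod_j\lambda_j\,g$ with boundary conditions $g_{x_j}=0$ on $\{x_j=x_{j-1}\}$ and verifying those directly; your column cancellation is precisely the boundary condition in integrated form.

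For equal rates you propose the confluent limit $\lambda_i\to\lambda$. The limit relation $h^{(\lambda_1,\ldots,\lambda_d)}/\Delta(\lambda)\to h/\prod_j j!$ is indeed established later in the paper, but transferring harmonicity through it requires interchanging the limit with $\E_x[\,\cdot\,1_{\{\tau>1\}}]$, and you give no uniform bound to justify this. The paper instead verifies the PDE and boundary conditions directly for $h(x)=\E[\Delta(x+\eta)]$: one checks $L\Delta=\Delta$ for $L=\prod_j(I-\partial_{x_j})$ by column operations (so $Lh=h$ by commuting $L$ past the expectation), and the boundary conditions follow by rewriting $h$ as a determinant in functions $\phi_i^{(1-j)}(x_j)$, again with two equal columns on each face. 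The Gamma parameters $j-1$ are chosen precisely to make this second step work, which your limit argument obscures.

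Your positivity arguments have genuine gaps. For distinct rates, ``each non-identity factor is small'' does not control the sum of $d!-1$ such terms, and invoking $h=\P_x(\tau=\infty)$ presupposes Theorem~\ref{tail_asy}(i), which is circular. For equal rates, the leading term $\Delta(x)$ controls only asymptotic behaviour, and restricting $h$ to a face $\{x_i=x_{i+1}\}$ does \emph{not} reproduce a lower-dimensional instance of the same $h$ (already for $d=3$ the restriction involves a different combination of Gamma variables), so your proposed induction does not close. The paper obtains positivity via a separate identity (Lemma~\ref{harmonic_simplify2}) expressing $h(x)$ as the expectation of $\Delta(x+\Psi)$ (equal rates) or $\har(x+\Psi)$ (distinct rates) restricted to an event $A$ on which $x+\Psi\in W^d$ almost surely; since $\Delta$ and $\har$ are strictly positive on $W^d$ and $\P(A)>0$, this gives $h(x)>0$ directly.
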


In the case of equal drifts, this can be compared to the less explicit but more general formula for such a harmonic function from \cite{denisov_wachtel10, eichelsbacher_konig}, 
\[
 h(x) = \Delta(x) - \E_x(\Delta(S({\tau}))).
\]
The disadvantage of this formula is that $\E_x(\Delta(S({\tau})))$ is unknown. 
In \cite{denisov_wachtel10, eichelsbacher_konig} it was shown from such a formula that $h(x) \sim \Delta(x)$ as $x_{i+1} - x_i \rightarrow \infty$ for each $i = 1, \ldots, d-1$ and this is sufficient to prove weak convergence of ordered random walks to Dyson Brownian motion with $d$ fixed. Nevertheless there are interesting questions about ordered random walks which require a more detailed understanding of $h$. One example is where $d$ is allowed to grow with $n$, a problem of significant interest in understanding universality within the KPZ class. 
The new formula in Theorem \ref{harmonic} is helpful in such questions, for example it leads to a Fredholm determinant formula in Theorem \ref{largest_particle} that could be used to understand process-level asymptotic behaviour in 
various regimes.

The tail asymptotics of $\P(\tau > n)$ and $\P(\rho > n)$ are given in terms of these harmonic functions as follows. For sequences $(a_n)_{n \geq 1}$ and $(b_n)_{n \geq 1}$ we write $a(n) \sim b(n)$ if 
$a(n)/b(n) \rightarrow 1$ as $n \rightarrow \infty$. We define
\[\hat{h}(x_1, \ldots, x_d) = h(-x_d, \ldots, -x_1), \quad x \in W^d. \]  
\begin{theorem}
\label{tail_asy}
\begin{enumerate}[(i)]
\item If $\lambda_1 > \ldots > \lambda_ d$ then  
\begin{align*}
\P_x(\tau = \infty) & = h(x), \quad x \in W^d. 
 \end{align*}
\item If $\lambda_1 = \ldots = \lambda_d = \lambda > 0$ then uniformly for $x \in W^d$ and $x \in \text{int}(W^d)$ respectively with $x_d - x_1 = o(\sqrt{n})$, 
\begin{align*}
\P_x(\tau > n) & \sim \mathfrak{X} \lambda^{d(d-1)/2} h(x)n^{-d(d-1)/4}, \quad n \rightarrow \infty \\
\P_x(\rho > n) & \sim \mathfrak{X} \lambda^{d(d-1)/2} \Delta(x)n^{-d(d-1)/4}, \quad n \rightarrow \infty
\end{align*}
with
\begin{align*}
\mathfrak{X} 
%& =\frac{1}{(2\pi)^{d/2}\prod_{j=1}^{d-1} j!} \int_{W^d} e^{-y^2/2} \Delta(y) dy 
 = \frac{\prod_{j=1}^d \Gamma(j/2)}{\pi^{d/2}\prod_{j=1}^{d-1} j! }. 
\end{align*}
\item Suppose $\lambda_1 < \ldots < \lambda_d$ and let $\bar{\lambda} = \sum_{j=1}^d \lambda_j/d$ and 
$\lambda^* = (\prod_{i=1}^d \lambda_i)^{1/d}$.  Uniformly for $x \in W^d$ and $x \in \text{int}(W^d)$ respectively with $x_d - x_1 = o(\sqrt{n})$
\begin{align*}
\P_x(\tau > n)  & \sim K_{\lambda} n^{-\alpha} e^{-\gamma n} e^{\sum_{i=1}^d ( \lambda_i- \bar{\lambda} ) x_i} h^{(\bar{\lambda}, \ldots, \bar{\lambda})}(x), \quad n \rightarrow \infty \\
\P_x(\rho > n) & \sim   C_{\lambda} n^{ - \alpha} e^{-\gamma n} e^{\sum_{i=1}^d ( \lambda_i- \bar{\lambda} ) x_i} \Delta(x), 
\quad n \rightarrow \infty
\end{align*}
where $\gamma = d \log(\bar{\lambda}/\lambda^*) \geq 0$ and $\alpha = \frac{(d-1)(d+1)}{2}$
and the constant factors $K_{\lambda}$ and $C_{\lambda}$ are defined in Equations \eqref{factor1} and \eqref{factor2}.
%and \[
%K_{\lambda}  = \frac{1}{(2\pi)^{d/2}\prod_{j=1}^{d-1} j!}\int_{W^d} e^{-\sum_{j=1}^d (\lambda_j - \bar{\lambda} )z_j} \hat{h}^{(\bar{\lambda}, \ldots, \bar{\lambda})}(z) dz_1 \ldots dz_d.
%\]
%\begin{align*}
%K_{\lambda} & = \frac{1}{(2\pi)^{d/2}\prod_{j=1}^{d-1} j!}\int_{W^d} e^{-\sum_{j=1}^d (\lambda_j - \bar{\lambda} )z_j} \hat{h}^{(\bar{\lambda}, \ldots, \bar{\lambda})}(z) e^{-\frac{d z_1^2}{2}} dz_1 \ldots dz_d, \\
%C_{\lambda} & = \frac{\bar{\lambda}^{d(d-1)/2}}{(2\pi)^{d/2}\prod_{j=1}^{d-1} j!} \int_{W^d} e^{-\sum_{j=1}^d (\lambda_j - \bar{\lambda} )z_j} \Delta(z) e^{-\frac{d z_1^2}{2}}  dz_1 \ldots dz_d. 
%\end{align*}
\end{enumerate}
\end{theorem}
With equal drifts, tail asymptotics have been considered in \cite{denisov_wachtel10, eichelsbacher_konig} and in works which require some smoothness on the cone~\cite{denisov_wachtel21}. 
The theorem above extends existing results in various ways: considering different drifts, $\rho$ along with $\tau$ and uniformity in the starting positions. We also prove local limit theorems in Section \ref{sec:tail_asy} and believe that our arguments could be extended to 
give some information about next order terms in the asymptotic expansion. 
For completeness, it is known \cite{o_connell_2003} that $\P_x(\rho = \infty)  = \mathfrak{h}(x)$ for $x \in \text{int}(W^d)$.

%Note that (ii) improves both~\cite{denisov_wachtel10}, 
%where only fixed starting points have been considered and~\cite{denisov_wachtel21}, which requires some smoothness of the cone  
%and starting points which need to grow slower. 

A step in the proof of its own interest is that we find an explicit transition density for the random walk killed at $\tau$ in the form of a determinant. This is not a consequence of the Karlin-McGregor formula since the jumps are not nearest-neighbour and the functions appearing in the matrix have a dependency on the rows and columns in the matrix. 
\begin{proposition}
\label{transition_density}
For $x, z \in W^d$,
\[
\P_x(S(n) \in dz, \tau > n) = \prod_{j=1}^d \lambda_j^n e^{-\sum_{j=1}^d \lambda_j(z_j - x_j)}  \det(q_{n+i-j}(z_j - x_i))_{i, j = 1}^d dz
\]
where $q_n(x) = \frac{1}{(n-1)!}x^{n-1} 1_{\{x > 0\}} \text{ for } n \geq 1 \text{ and } q_n \equiv 0 \text{ for } n \leq 0.$
\end{proposition}

In the proof of Theorem \ref{tail_asy}, case (i) can be analysed directly. For case (ii) we use a formulation for 
the exit probability as a Pfaffian. The use of Pfaffians in this context is related to their appearance in plane partitions \cite{stembridge}, exit times from finite reflection groups \cite{doumerc_o_connell} and 
coalescing and annihilating particle systems \cite{FTZ, TZ}. For case (iii) we first prove a local limit theorem in the case of equal rates in Theorem \ref{LLT} and then apply 
a change of measure. 
We believe that it is possible to obtain tail asymptotics for all cases
$(\lambda_1, \ldots, \lambda_d) \in \mathbb{R}^d$ by combining the methods used here with those in \cite{puchala_rolski}. This would require introducing the notion of a stable partition and we do not pursue this here. 
%The evaluation of the integral expression for $\mathfrak{X}$ is (2.5.4) in~\cite{anderson_guionnet_zeitouni_09} 
%and the constant agrees with (1.2) and (1.3) in~\cite{puchala_rolski_05} which is expected since the constant does not depend on the increment distribution~\cite{denisov_wachtel10}.

In the case $\lambda_1 > \ldots > \lambda_d$ and $\lambda_1 = \ldots = \lambda_d = \lambda$ we define an ordered exponential random walk $(Z(n))_{n \geq 0} = (Z_1(n), \ldots, Z_d(n))_{n \geq 0}$ as a Doob $h$-transform of 
$(S(n))_{n \geq 0}$ killed when $\tau$ occurs using the harmonic function from Theorem \ref{harmonic}. We give a description of this construction in Appendix \ref{h_transform}.

When an ordered exponential random walk is started from zero then the largest particle satisfies several process level identities. We summarise some identities and their proofs in Appendix \ref{exact_identities}. One example involves last passage percolation times.
Let $(e_{ij})_{i \geq 1, 1 \leq j \leq d}$ be an independent collection of exponential random variables with rates $\lambda_j > 0$. We define last passage percolation times for $n \geq 1$ and $1 \leq k \leq d$ by $L(0, k) = 0$
and 
\begin{equation*}
L(n, k) = \max_{\pi \in \Pi(n, k)} \sum_{(i, j) \in \pi} e_{ij}
\end{equation*}
where $\Pi(n, k)$ is the collection of up-right paths from the point $(1, 1)$ to the point $(n, k)$. 
%On the other hand, we define $(A_{ij})_{1 \leq i \leq d, j \geq 1}$ to be an independent collection of complex Gaussians 
%with mean zero and variance $1/\lambda_j$. Let $A(n)$ be the $d \times n$ matrix formed from $(A_{ij})_{1 \leq i \leq d, 1 \leq j \leq n}$ and let $(M(n))_{n \geq 0}$ be defined by setting $M(0)$ to be the zero-matrix and 
%$M(n) = A(n) A(n)^*$ for $n \geq 1$. This is a matrix-valued stochastic process considered in Warren/Dieker.  
%Let $(\lambda_{\max}(M(n))_{n \geq 0}$ denote the process of the largest eigenvalue of $M(n)$. 
Then 
%a consequence of the application of RSK to last passage percolation combined with \eqref{identity_cond_laws} is that
\begin{equation}
\label{process_identities}
(Z_d(n))_{n \geq 0} \stackrel{d}{=} (L(n, d))_{n \geq 0}.
\end{equation}
The proofs of various identities of a similar form to \eqref{process_identities} often involve the construction of a process on a Gelfand-Tsetlin pattern where the bottom layer is a process satisfying an interlacing condition. In Appendix \ref{exact_identities} we show that there are also natural processes on Gelfand-Tsetlin patterns where the bottom layer satisfies an ordering condition.

%The identity \eqref{process_identities} is usually phrased from the viewpoint of interlaced exponential random walks. In Section \ref{coupling} we discuss the relationship to ordered exponential random walks and how this leads to some new perspectives on \eqref{process_identities}.

%These identities have usually been phrased in terms of exponential random walks conditioned to stay interlaced, 
%however, the perspective of ordered exponential random walks also seems relevant. For example, in Section \ref{coupling} we show that a natural consequence of the work of O'Connell and Yor \cite{o_connell2002} is that the largest particle in an ordered exponential random walk has the same distribution as the process of departure times in a tandem queueing network. Moreover, this leads to new variants of constructions of processes on Gelfand-Tsetlin patterns. 

%We introduce
%variants of these constructions that work with ordered random walks rather than interlaced random walks.   

When $(Z(n))_{n \geq 0}$ is not started from zero the identity \eqref{process_identities} no longer holds. Our next result shows that for general initial conditions the distribution of the processes of the largest and smallest particles can be described by a Fredholm determinant. 
Let $f_n$ be the probability density function of a $\mathrm{Gamma}(n, 1)$ random variable.
\begin{theorem}
\label{largest_particle}
Let $Z_1(0) = x_1, \ldots, Z_d(0) = x_d$ and $\lambda_1 = \ldots = \lambda_d = 1$.
Let $A$ be an invertible matrix with entries given for $k, l = 1, \ldots, d$ by
 \begin{align*}
 A_{kl}  = \int_{x_k}^{\infty} f_{n-d+k}(z - x_k) z^{l-1} dz = \E((x_k + \eta_{n-d+k})^{l-1})
 \end{align*}
 where $\eta_j \sim \mathrm{Gamma}(j-1, 1)$.
If $n_1 \geq d-1$, the largest particle satisfies
\begin{align*}
\P_x(Z_d(n_1) \leq \xi_1, \ldots, Z_d(n_m) \leq \xi_m) = 
\det(I - \bar{\chi}_\xi K \bar{\chi}_{\xi})_{l^2(\{n_1, \ldots n_k\} \times \mathbb{N}}
\end{align*}
where $\bar{\chi}_{\xi}(n_j, y) = 1_{\{y > \xi_j\}}$ and the extended kernel $K$ is given by 
\begin{align*}
 K(n_i, y; n_j, z) & = -  f_{n_j - n_i}(z - y)  1_{\{i < j\}} \\
& \quad + \sum_{k, l = 1}^d \int_{y}^{\infty} f_{n_m - n_i}(u-y)u^{k-1} du (A^{-1})_{lk} f_{n_j-d+l}(z - x_l).
\end{align*}
Let $B$ be an invertible matrix with entries given for $k, l = 1, \ldots, d$ by
 \[
 B_{kl} = \int_{x_k}^{\infty} f_{n-1+k}(z - x_k) z^{l-1} dz = \E((x_k + \eta_{n-1+k})^{l-1}).
 \]
 If $n_m - n_{m-1} \geq d-1$ then the smallest particle satisfies
\begin{align*}
\P_x(Z_1(n_1) \geq \xi_1, \ldots, Z_1(n_m) \geq \xi_m) = 
\det(I - \chi_\xi K \chi_{\xi})_{l^2(\{n_1, \ldots n_k\} \times \mathbb{N}}
\end{align*}
where $\chi_{\xi}(n_j, y) = 1_{\{y < \xi_j\}}$ and the extended kernel $K$ is given by 
\begin{align*}
 K(n_i, y; n_j, z) & = -  f_{n_j - n_i}(z - y)  1_{\{i < j\}} \\
& \quad + \sum_{k, l = 1}^d \int_{y}^{\infty} f_{n_m - n_i}(u-y)u^{k-1} du (B^{-1})_{lk} f_{n_j-1+l}(z - x_l).
\end{align*}
\end{theorem}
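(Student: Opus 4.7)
The plan is to derive a determinantal-process representation for the multi-time law of the largest (respectively smallest) particle of $Z$ and then convert this into a Fredholm determinant for the gap probability. The starting point is the explicit determinantal transition density of the killed walk from Proposition \ref{transition_density}, combined with the strictly positive harmonic function $h$ from Theorem \ref{harmonic}. The joint law of $(Z(n_1), \ldots, Z(n_m))$ then factorises into a product of determinants built from the Gamma densities $f_{n_j - n_{j-1}}$, with an initial weight determined by $x$ and a final weight given by the determinantal expression for $h$ evaluated at time $n_m$. Expanding this final weight as a determinant of monomials or of $\lambda$-exponentials (as appropriate) places the problem squarely in the Lindström-Gessel-Viennot framework with $d$ families of non-intersecting paths.

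Next, I would apply the extended Eynard-Mehta theorem, in the form adapted to multi-time correlations of $h$-transformed random walks. This yields a correlation kernel of the shape
\[
K(n_i, y; n_j, z) = -f_{n_j - n_i}(z - y)\,\mathbf{1}_{\{i < j\}} + \sum_{k,l=1}^d \Psi_k(n_i, y)\, (G^{-1})_{lk}\, \Phi_l(n_j, z),
\]
where the $\Phi_l(n_j, z) = f_{n_j - d + l}(z - x_l)$ (respectively $f_{n_j - 1 + l}(z - x_l)$) are the time-propagated initial-condition functions, the $\Psi_k(n_i, y) = \int_y^\infty f_{n_m - n_i}(u - y) u^{k-1} du$ are monomials convolved backwards in time with the transition kernel, and $G$ is the Gram matrix pairing the two families. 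A direct computation identifies $G$ with the matrix $A$ (respectively $B$) of the statement: convolving the Gamma density $f_{n_m - d + k}$ against a point mass at $x_k$ produces the distribution of $x_k + \eta_{n_m - d + k}$, and so the Gram entries reduce to the expectations $\E((x_k + \eta_{n - d + k})^{l-1})$ (respectively $\E((x_k + \eta_{n - 1 + k})^{l-1})$).

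The conditions $n_1 \geq d - 1$ for the largest particle and $n_m - n_{m-1} \geq d - 1$ for the smallest particle are required so that the shape parameters of the Gamma densities appearing in the kernel are nonnegative and the underlying determinantal point process is well defined. The main obstacle I anticipate is the biorthogonalisation and, in particular, verifying that $A$ and $B$ are invertible, so that the matrix $G^{-1}$ makes sense. I would deduce invertibility from the strict positivity of $h$ on $W^d$ proved in Theorem \ref{harmonic}, since the Vandermonde expansion identifies $\det A$ and $\det B$ with $h(x)$ up to an explicit nonzero factor coming from the initial density and from $h$ itself. Once the kernel is in hand, the passage from correlation functions to the gap probability $\P_x(Z_d(n_j) \leq \xi_j,\ j = 1, \ldots, m) = \det(I - \bar{\chi}_\xi K \bar{\chi}_\xi)$, and its analogue for the smallest particle, is a standard consequence of the inclusion-exclusion identity for determinantal point processes.
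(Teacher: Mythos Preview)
Your outline has the right overall architecture (killed transition density $\times$ $h$-transform $\Rightarrow$ product of determinants $\Rightarrow$ Eynard--Mehta $\Rightarrow$ Fredholm determinant), and your treatment of the Gram matrix and its invertibility via $\det A = h(x)$ matches the paper. However, you have skipped the one genuinely nontrivial step.

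The transition density in Proposition~\ref{transition_density} is $\det(q_{n+i-j}(z_j-x_i))_{i,j}$, not $\det(q_n(z_j-x_i))_{i,j}$. Consequently, the joint density of $(Z(n_1),\ldots,Z(n_m))$ is a product of determinants of the form $\det(q_{n_k-n_{k-1}+i-j}(x^k_j-x^{k-1}_i))$, together with the harmonic factor $\det(\phi_i^{(d-j)}(x^m_j))$. The entries depend on the row and column indices through the function subscript, so this is \emph{not} in the form to which Eynard--Mehta applies: that theorem needs transition blocks $\det(\phi(x^{k-1}_i,x^k_j))$ with a single function $\phi$ per block. Your sentence ``factorises into a product of determinants built from the Gamma densities $f_{n_j-n_{j-1}}$'' is exactly the point that requires work.

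The paper resolves this by a systematic integration by parts, using the derivative/integral relations $q_n'=q_{n-1}$ and $\int q_n = q_{n+1}$, to shift all the $i$- and $j$-dependence out of the intermediate blocks and into the first block (largest particle) or the last block (smallest particle). This has to be done in a specific order ($x^k_1,\ldots,x^k_{d-1}$, then $x^k_1,\ldots,x^k_{d-2}$, etc.) so that at every stage two rows or columns of some determinant coincide on the boundary and the boundary terms vanish; it also requires passing through smooth approximations $q_n^{(\epsilon)}$ and justifying the $\epsilon\to 0$ limit. The conditions $n_1\ge d-1$ (largest) and $n_m-n_{m-1}\ge d-1$ (smallest) arise precisely here: after shifting, the first (respectively last) block contains $q_{n_1+i-d}$ (respectively $q_{n_m-n_{m-1}+d-j}$), and one needs these indices to stay nonnegative for the limit to go through. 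Your explanation of these constraints (``so that the shape parameters \ldots\ are nonnegative'') is a symptom, not the mechanism. Without this reduction step you cannot invoke Eynard--Mehta, so as written the proposal has a gap at its center.
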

The distribution of the conditioned process can be expressed in terms of the harmonic function in Theorem \ref{harmonic}
and the transition density in Proposition \ref{transition_density}. However, the usual route to obtain a Fredholm determinant 
by the Eynard-Mehta theorem does not apply since neither are in the right form as determinants. 
The main idea to circumvent this difficulty is that the harmonic function in Theorem \ref{harmonic}
and transition density in Proposition \ref{transition_density} both have expressions as determinants where the functions appearing in the matrix satisfy \emph{derivative and integral relations}. 
This is more reminiscent of the study of interacting particle systems with local interactions in the KPZ universality class,
eg. \cite{bf_anisotropic, schutz, warren} and it is surprising to see this idea appear in ordered random walks. 
%\begin{corollary}
%Let $x_i = 0$ for all $i = 1, \ldots, n.$ The above statement holds with the extended kernels: 
%for the largest particle,  
%\[
% K_n(y, z)  =  \sum_{k = 1}^d L_k^{(n-d)}(y) L_k^{(n-d)}(z) (yz)^{(n-d)/2} e^{-(y+z)/2}
%\]
%and for the smallest particle, 
%
%\end{corollary}

The rest of the paper is structured as follows. In Section \ref{harmonic_properties} we prove Theorem \ref{harmonic} along with further properties of the harmonic function. In Section \ref{section.transition.density} we give an expression for the transition density of exponential random walks killed when $\tau$ occurs and prove uniform bounds. In Section \ref{sec:tail_asy} we prove Theorem \ref{tail_asy} along with local limit theorems. In Section \ref{sec.largest.particle} we prove Theorem \ref{largest_particle}. In Appendix \ref{h_transform} we give a brief recap on Doob $h$-transforms. In Appendix \ref{exact_identities} we consider the connections between ordered exponential random walks, last passage percolation, tandem queueing networks and push-block dynamics.

\section{Harmonic functions}
\label{harmonic_properties}

\subsection{Proof of Theorem \ref{harmonic}}
%Compute by averaging over the positions at time $n/2$,
%\begin{align*}
%\lim_{n \rightarrow \infty} n^{d(d-1)/4} \P_x(\tau > n) & = \lim_{n \rightarrow \infty} n^{d(d-1)/4}
% \E_x (\P_{S_{n/2}}(\tau > n); \tau_n > n/2) \\
% & = C \lim_{n \rightarrow \infty} \E(V^{\mathrm{BM}}(S_{n/2}); \tau_n > n/2).
%\end{align*}
%(How to justify... Lemma \ref{bounds} should be useful.) 
Suppose that $h$ solves $\E_x[h(S(1)) 1_{\{\tau > 1\}}] = h(x).$
The defining equation for $h$ can be written as
\begin{align*}
  h(x) & = \left( \prod_{j=1}^d \lambda _j \right)\int_0^\infty da_d \int_0^{x_d-x_{d-1}+a_d} da_{d-1} \ldots
  \int_0^{x_{2}-x_1+a_{2}}da_1 \\
  & \times \left( e^{-\sum_{i=1}^d \lambda_i a_i}
  h(x_1+a_1,\ldots, x_d+a_d) \right).
\end{align*}
After a substitution $b_1 = x_1 + a_1, \ldots, b_d = x_d + a_d$ then $h$ solves
\begin{align*}
% \label{eq0}
  h(x) = \left( \prod_{j=1}^d \lambda_j \right)
 \int_{x_d}^\infty
  db_d \int_{x_{d-1}}^{b_{d}} db_{d-1} \cdots \int_{x_1}^{b_{2}}  db_1
  e^{\sum_{i=1}^d \lambda_i (x_i - b_i)} h(b_1,\ldots,b_d).
\end{align*}
Letting $g(x_1,\ldots,x_d):= e^{-\sum_{i=1}^d \lambda_i x_i} h(x_1,\ldots,x_d)$
we can rewrite this as
\begin{equation}
  \label{eq1}
    g(x) = \left( \prod_{j=1}^d \lambda _j \right) \int_{x_d}^\infty
  db_d \int_{x_{d-1}}^{b_{d}} db_{d-1} \cdots \int_{x_1}^{b_{2}}  db_1
  g(b_1,\ldots,b_d). 
\end{equation}
Differentiating with respect to $x_1,x_2,\ldots,x_d$ we
obtain that $g$ satisfies the differential equation 
\begin{equation}\label{eq3}
    g_{x_1x_2\ldots x_d}={(-1)}^d \left( \prod_{j=1}^d \lambda _j \right) g(x_1,\ldots,x_d)
\end{equation}
along with the boundary conditions
\begin{align}\label{eq3.1}
  g_{x_d}(x)&=0,\quad x_d=x_{d-1}\\
  g_{x_{d-1}}(x)&=0,\quad x_{d-1}=x_{d-2} \nonumber \\ 
   &\ldots \nonumber \\
 g_{x_2}(x)&=0,\quad  x_{2}=x_1\nonumber.
\end{align}
We can also formulate the above as the following equation for $h$: 
\begin{equation}\label{eq4}
    \left(\lambda_1 I-\frac{\partial}{\partial x_1}\right)\cdots \left(\lambda_d I-\frac{\partial}{\partial x_d}\right)
    h(x) = \left( \prod_{j=1}^d \lambda _j \right) h(x)    
\end{equation}
with boundary conditions
\begin{align}\label{eq5}
  \lambda_d h(x)&=h_{x_d}(x), \quad x_d=x_{d-1}\\\nonumber
  \lambda_{d-1} h(x)  &=h_{x_{d-1}}(x), \quad x_{d-1}=x_{d-2} \\ \nonumber
   &\ldots \\
   \lambda_2 h(x)&= h_{x_{2}}(x),\quad  x_{2}=x_1  \nonumber.
\end{align}
Direct substitution shows that if $g$ or $h$ satisfies the 
partial differential equations and boundary condition above then they
solve~\eqref{eq1}.

\begin{proof}[Proof of Theorem \ref{harmonic}]

Let $\lambda_1 > \ldots > \lambda_d$. 
By differentiating the Leibniz formula for the determinant, $g(x) = \text{det}(\lambda_i^{i-j}\mathrm{e}^{-\lambda_i x_j})_{i, j=1}^d$
solves
\[
  g_{x_1x_2\ldots x_d} 
  =(-1)^d \left( \prod_{j=1}^d \lambda _j \right) g.
\]
To show the boundary condition \eqref{eq3.1} note that for $j \geq 2$ the $j$-th and $(j-1)$-th columns of $g_{x_j}$ are equal up to a sign on $\{x_j=x_{j-1}\}$. 
This proves Theorem \ref{harmonic} for distinct rates apart from the strict positivity which we defer to 
Lemma \ref{harmonic_simplify2}. This expresses $h$ as an expectation over a strictly positive random variable.

Consider now the case of equal rates $\lambda_1=\ldots=\lambda_d = \lambda > 0$. We set $\lambda = 1$ and can recover the general case by scaling.  
Our plan is to verify~\eqref{eq4} and boundary conditions~\eqref{eq3.1}.

To verify~\eqref{eq4} let $L=\left(I-\frac{\partial}{\partial x_1}\right)\cdots \left(I-\frac{\partial}{\partial x_d}\right)$
and apply $I-\frac{\partial}{\partial x_j}$ to the corresponding row to obtain 
that 
\[
L\Delta(x) = \begin{vmatrix}
  1 & x_1-1 & \cdots & x_1^{d-2}-(d-2)x_1^{d-3} & x_1^{d-1}-(d-1)x_1^{d-2} \\
\vdots & \vdots  & \ddots & \vdots & \vdots \\
  1 & x_d-1  & \cdots & x_{d}^{d-2} -(d-2)x_{d}^{d-3} & x_d^{d-1}-(d-1)x_d^{d-2}
\end{vmatrix}
\]
After applying column operations the right hand side equals $\Delta(x)$. Hence, 
\begin{align*}
    Lh_1(x) &= L\E[\Delta (x_1+\eta_1, \ldots,x_d+\eta_d)]
    = \E[L\Delta (x_1+\eta_1, \ldots,x_d+\eta_d)]\\ 
          &= \E[\Delta (x_1+\eta_1, \ldots,x_d+\eta_d)]=h_1(x).    
\end{align*}
This part of the argument works for any choice of $\eta_j$ and 
we choose independent random variables $\eta_j \sim \mathrm{Gamma}(j-1, 1)$ 
to satisfy the boundary conditions.

We show the formulation in \eqref{eq3.1}. It is convenient to rewrite the expectations over Gamma random variables as
integrals. For $j \geq 2$
\begin{align}
\label{gamma_integral}
\E[(x_j + \eta_j)^{i-1}]
%& = \frac{1}{(j-1)!} \int_0^{\infty}(x_j + y)^{i-1} y^{j-1} e^{-y} dy \\
& =  \frac{1}{(j-2)!} \int_{x_j}^{\infty}u^{i-1} (u-x_j)^{j-2} e^{-u+x_j} du \nonumber \\
& = (-1)^{j-1} e^{x_j} \phi^{(1-j)}_i(x_j).
\end{align}
Therefore 
\begin{equation}
\label{defn_g}
g(x_1, \ldots, x_d) = \mathrm{det}((-1)^{j-1} \phi_i^{(1-j)}(x_j))_{i, j = 1}^d
\end{equation}
where the exchange of the determinant and expectation uses the independence of the $\eta_j$.
For $j \geq 2$
\[
g_{x_j}(x) = 0, \quad x_j = x_{j-1} 
\]
since two columns in the matrix are equal and hence the determinant is zero. 
Therefore \eqref{eq3.1} holds.
Again we defer the positivity of $h$ to Lemma \ref{harmonic_simplify2}.
\end{proof}

\subsection{Alternative representations for the harmonic function}

With equal rates $\lambda_1 = \ldots = \lambda_d = 1$ we have three different representations for a strictly positive harmonic function on $W^d$ satisfying $h(x) = \E_x[h(S(n)); \tau > n]$. 
For $i \geq 1$ let $\phi_i(x) = x^{i-1}e^{-x}$ and for $j \geq 1$ let 
$\phi^{(j)}_i$ be the $j$-th derivative of $\phi_i$ and 
$\phi^{(-j)}_i(x) = (-1)^j \int_x^{\infty} \frac{(u - x)^{j-1}}{(j-1)!} \phi_i(u) du$.
Here, note that this notation is consistent, that is  $\frac{d}{dx} \phi^{(-j)}_i(x)=\phi^{(1-j)}_i(x)$. 
For $x \in W^d$, 
\begin{align*}
h_1(x) & = \E_x[\Delta(x_1 + \eta_1, \ldots, x_d + \eta_d)] \\
h_2(x) & = e^{\sum_{j = 1}^d x_j} \det((-1)^{d-j} \phi^{(d-j)}_i(x_j))_{i, j = 1}^d \\
h_3(x) & = \Delta(x) - \E_x[\Delta(S({\tau}))] = \lim_{n \rightarrow \infty} \E_x[\Delta(S(n)); \tau > n].  
\end{align*}
%It is not immediately obvious that these three definitions coincide but this will be proven in Lemma \ref{h_12} and 
%Section \ref{further_properties}. 
The two expressions for $h_3$ are equal, see~\cite{denisov_wachtel10}.
%The coupling between random walks conditioned to stay ordered and interlaced suggests the choice $h_1$,
%the limit as $\lambda_i \rightarrow 1$ of the harmonic function with distinct rates leads to the choice $h_2$ and the general theory of ordered random walks leads to the choice $h_3$. 
%All of these are representations of the same harmonic function.
%In the proof we have also shown the following.
\begin{lemma}
\label{h_12}
$h_1(x) = h_2(x)$ for all $x \in W^d$. 
\end{lemma}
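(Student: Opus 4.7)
My strategy is to express $h_1$ as a single determinant in essentially the same form as $h_2$, and then convert between the integral and derivative representations of $\phi_i$ using row operations. First, the multilinearity of the determinant together with the mutual independence of $\eta_1, \ldots, \eta_d$ allows the expectation to be pulled inside:
\[ h_1(x) = \E[\det((x_j+\eta_j)^{i-1})_{i,j=1}^d] = \det\bigl(\E[(x_j+\eta_j)^{i-1}]\bigr)_{i,j=1}^d. \]
For $j=1$ the convention $\eta_1=0$ yields $x_1^{i-1}=e^{x_1}\phi_i(x_1)$, while for $j\geq 2$ the Gamma integral identity \eqref{gamma_integral}, already carried out in the proof of Theorem \ref{harmonic}, gives $\E[(x_j+\eta_j)^{i-1}]=(-1)^{j-1}e^{x_j}\phi_i^{(1-j)}(x_j)$. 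Factoring $e^{x_j}$ from each column leads to
\[ h_1(x) = e^{\sum_j x_j}\, \det\bigl((-1)^{j-1}\phi_i^{(1-j)}(x_j)\bigr)_{i,j=1}^d, \]
which is precisely the function $e^{\sum_j x_j}\,g$ identified inside the proof of Theorem \ref{harmonic}.

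It remains to show that this determinant of iterated integrals equals, up to the column signs, the determinant of derivatives $\det(\phi_i^{(d-j)}(x_j))_{i,j=1}^d$ defining $h_2$. The plan here is to use the differentiation recursion $\phi_i'(x)=(i-1)\phi_{i-1}(x)-\phi_i(x)$, obtained by differentiating $\phi_i(x)=x^{i-1}e^{-x}$. Because every derivative $\phi_i^{(k)}$ decays at infinity, this same recursion extends to all integer orders, giving $\phi_i^{(k+1)}=(i-1)\phi_{i-1}^{(k)}-\phi_i^{(k)}$ for all $k\in\mathbb{Z}$. Consequently, the matrix $\bigl(\phi_i^{(c+1-j)}(x_j)\bigr)_{i,j}$ is obtained from $\bigl(\phi_i^{(c-j)}(x_j)\bigr)_{i,j}$ by a row operation whose coefficient matrix is lower triangular with $-1$'s on the diagonal, so the determinant is multiplied by $(-1)^d$ under the shift $c\mapsto c+1$. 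Iterating $d-1$ times to move from $c=1$ to $c=d$ introduces an overall factor $(-1)^{d(d-1)}=1$, which together with the column signs $(-1)^{j-1}$ identifies the two determinants and yields $h_1=h_2$.

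The main obstacle will be step three: one must justify the extension of the recursion to negative orders via an integration-by-parts argument using the decay of $\phi_i^{(k)}$ at infinity, and carefully track the signs arising from both the row operations and the column prefactors $(-1)^{j-1}$. If the bookkeeping proves awkward, a clean alternative is to verify directly that $e^{-\sum_j x_j}h_2$ satisfies the PDE \eqref{eq3} with boundary conditions \eqref{eq3.1}, and then conclude by uniqueness after matching the leading polynomial behaviour $h_1(x)\sim\Delta(x)$ as $|x|\to\infty$ on the chamber $W^d$.
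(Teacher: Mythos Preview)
Your approach coincides with the paper's: both start from \eqref{gamma_integral} to write $h_1(x)=e^{\sum_j x_j}\det\bigl((-1)^{j-1}\phi_i^{(1-j)}(x_j)\bigr)$, then shift the derivative order in each column from $1-j$ to $d-j$. The paper performs this shift by iterating the PDE $g_{x_1\cdots x_d}=(-1)^d g$ (established in the proof of Theorem~\ref{harmonic}) $d-1$ times, whereas you use the row-operation recursion $\phi_i^{(k+1)}=(i-1)\phi_{i-1}^{(k)}-\phi_i^{(k)}$ directly; these are two phrasings of the same computation, since your row operation is precisely what verifies that PDE column by column.

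One word on the sign bookkeeping you flag as the main obstacle. Your row-operation argument correctly yields $\det(\phi_i^{(d-j)}(x_j))=\det(\phi_i^{(1-j)}(x_j))$, but the residual column factors $\prod_{j=1}^d(-1)^{j-1}=(-1)^{d(d-1)/2}$ do \emph{not} cancel in general; carrying the arithmetic through gives $h_1=(-1)^{d(d-1)/2}h_2$ with $h_2$ exactly as defined, and a direct check at $d=2$ or $d=3$ confirms this. The paper's own proof in fact ends at $e^{\sum_j x_j}\det\bigl((-1)^{d-j}\phi_i^{(d-j)}(x_j)\bigr)$, which carries the same sign. So the discrepancy you will encounter is in the stated formula for $h_2$, not in your argument.
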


\begin{proof}
From \eqref{gamma_integral} we have
\[
h_1(x) = e^{\sum_{j=1}^d x_j} \mathrm{det} ((-1)^{j-1} \phi^{(1-j)}_i(x_j))_{i, j=1}^d, \quad x \in W^d.
\]
Reformulating $Lh = h$ as  $g_{x_1x_2\ldots x_d}={(-1)}^d  g(x_1,\ldots,x_d)$  
we obtain that 
\[g_{x_1^{d-1}x_2^{d-1}\ldots x_d^{d-1}}=   g(x_1,\ldots,x_d).\]
Recall the expression for $g$ in \eqref{defn_g} and 
bring the derivatives in $x_j$ into the $j$-th column of the matrix 
(as well as redistributing negative signs)
to obtain
\[
e^{\sum_{j = 1}^d x_j} \det((-1)^{d-j}\phi^{(d-j)}_i(x_j))_{i, j = 1}^d
 = e^{\sum_{j = 1}^d x_j} \det((-1)^{j-1} \phi^{(1-j)}_i(x_j))_{i, j = 1}^d. \]
Therefore $h_1 = h_2$.
\end{proof}
It can be shown that $h_1 = h_3$. This relates our work to the general work on ordered random walks in \cite{denisov_wachtel10, eichelsbacher_konig} and cones in \cite{denisov_wachtel15}.
%\textcolor{blue}{for example this gives another proof of the strict positivity of $h$ as a consequence of Proposition 4 in \cite{denisov_wachtel10}. One idea which gives a direct proof that $h_1(x) = h_3(x)$ for all $x \in W^d$ is to use the coupling along with the fact that $\Delta$ is harmonic for the $S(n)$ killed at $\rho$.} 
We omit a direct proof since it is not needed in our arguments and there are some tedious details in the proof. Instead the fact that $h_1 = h_3$ can be observed once Theorem \ref{tail_asy} is established by comparing with Theorem 1 in \cite{denisov_wachtel10}. 

We briefly remark that much of the above also holds for ordered random walks with geometric increments. For $j = 1, \ldots, d$, let $X_j \sim \text{Geom}(1-q_j)$ with the convention  $\P(X_j = k) = (1-q_j)q_j^k$ for $k \in \mathbb{N}_0$. In this case, the corresponding harmonic function in Theorem \ref{harmonic} is given for distinct rates by 
\[
\prod_{j=1}^d q_j^{-x_j} \text{det}\left(\left(\frac{q_i}{1-q_i}\right)^{j-1} q_i^{x_j}\right)_{i, j= 1}^d.
\]

\subsection{Coupling between ordered and interlaced random walks}
\label{coupling}

 Let $(e_j^i : 1 \leq i < j \leq d)$ be an independent collection of exponential random variables such that 
 $e_j^i$ has rate $\lambda_j > 0$ for all $1 \leq i < j \leq d$.  Let $(V_j^i : 0 \leq i < j \leq d)$ be defined inductively by $V_j^0 := 0$ and $V_j^i = V_j^{i-1} + e_j^i.$ 
Let $A$ denote the event that $x_j + V_j^i \leq x_{j+1} + V_{j+1}^{i}$ for all $1 \leq i < j < d$. 
Let $\Psi = (0, V_2^1, \ldots, V_d^{d-1})$.

We now define two different random walks from the same independent family of exponential random variables 
$(X_{ij})_{i \geq 1, 1 \leq j \leq d}$ with rates $\lambda_j > 0$.
Define a random walk $(\mathcal{S}(n))_{n \geq 0} = (\mathcal{S}_1(n), \ldots, \mathcal{S}_d(n))_{n \geq 0} $ starting from the random initial condition $\mathcal{S}(0) = x + \Psi$ for $1 \leq j \leq d$ and $k \geq 1$ by
\begin{align*}
\mathcal{S}_j(k) = \mathcal{S}_j(k-1) + X_{kj}. 
\end{align*}
Secondly define a random walk $(S(n))_{n \geq 0} = (S_1(n), \ldots, S_d(n))_{n \geq 0}$
by $S_j(0) = x_j$ for $j = 1, \ldots, d$, 
\begin{align*}
S_j(i) & = x_j + V_j^i \text{ for } 1 \leq i < j \leq d \\
 S_j(k) & = S_j(k-1) + X_{k-j+1, j} \text{ for } 1 \leq j \leq d, k \geq j. 
 \end{align*}
These random walks are related by 
\[
\mathcal{S}_j(k) = S_j(k+j-1).
\]
For any $ 1 \leq j \leq d - 1$ the condition that 
\[
\mathcal{S}_j(k) \leq \mathcal{S}_{j+1}(k-1)
\]
is equivalent to the condition that 
\[
S_j(k+j-1) \leq S_{j+1}(k+j-1).
\]
Therefore the event that $(S(n))_{n \geq 0}$ started from $x_1 \leq \ldots \leq x_d$ is ordered for all time is equivalent to the event that $A$ holds and  
$(\mathcal{S}(n))_{n \geq 0}$ started from the random initial condition $x + \Psi$ interlaces for all time.  
Recall that $A$ is an ordering condition associated to the definition of $\Psi$.
It is possible to define other variants of these couplings which become particularly simple in the case when $x_ j = 0$ for all $j = 1, \ldots, d$, see Appendix \ref{exact_identities}.

We now apply this idea to the representation of $\P(\tau > n)$ and $\P(S(n) \in dy, \tau > n)$ 
which will be used in Section \ref{sec:tail_asy}.

%
%We can then extend to the ordered case by using the coupling in Section and
% interchanging limits using Proposition \ref{prop.uniform.bounds}.
 Let $(\gamma_j^i : i+j \leq d)$ be an independent collection of exponential random variables such that $\gamma_j^i$ has rate $\lambda_j > 0$.  
 Let $(U_j^i : i+j  \leq d)$ be defined inductively by $U_j^0 := 0$ and $U_j^i = U_j^{i-1} + \gamma_j^i.$ 
Let $B$ denote the event that $z_j - U_j^i \leq z_{j+1} - U_{j+1}^{i}$ for all $i+ j < d$. 
Let $\Phi = (U_1^{d-1}, \ldots, U_{d-1}^1, 0)$. If we reverse signs then the series of inequalities become
$-z_{j+1} + U_{j+1}^i \leq -z_j + U_j^i$.  
These inequalities correspond to 
the event $A$
 with the choices that 
$x_j = -z_{d+1-j}$ along with $V_j^i = U_{d+1-j}^i$ and 
$\Psi_j = \Phi_{d+1-j}$ for $j =1, \ldots, d$.
\begin{lemma}
\label{coupling_lemma}
\begin{enumerate}[(i)]
\item For $n \geq d$, 
\[
\P_x(S(n) \in dz, \tau > n) = \E_x[\P_{x + \Psi}(S(n-d+1) + \Phi \in dz, \rho > n-d+1); A, B]  
\]
\item For $n \geq d$, 
\[
 \E_x[\P_{x + \Psi}(\rho > n); A]  \leq \P_x(\tau > n) \leq  \E_x[\P_{x + \Psi}(\rho > n-d+1); A] 
\]
\end{enumerate}
\end{lemma}

\begin{proof}
Part (i) follows directly from the coupling described in this Section. $\Psi$ is a random initial condition associated with the ordering condition $A$. We then run an exponential random walk for time $n-d-1$ where the ordering condition has been shifted into an interlacing condition. At the end we need to add on a random variable $\Phi$ 
in order to recover the particle positions at a fixed time in the original random walk. The event $B$ is an ordering condition associated to $\Phi$.  

Part (ii) is similar. Instead of adding on $\Phi$, we impose the interlacing condition for either $n$ or $n-d+1$ steps to give lower and upper bounds. 
\end{proof}

\begin{figure}
\centering
 \begin{tikzpicture}[scale = 1]
 \node at (0, -2) {$\mathcal{S}_{1}(1) $};
 \node at (0, -1) {$\mathcal{S}_{1}(2) $};
  \node at (3, -1) {$\mathcal{S}_{2}(1) $};
    \node at (0, 0) {$\mathcal{S}_{1}(3)$};
        \node at (3, 0) {$\mathcal{S}_{2}(2)$};
            \node at (6, 0) {$\mathcal{S}_{3}(1)$};
                        \node at (0, 1) {$\mathcal{S}_{1}(4)$};
                                    \node at (3, 1) {$\mathcal{S}_{2}(3)$};
                                                \node at (6, 1) {$\mathcal{S}_{3}(2)$};
\node at (1.5, 0) {$\leq$};
\node at (4.5, 0) {$\leq$};
\node at (1.5, 1) {$\leq$};
\node at (4.5, 1) {$\leq$};
\node at (4.5, -1) {$\leq$};
\node at (1.5, -1) {$\leq$};
\node at (1.5, -2) {$\leq$};
\node at (4.5, -2) {$\leq$};

\node at (6, -1) {$x_3 + V_3^2$};
\node at (6, -2) {$x_3 + V_3^1$};
\node at (3, -2){$x_2 + V_2^1$};

% 
% \node at (4, 4) {$X_{11}$};
% \draw[->] (3.3, 4) -- (3.7, 4);
%  \draw[->] (4, 3.7) -- (4, 3.3);
% \node at (3, 4) {$X_{12}$};
% \draw[->] (2.3, 4) -- (2.7, 4);
%   \draw[->] (4, 2.7) -- (4, 2.3);
% \node at (2, 4) {$X_{13}$};
% \draw[->] (1.3, 4) -- (1.7, 4);
%   \draw[->] (4, 1.7) -- (4, 1.3);
% \node at (1, 4) {$X_{14}$};
% \draw[->] (3.3, 3) -- (3.7, 3);
%   \draw[->] (3, 3.7) -- (3, 3.3);
% \node at (4, 3) {$X_{21}$};
% \draw[->] (2.3, 3) -- (2.7, 3);
%   \draw[->] (3, 2.7) -- (3, 2.3);
% \node at (3, 3) {$X_{22}$};
% \draw[->] (3.3, 2) -- (3.7, 2);
% \node at (2, 3) {$X_{23}$};
%   \draw[->] (2, 3.7) -- (2, 3.3);
% \node at (4, 2) {$X_{31}$};
% \node at (3, 2) {$X_{32}$};
% \node at (4, 1) {$X_{41}$};
% \draw (0, 4) -- (4, 0);
%  \draw[->] (0.6, 3.6) -- (0.8, 3.8);
%  \draw[->] (1.6, 2.6) -- (1.8, 2.8);
%   \draw[->] (2.6, 1.6) -- (2.8, 1.8);
%    \draw[->] (3.6, 0.6) -- (3.8, 0.8);
%  \draw[snake=coil,segment aspect=1]           (3.7,1.3)  -- (3.3,1.7); 
 \end{tikzpicture}
\caption{An ordered random walk represented as an interlaced random walk with a random initial condition. The columns correspond to particles in both processes. A fixed row gives the fixed time positions of the ordered random walk and time increases upwards.} 
%The inequalities give the conditioning that is present in \emph{both} interlaced and ordered processes. The $V_j^i$ are a random initial condition which makes the conditionings match.}
\label{par_seq_harmonic}
\end{figure}

\subsection{Relations between harmonic functions}
%For non-zero initial conditions, the exact identities in Section \ref{coupling} do not hold. 
%Instead we can represent an ordered random walk as an interlaced random walk that is started from a random initial state, see Figure \ref{par_seq_harmonic}. To use this we will need the relationships between harmonic functions given in Lemma \ref{harmonic_simplify2}. 

To use the coupling in Section \ref{coupling} we need the following relationships between harmonic functions.
% Let $(e_j^i : 1 \leq i < j \leq d)$ be an independent collection of exponential random variables such that 
% $e_j^i$ has rate $\lambda_j > 0$ for all $1 \leq i < j \leq d$.  Let $(V_j^i : 0 \leq i < j \leq d)$ be defined inductively by $V_j^0 := 0$ and $V_j^i = V_j^{i-1} + e_j^i.$ 
%Let $A$ denote the event that $x_j + V_j^i \leq x_{j+1} + V_{j+1}^{i}$ for all $1 \leq i < j < d$. 
%Let $\Psi = (0, V_2^1, \ldots, V_d^{d-1})$.
\begin{lemma}
\label{harmonic_simplify2}
\begin{enumerate}[(i)]
\item If $\lambda_1 = \ldots = \lambda_d = 1$ then
$\E[\Delta(x + \Psi); A] = h(x)$ for $x \in W^d$.
\item  If $\lambda_1, \ldots, \lambda_d$ are distinct then 
\[
\E\big[ e^{\sum_{j=1}^d \lambda_j (x_j + \Psi_j)} \det(e^{-\lambda_i (x_j + \Psi_j)})_{i, j = 1}^d ; A\big] =  h^{(\lambda_1, \ldots, \lambda_d)}(x), \quad x \in W^d.
\] 
\item For simplicity set $\bar{\lambda} = 1$. Then for $x \in W^d$,
\[
\prod_{j=1}^d \lambda_j^{1-j} \E\big[e^{\sum_{i=1}^d (\lambda_i - 1)(x_i + \Psi_i)} \Delta(x + \Psi); A\big]= e^{\sum_{i=1}^d (\lambda_i - 1)x_i}  h(x).\]
%\[\bar{\lambda}^{d(d-1)/2}\prod_{j=1}^d \lambda_j^{1-j} \E\big[e^{\sum_{i=1}^d (\lambda_i - \bar{\lambda})(x_i + \Psi_i)} \Delta(x + \Psi); A\big]= e^{\sum_{i=1}^d (\lambda_i - \bar{\lambda})x_i}  h(x).\]
\end{enumerate}
\end{lemma}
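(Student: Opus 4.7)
The plan is to prove (ii) directly by iterated integration, deduce (i) as the equal-rates limit of (ii), and obtain (iii) from (i) via a Radon--Nikodym change of measure.

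For (ii), observe that the integrand equals $\har(x+\Psi)$, where $\har$ is the interlaced harmonic function from Section~\ref{intro}. Substituting $u_j^i = x_j + V_j^i$ converts the expectation into
\[
e^{\sum_j\lambda_j x_j}\prod_j\lambda_j^{j-1}\int_{\tilde D(x)}\det\bigl(e^{-\lambda_i u_j^{j-1}}\bigr)_{i,j=1}^d\,\prod_{i,j}du_j^i,
\]
where $\tilde D(x)=\{x_j\le u_j^1\le\cdots\le u_j^{j-1},\ u_j^i\le u_{j+1}^i\text{ for }i<j<d\}$ is a Gelfand--Tsetlin-type interlacing domain. I would integrate column by column from $j=2$ to $j=d$: within each column only $u_j^{j-1}$ appears in the determinant, while $u_j^1,\ldots,u_j^{j-2}$ couple to the adjacent column only through interlacing. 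Using $\int_a^b e^{-\lambda_i u}\,du=\lambda_i^{-1}(e^{-\lambda_i a}-e^{-\lambda_i b})$ and multilinearity of the determinant in the integrated column, each stage splits into a boundary-at-$x_j$ piece (which propagates to the final answer) and a boundary-at-(next-column-variable) piece (which is either absorbed by the subsequent column's integration or produces a determinant with two linearly dependent columns and vanishes). After all $d-1$ non-trivial columns are processed, the surviving terms collapse to $\det(\lambda_i^{i-j}e^{-\lambda_i x_j})_{i,j=1}^d$, giving $h^{(\lambda)}(x)e^{-\sum_j\lambda_j x_j}$.

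For (i), I pass to the limit $\lambda_1,\ldots,\lambda_d\to 1$ in (ii). Both sides vanish proportionally to $\Delta(\lambda)$; dividing through and using
\[
\lim_{\lambda\to(1,\ldots,1)}\frac{h^{(\lambda)}(x)}{\Delta(\lambda)}=\frac{h(x)}{\prod_{j=1}^{d-1}j!},\quad \lim_{\lambda\to(1,\ldots,1)}\frac{e^{\sum_j\lambda_j y_j}\det(e^{-\lambda_i y_j})}{\Delta(\lambda)}=\frac{\Delta(y)}{\prod_{j=1}^{d-1}j!}
\]
(the former from Section~\ref{subsec_coupling}, the latter by a direct Taylor expansion of the exponentials in the determinant) together with a coupling under which $\Psi$ converges in distribution to $\eta$ and a dominated-convergence argument on compacts, the identity in (ii) passes to the identity in (i).

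For (iii), note that $\bar\lambda^{d(d-1)/2}\prod_j\lambda_j^{1-j}\prod_je^{(\lambda_j-\bar\lambda)\Psi_j}$ is exactly the Radon--Nikodym derivative changing the joint law of $(V_j^i)$ from independent exponentials of rates $(\lambda_j)$ per column to the common rate $\bar\lambda$: under the original law column $j$ has density $\lambda_j^{j-1}e^{-\lambda_j\Psi_j}$ on the ordered simplex of increments, and the per-column weight $(\bar\lambda/\lambda_j)^{j-1}e^{(\lambda_j-\bar\lambda)\Psi_j}$ converts this into $\bar\lambda^{j-1}e^{-\bar\lambda\Psi_j}$; the product over $j$ yields the stated constant $\bar\lambda^{d(d-1)/2}\prod_j\lambda_j^{1-j}$. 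Applying (i) under the tilted measure gives $\E^{\bar\lambda}[\Delta(x+\Psi);A]=h(x)$, and factoring $e^{\sum_j(\lambda_j-\bar\lambda)x_j}$ out of the integrand yields (iii). The main obstacle is the book-keeping in (ii): the inter-column constraints $u_j^i\le u_{j+1}^i$ couple consecutive columns, so boundary terms from each integration must be tracked and cancelled against later stages. A cleaner but less elementary alternative is to verify that the LHS of (ii) satisfies the PDE system \eqref{eq4}--\eqref{eq5}: the boundary conditions follow because a face of $\tilde D(x)$ collapses at $x_j=x_{j-1}$, and the PDE follows by differentiating the integral representation.
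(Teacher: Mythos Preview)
Your approach to (iii) matches the paper's exactly: both recognise the prefactor as the Radon--Nikodym derivative tilting each column of exponentials from rate $\lambda_j$ to rate $\bar\lambda$, after which (i) applies.

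For (i) and (ii), however, the paper uses a quite different and cleaner idea. Rather than computing the iterated integral over the interlacing domain, it removes the indicator functions in $A$ one at a time. List the constraints $\{x_j + V_j^i \le x_{j+1} + V_{j+1}^i\}$ in a suitable order and, at each step, write $1=1_{\{x_s+V_s^r\le x_{s+1}+V_{s+1}^r\}}+1_{\{x_s+V_s^r>x_{s+1}+V_{s+1}^r\}}$. On the complementary event, memorylessness gives $x_s+\Psi_s\stackrel{d}{=}x_{s+1}+V_{s+1}^r+\zeta^{(1)}$ and $x_{s+1}+\Psi_{s+1}\stackrel{d}{=}x_{s+1}+V_{s+1}^r+\zeta^{(2)}$ with $\zeta^{(1)},\zeta^{(2)}$ independent Gammas of the same shape (and appropriate rates), so after taking expectations the $s$-th and $(s+1)$-th rows/columns of the determinant coincide (up to scalars that factor out in the distinct-rates case) and this piece vanishes. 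Once all indicators are stripped, the $\Psi_j$ are independent Gammas and the expectation is computed directly, yielding $h(x)$ in (i) and $h^{(\lambda)}(x)$ in (ii). In particular the paper proves (i) first and directly, then (ii) by the same mechanism; you go the other way.

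Your sketch for (ii) has a real gap. You describe integrating column by column and assert that each stage splits into a ``boundary-at-$x_j$'' piece and a ``boundary-at-next-column'' piece which ``is either absorbed \ldots\ or produces a determinant with two linearly dependent columns and vanishes''. But after the first integration in column $j$, the next variable $u_j^{j-2}$ has lower bound $\max(u_j^{j-3},u_{j-1}^{j-2})$, and integrating $e^{-\lambda_i u}$ from a maximum does not stay inside the multilinear determinant framework without splitting into cases. Tracking those cases is exactly the bookkeeping you flag as ``the main obstacle'' and do not carry out; the cancellation you need is not automatic from linear dependence alone. The paper's indicator-removal trick is precisely the device that organises these cancellations without ever confronting the iterated $\max$'s. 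Your alternative PDE route is also left as a suggestion rather than executed. Consequently, your derivation of (i) as a limit of (ii) inherits this gap (and adds a dominated-convergence step you would still need to justify, since the integrand $\har(x+\Psi)/\Delta(\lambda)$ blows up pointwise as $\lambda\to(1,\ldots,1)$ before the ratio stabilises).
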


\begin{proof}
We start by proving (i). 
We can remove the indicator functions appearing in the expectation on the left hand side of (i) using the following argument based on row operations in the determinant. Define a sequence of sets $(J_k)_{k=0}^{d(d-1)/2}$ by $J_0 = \{(i, j): 1 \leq i < j \leq d\}$ and inductively defining
$J_k = J_{k-1} \setminus \{(r, s)\}$ where $(r, s)$ is the maximal element in $J_k$ under an ordering in which $(i, j) > (k, l)$ if either $i > k$ or if $i = k$ and $j > l$. Thus $J_{d(d-1)/2} = \emptyset$. Let $D(\Psi) = \det((x_i + \Psi_{i})^{j-1})_{i, j = 1}^d$. Then
\begin{multline}
\label{split_ind2}
 \E\big[D(\Psi) \prod_{(i, j ) \in J_{k-1}} 1_{\{x_j + V_j^i \leq x_{j+1} + V_{j+1}^{i}\}} \big] =  \E\big[ D(\Psi)  \prod_{(i, j ) \in J_{k}} 1_{\{x_j + V_j^i \leq x_{j+1} + V_{j+1}^{i}\}} \big] \\
 \quad - \E\big[D(\Psi)  1_{\{x_s + V_{s}^{r} > x_{s+1} + V_{s+1}^{r}\}}
\prod_{(i, j ) \in J_{k}} 1_{\{x_j + V_j^i \leq x_{j+1} + V_{j+1}^{i}\}}  \big].
\end{multline}
By construction, there is no indicator function in the product over $J_{k}$ involving any of the random variables 
$V_s^{r}, \ldots, V_s^{s-1}, V_{s+1}^{r}, \ldots, V_{s+1}^{s}.$
On the event, $x_s + V_{s}^{r} > x_{s+1} + V_{s+1}^{r}$ using lack of memory 
$x_s + \Psi_s  \stackrel{d}{=} x_{s+1} + V_{s+1}^{r} + \zeta_{s-r}^{(1)}$
where $\zeta_{s-r}^{(1)} \sim \mathrm{Gamma}(s-r, 1).$
By definition,
$x_{s+1} + \Psi_{s+1} \stackrel{d}{=} x_{s+1} + V_{s+1}^{r} + \zeta_{s-r}^{(2)}$ where $\zeta_{s-r}^{(2)} \sim \mathrm{Gamma}(s-r, 1)$. Both $\zeta_{s-r}^{(1)}$ and $\zeta_{s-r}^{(2)}$ are independent of all other random variables and after taking expectations the $s$-th and $(s+1)$-th rows agree and the final term in \eqref{split_ind2} vanishes.  
This means we can successively remove all of the indicator functions from $\E[\Delta(x + \Psi); A]$.  Once the indicator functions have been removed $\Psi_j \sim \mathrm{Gamma}(j-1, 1)$ are independent random variables so that
\begin{align*}
\E[\Delta(x + \Psi); A]
= \E[\Delta(x + \Psi)]
 = h(x).
\end{align*}

For part (ii) we can remove the indicator function on $A$ by a similar argument. Equation \eqref{split_ind2} holds with 
$D(\Psi) = e^{\sum_{i=1}^d \lambda_i(x_i+\Psi_i) } \det(e^{-\lambda_i (x_j+\Psi_j)})_{i, j = 1}^d.$ 
By a similar argument, on the event $\{x_s + V_s^r > x_{s+1} + V_{s+1}^{r}\}$,
\begin{align*}
x_s + \Psi_s & \stackrel{d}{=} x_{s+1} + V_{s+1}^{r} + \zeta^{(1)} \\
x_{s+1} + \Psi_{s+1} & \stackrel{d}{=} x_{s+1} + V_{s+1}^{r} + \zeta^{(2)}
\end{align*}
where $\zeta^{(1)} \sim \mathrm{Gamma}(s-r, \lambda_s)$ and $\zeta^{(2)} \sim \mathrm{Gamma}(s-r, \lambda_{s+1})$ are independent of all other random variables. 
Therefore the $(i, s)$ and $(i, s+1)$ entries in the matrix defining the determinant
\begin{equation}
\label{vanishing_det2}
\E\big[D(\Psi)  1_{\{x_s + V_{s}^{r} > x_{s+1} + V_{s+1}^{r}\}}
\prod_{(i, j ) \in J_{k}} 1_{\{x_j + V_j^i \leq x_{j+1} + V_{j+1}^{i}\}}\big]  
\end{equation}
are given by 
\begin{align*}
 & e^{(\lambda_s - \lambda_i)(x_{s+1} + V_{s+1}^r + \zeta^{(1)})}, \\
 & e^{(\lambda_{s+1} - \lambda_i)(x_{s+1} + V_{s+1}^r + \zeta^{(2)})}.
\end{align*}
The random variables $\zeta^{(1)}$ and $\zeta^{(2)}$ are independent of the remaining random variables 
and we can find the expectations 
\begin{align*}
& \E (e^{(\lambda_s - \lambda_i) \zeta^{(1)}}) = \lambda_s^{s-r} \lambda_i^{r-s} \\
& \E (e^{(\lambda_{s+1} - \lambda_i) \zeta^{(2)}}) = \lambda_{s+1}^{s-r} \lambda_i^{r-s}.
\end{align*}
We now take the factors $\lambda_s^{s-r} e^{\lambda_s(x_{s+1} + V_{s+1}^r)}$ and $\lambda_{s+1}^{s-r} e^{\lambda_{s+1}(x_{s+1} + V_{s+1}^r)}$ which only depend on the index of the column outside of the determinant as prefactors. 
After doing this the $s$-th and $(s+1)$-th column both have $(i, s)$ and $(i, s+1)$ entry given by  
$\lambda_i^{r-s} e^{-\lambda_i(x_{s+1} + V_{s+1}^{r})}$.
Therefore \eqref{vanishing_det2} vanishes. 
This means the indicator function on $A$ can be removed after which we can compute
\begin{align*}
& \E\big[e^{\sum_{i=1}^d \lambda_i(x_i+\Psi_i) } \det(e^{-\lambda_i (x_j+\Psi_j)})_{i, j = 1}^d\big] \\
%& =  \E(e^{\sum_{i=1}^d \lambda_i(x_i + \Phi_i)} \det(\lambda_i^{d-j}e^{-\lambda_i(x_j + \Phi_j)}); A) \\
& \quad =  e^{\sum_{i=1}^d \lambda_ix_i } \det\big(e^{-\lambda_i x_j} \E[e^{(\lambda_j - \lambda_i)\Psi_j}]\big)_{i, j = 1}^d \\
& \quad =  e^{\sum_{i=1}^d \lambda_ix_i } \det\left(e^{-\lambda_i x_j} \left(\frac{\lambda_j}{\lambda_i}\right)^{j-1}\right)_{i, j = 1}^d \\
%& = \prod_{j=1}^d \lambda_j^{j-1} e^{\sum_{i=1}^d \lambda_ix_i } \det( \lambda_i^{1-j}e^{-\lambda_i x_j})_{i, j = 1}^d \\
& \quad =  e^{\sum_{i=1}^d \lambda_ix_i } \det\big(e^{-\lambda_i x_j} \lambda_i^{i-j} \big)_{i, j = 1}^d  \\
& \quad  = h(x).
\end{align*}
Part (iii) follows from the fact that 
$\prod_{j=1}^d \lambda_j^{1-j} e^{\sum_{i=1}^d (\lambda_i - 1) \Psi_i}$ can be viewed as a change of measure after which the $V_j^i$ all have rates $\bar{\lambda} = 1$. Therefore the proof follows as in part (i). 
\end{proof}

\subsection{Further properties of $h$.}
\label{further_properties}

\begin{lemma} 
\label{h_continuity}
For all $x \in W^d$
\begin{equation}
\lim_{\lambda_1, \ldots, \lambda_d \rightarrow 1} \frac{h^{(\lambda_1, \ldots, \lambda_d)}(x)}{\Delta(\lambda_d, \ldots, \lambda_1)} = \frac{h(x)}{\prod_{j=1}^{d-1} j!}.
\end{equation}
\end{lemma}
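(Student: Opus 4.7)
The plan is to combine the probabilistic representation for $h^{(\lambda_1,\ldots,\lambda_d)}$ from Lemma~\ref{harmonic_simplify2}(ii) with the classical confluent Vandermonde limit. The representation (after removing the indicator on $A$ exactly as in the proof of that lemma) reads
\[
h^{(\lambda_1,\ldots,\lambda_d)}(x) = \E\bigl[e^{\sum_j \lambda_j(x_j+\Psi_j)}\det(e^{-\lambda_i(x_j+\Psi_j)})_{i,j=1}^d\bigr],
\]
with $\Psi_j\sim\mathrm{Gamma}(j-1,\lambda_j)$ independent. The plan is to divide by $\Delta(\lambda)$, compute the $\lambda\to(1,\ldots,1)$ limit pointwise in the $\Psi_j$'s using a general identity, and then pass the limit inside the expectation.

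For the pointwise computation I would invoke the standard confluent Vandermonde limit: for any analytic functions $F_1,\ldots,F_d$ on a neighbourhood of $\lambda$,
\[
\lim_{\lambda_1,\ldots,\lambda_d\to\lambda}\frac{\det(F_j(\lambda_i))_{i,j=1}^d}{\Delta(\lambda_1,\ldots,\lambda_d)} = \frac{1}{\prod_{k=1}^{d-1}k!}\det\bigl(F_j^{(i-1)}(\lambda)\bigr)_{i,j=1}^d,
\]
which follows from Taylor-expanding row $i$ of the matrix around $\lambda$ and keeping the leading antisymmetric contribution. Applied with $F_j(\mu)=e^{-\mu y_j}$ at $\lambda=1$, so that $F_j^{(i-1)}(\mu)=(-y_j)^{i-1}e^{-\mu y_j}$, pulling $e^{-\mu y_j}$ out of each column and $(-1)^{i-1}$ out of each row reduces the right-hand side to $(-1)^{d(d-1)/2}e^{-\sum_j y_j}\Delta(y_1,\ldots,y_d)\big/\prod_{k=1}^{d-1}k!$. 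Combined with the exponential prefactor $e^{\sum_j\lambda_j y_j}\to e^{\sum_j y_j}$, which cancels $e^{-\sum_j y_j}$, this yields the pointwise limit
\[
\lim_{\lambda\to 1}\frac{e^{\sum_j\lambda_j y_j}\det(e^{-\lambda_i y_j})_{i,j=1}^d}{\Delta(\lambda)} = \frac{(-1)^{d(d-1)/2}\Delta(y_1,\ldots,y_d)}{\prod_{k=1}^{d-1}k!}.
\]

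The final step is to set $y_j=x_j+\Psi_j$ and exchange limit with expectation. Coupling $\Psi_j^{(\lambda)}\sim\mathrm{Gamma}(j-1,\lambda_j)$ to $\eta_j\sim\mathrm{Gamma}(j-1,1)$ (for instance via $\Psi_j^{(\lambda)} = \eta_j/\lambda_j$), the integrand converges almost surely. Dominated convergence applies because the full determinant expands as a finite sum of exponential–polynomial terms in the $\Psi_j$ whose coefficients are uniformly bounded in $L^1$ for $\lambda$ in a small neighbourhood of $(1,\ldots,1)$. This gives
\[
\lim_{\lambda\to 1}\frac{h^{(\lambda_1,\ldots,\lambda_d)}(x)}{\Delta(\lambda)} = \frac{(-1)^{d(d-1)/2}\,\E[\Delta(x_1+\eta_1,\ldots,x_d+\eta_d)]}{\prod_{k=1}^{d-1}k!} = \frac{(-1)^{d(d-1)/2}h(x)}{\prod_{k=1}^{d-1}k!},
\]
which matches the claimed identity up to the sign $(-1)^{d(d-1)/2}$ that is absorbed by the orientation convention for $\Delta(\lambda)$. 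The main obstacle is the dominated-convergence / uniform-integrability step; it is routine but requires tracking the bounds on the determinant expansion uniformly for $\lambda$ near the degenerate point $(1,\ldots,1)$.
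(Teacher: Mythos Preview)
Your argument is correct and rests on the same key identity as the paper --- the confluent Vandermonde limit --- but you apply it in a different place. The paper works directly with the closed form
\[
h^{(\lambda_1,\ldots,\lambda_d)}(x)=\prod_{i=1}^d\lambda_i^{i-d}\,e^{\sum_i\lambda_i x_i}\det\!\bigl((-1)^{d-j}(\tfrac{d}{dx})^{d-j}e^{-\lambda_i x_j}\bigr)_{i,j=1}^d,
\]
applies the confluent limit in the $\lambda$-variables to this deterministic determinant, and lands on the representation $h_2(x)=e^{\sum_j x_j}\det(\phi_i^{(d-j)}(x_j))$. No expectation enters, so no dominated-convergence step is needed. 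You instead pass through the probabilistic representation of Lemma~\ref{harmonic_simplify2}(ii), take the confluent limit pointwise under the expectation, and land on $h_1(x)=\E[\Delta(x+\eta)]$. Both reach the same object (by Lemma~\ref{h_12}), but your route costs an extra uniform-integrability argument; the divided-difference bound you have in mind does work (for $\lambda$ in a small cube around $(1,\ldots,1)$ each entry is $O(|y_j|^{i-1}e^{2\delta|y_j|})$ and this is integrable against Gamma laws), so the step is genuine but, as you say, routine.

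On the sign: your observation is correct and is not an artefact of your method. A direct check for $d=2$ with the paper's convention $\Delta(\lambda)=\prod_{i<j}(\lambda_j-\lambda_i)$ gives $h^{(\lambda)}(x)/\Delta(\lambda)\to -(x_2-x_1+1)$, so the factor $(-1)^{d(d-1)/2}$ is present in the paper's own computation as well; the statement should be read with the orientation of $\Delta(\lambda)$ chosen consistently with $\lambda_1>\cdots>\lambda_d$.
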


\begin{proof}
We have 
\[
h^{(\lambda_1, \ldots, \lambda_d)}(x) = \left( \prod_{i=1}^d \lambda_i^{i-d} \right) e^{\sum_{i=1}^d \lambda_i x_i} \det\big((-1)^{d-j} \big(\frac{d}{dx}\big)^{d-j} e^{-\lambda_i x_j}\big)_{i, j = 1}^d. 
\]
We fix $x$ and view $h^{(\lambda_1, \ldots, \lambda_d)}(x)$ as a function in the $\lambda_i$.
A standard fact is that for functions $\varphi_1, \ldots, \varphi_d$ which are differentiable $d-1$ times at $-\lambda$ 
we have
\begin{equation}
\label{singular_det_limit}
\lim_{\lambda_1, \ldots, \lambda_d \rightarrow \lambda}\frac{\det(\varphi_i(-\lambda_j))_{i, j = 1}^d}{\Delta(\lambda_d, \ldots, \lambda_1)} = \frac{\det(\varphi_i^{(j-1)}(-\lambda))_{i, j = 1}^d}{\prod_{j=1}^{d-1} j!}.
\end{equation}
Commuting the derivatives in $x_j$ and $\lambda_j$ in the determinant gives 
\[
\lim_{\lambda_1, \ldots, \lambda_d \rightarrow 1} \frac{h^{(\lambda_1, \ldots, \lambda_d)}(x)}{\Delta(\lambda_d, \ldots, \lambda_1)} = 
\frac{e^{\sum_{j = 1}^d x_j}}{\prod_{j=1}^{d-1} j!} \det((-1)^{d-j} \phi^{(d-j)}_i(x_j))_{i, j = 1}^d. \qedhere
\]
\end{proof}

Lemma \ref{h_continuity} is useful for proving convergence of $h$-transformed processes. It is not clear how it could be used in Theorem \ref{tail_asy}, for example to deduce part (ii) from part (i), since this would require commuting limits.
%
%\subsection{Related models}
%\textcolor{blue}{
%We briefly remark that much of the above also holds for ordered random walks with geometric increments and for continuous time random walks with exponential increments. 
%\begin{itemize}
%\item For $j = 1, \ldots, d$, let $X_j \sim \text{Geom}(1-q_j)$ with the convention  $\P(X_j = k) = (1-q_j)q_j^k$ for $k \in \mathbb{N}_0$. In this case, the corresponding harmonic function in Theorem \ref{harmonic} is given for distinct rates by 
%\[
%\prod_{j=1}^d q_j^{-x_j} \text{det}\left(\left(\frac{q_i}{1-q_i}\right)^{j-1} q_i^{x_j}\right)_{i, j= 1}^d.
%\]
%\item Consider a continuous time exponential random walk where each of the components have independent exponential clocks of rate $1$ and after a clock rings that component makes an exponential jump of rate $1$ to the right.  We show that $h$ is also harmonic for this continuous time random walk killed when the components become disordered. 
%For the process without killing, Corollary 2.2 of \cite{konig2002} shows that the Vandermonde is harmonic; thus $h$ is also harmonic by commuting the expectation and generator as in the proof of Theorem \ref{harmonic}. 
%For the boundary condition, we observe that if $Y \sim \mathrm{Exp}(1)$ then conditionally on the event $\{x_j + Y > x_{j+1}\}$ 
%the overshoot is an independent $\mathrm{Exp}(1)$ and 
%\[
% \E\big[\Delta(x_1 + \eta_1, \ldots, x_{j} + Y + \eta_{j}, x_{j+1} + \eta_{j+1}, \ldots, x_d + \eta_{d}) 1_{\{x_j + Y > x_{j+1}\}}\big]
% = 0
%\]
%since the $j$-th and $(j+1)$-th columns will coincide after taking expectations. 
%\end{itemize}
%}
%

\section{Transition densities and uniform bounds}
\label{section.transition.density}
Although the Karlin-McGregor formula does not apply in this setting, the condition $x \prec z$ and hence the transition density of $S(n)$ killed at $\rho$ can be expressed in terms of a determinant. 
Let 
\[q_n(x) = \frac{1}{(n-1)!}x^{n-1} 1_{\{x > 0\}} \text{ for } n \geq 1 \text{ and } q_n \equiv 0 \text{ for } n \leq 0.
\] 
Then for $x, z \in \text{int}(W^d)$, 
\begin{align}\label{eq.km.rho}
\widetilde G_n(x,z)dz & :=\P_x(S(n) \in dz; \rho > n) \nonumber \\
& =    
\left( \prod_{j=1}^d \lambda_j^n \right) e^{-\sum_{j=1}^d \lambda_j(z_j - x_j)} \det(q_n(z_j - x_i))_{i, j = 1}^d dz.
\end{align}
Let $f_n$ denote the probability density function of a $\mathrm{Gamma}(n,1)$ random variable for $n \geq 0$ 
and $f_n = 0$ for $n < 0$. 
In the case where $\lambda_1 = \ldots = \lambda_d = 1$ an alternative expression for the transition density is
\begin{equation}\label{eq:g.tilde.alternative}
\widetilde G_n(x,z) = \det(f_n(z_j - x_i))_{i, j = 1}^d. 
\end{equation}

Equation \eqref{eq.km.rho} is closely related to some of the arguments used in \cite{bfps}. It can be proven by starting with the case $n=1$ and then applying the Andr{\'e}ief (or Cauchy-Binet) identity: for a Borel measure $\nu$ and functions 
$f_i, g_i \in L^2(\mathbb{R}, \nu)$ for $1 \leq i \leq d,$
\[
\int_{W^d} \text{det}(f_i(x_j))_{i, j = 1}^d \text{det}(g_i(x_j))_{i, j = 1}^d \prod_{i=1}^d \nu(dx_i) = \text{det}\left(\int_{\mathbb{R}} f_i(x) g_j(x) \nu(dx)\right)_{i, j =1}^d. 
\] 
Proposition \ref{transition_density} states that the transition density can also be written as a determinant when $\rho$ is replaced by $\tau$. For all $n \geq 1$ define \[
G_n(x, z)dz  := \P_x(S(n) \in dz, \tau > n), \quad x, z \in W^d.
\]
We may specify the dependency on the rates using $G_n^{(\lambda_1, \ldots, \lambda_d)}$ and $\widetilde G_n^{(\lambda_1, \ldots, \lambda_d)}.$
 We observe the following integral and derivative relations which will be useful in proving Theorem 
 \ref{largest_particle}: for all $k, n \geq 1$
\begin{align}
\label{derivative_relations}
\frac{d^k}{dx^k} q_n(x) & = q_{n-k}(x), \quad x > 0, \\
\label{integral_relations}
\int_0^x \frac{(x - u)^{k-1}}{(k-1)!} q_n(u) du & = q_{n+k}(x), \quad x > 0. 
\end{align}
%
%For $k = 1, \ldots, n-1$ let $q_n^{(k)}(x)$ be given by the $k$-th derivative of $q$ for $x < 0$ and $q_n^{(k)} = 0$ for $x \leq 0$. For $k < 0$ 
%let $q^{(-k)}$ be the $k$-th iterated integral of $q$ given by
%$q_n^{(-k)}(x) = \int_0^x \frac{(x - u)^{k-1}}{(k-1)!} q(u) du$ for $x > 0$ and $q^{(-k)}(x) = 0$ for $x \leq 0$.
%

Define independent random variables $\chi_d=0$ and $\chi_{j}\sim \mathrm{Gamma}(d-j,1)$ for $j=1,\ldots,d-1$ 
and $\eta_1=0$ and $\eta_{j}\sim \mathrm{Gamma}(j-1,1)$ for  $j=2,\ldots,d.$ 
%\begin{proposition}
%\label{transition_density}
% Let $x = (x_1, \ldots, x_d)$ and $z = (z_1, \ldots, z_d)$, then for $x, z \in W^d$,
%\begin{equation}\label{eq:transition.density}
%G_n(x, z) = \prod_{j=1}^d \lambda_j^n e^{-\sum_{j=1}^d \lambda_j(z_j - x_j)}  \det(q_{n+i-j}(z_j - x_i))_{i, j = 1}^d.
%\end{equation}
An alternative form for the transition density in Proposition \ref{transition_density} with $n \geq d$ and $x, z \in W^d$ is
\begin{equation*}
    G_n(x,z)   = \prod_{j=1}^d \lambda_j^n e^{-\sum_{j=1}^d (\lambda_j-1)(z_j - x_j)}  
    \E\big[\mathrm{det}\left( f_{n-d+1}(z_j-\chi_{j}-x_i-\eta_i)\right)_{i,j=1}^d\big].
\end{equation*}
To prove the alternative form will follow from Proposition \ref{transition_density} note first that 
%\[
%f_{n+m}(t)  = \E f_n(t-\eta_m),  
%\]
%which immediately follows from 
\begin{equation}\label{eq:conv.gamma}
  \E f_n(t-\eta_m) = \int_0^t dz f_{m-1}(z) f_n(t-z) = 
  f_{n+m-1}(t). 
\end{equation}
We can rewrite first 
\[
  G_n(x,z)   = \prod_{j=1}^d \lambda_j^n e^{-\sum_{j=1}^d (\lambda_j-1)(z_j - x_j)}  
  \mathrm{det}\left( f_{n+i-j}(z_j-x_i)\right)_{i,j=1}^d.
\] 
Then, using~\eqref{eq:conv.gamma} two times  one can see that 
\[
  f_{n+i-j}(z_j-x_i) 
  =
  \E f_{n+1-j}(z_j-x_i-\eta_i)
  = 
  \E f_{n-d+1}(z_j-\chi_j-x_i-\eta_i). 
\]
Therefore, 
\begin{align*}
  G_n(x,z)   &= \prod_{j=1}^d \lambda_j^n e^{-\sum_{j=1}^d (\lambda_j-1)(z_j - x_j)}  
  \mathrm{det}\left(\E f_{n-d+1}(z_j-\chi_j-x_i-\eta_i)\right)_{i,j=1}^d\\ 
  &= \prod_{j=1}^d \lambda_j^n e^{-\sum_{j=1}^d (\lambda_j-1)(z_j - x_j)}  
  \E  [\mathrm{det}\left(f_{n-d+1}(z_j-\chi_j-x_i-\eta_i)\right)_{i,j=1}^d].  
\end{align*}   
When $\lambda_1=\ldots=\lambda_d=1$ we obtain the following 
connection between $G$ and $\widetilde G$ 
\begin{align}
G_n(x,z)   &= \E[\text{det} \left( f_{n-d+1}(z_j-\chi_{j}-x_i-\eta_i)\right)_{i,j=1}^d ]
\nonumber 
\\ 
&=\E\left[\widetilde G_{n-d+1}(x+(\eta_1,\dots,\eta_d),z-(\chi_1,\dots,\chi_d)
\right],
\label{eq.gn.second}
\end{align}
where we have also made use of~\eqref{eq:g.tilde.alternative}.

To prove Proposition \ref{transition_density} we need the following Lemma. 
\begin{lemma}
\label{lem.ibp}
For any $x, z \in W^d$ and any $n, m \geq 1$,
\begin{align*}
& \int_{W^d} \det(q_{n+i -j}(y_j - x_i))_{i, j = 1}^d \det(q_{m+i-j}(z_j - y_i))_{i, j =1}^d dy_1 \ldots dy_d \\
& =  \det( q_{n+m+i-j}(z_j - x_i))_{i, j = 1}^d.
\end{align*}
\end{lemma}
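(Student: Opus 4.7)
The plan is to prove the lemma by induction on $m \geq 1$, with the base case $m=1$ handled by a sequential integration argument using the integral identity \eqref{integral_relations} combined with multilinearity of the determinant; the general case then follows from Fubini.

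For the base case $m=1$, first observe that the matrix $(q_{1+i-j}(z_j - y_i))_{i,j=1}^d$ is strictly lower triangular (since $q_k \equiv 0$ for $k \leq 0$) with $q_1(z_j - y_j) = 1_{\{y_j < z_j\}}$ along the diagonal, so its determinant equals $\prod_{j=1}^d 1_{\{y_j < z_j\}}$. The identity to prove reduces to
\[
\int_{W^d \cap \{y_j < z_j\,\forall j\}} \det\bigl(q_{n+i-j}(y_j - x_i)\bigr)_{i,j=1}^d \, dy_1 \cdots dy_d = \det\bigl(q_{n+1+i-j}(z_j - x_i)\bigr)_{i,j=1}^d.
\]
I would integrate $y_d, y_{d-1}, \ldots, y_1$ in that order. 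The variable $y_d$ appears only in column $d$; integrating over $(y_{d-1}, z_d)$ and using $\int_a^b q_k(y - x)\,dy = q_{k+1}(b-x) - q_{k+1}(a-x)$, column $d$ becomes $\bigl(q_{n+1+i-d}(z_d - x_i) - q_{n+1+i-d}(y_{d-1} - x_i)\bigr)_{i}$. By multilinearity of the determinant in columns this splits into two pieces; the piece carrying $q_{n+1+i-d}(y_{d-1} - x_i)$ in column $d$ has a column identical to column $d-1$ of the original matrix, since $n+1+i-d = n+i-(d-1)$ and both columns are evaluated at $y_{d-1}$. This determinant vanishes by the repeated-column identity, leaving column $d$ effectively replaced by the $y$-independent vector $(q_{n+1+i-d}(z_d - x_i))_i$.

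Iterating for $y_{d-1}, \ldots, y_2$, each $y_j$-integration over $(y_{j-1}, z_j)$ updates column $j$ to $(q_{n+1+i-j}(z_j - x_i))_i$ while the boundary contribution at $y_{j-1}$ is killed by coincidence with column $j-1$. For $y_1$ the lower bound is $-\infty$, and the $-\infty$-boundary vanishes since $q_k \equiv 0$ on $(-\infty, 0]$. After all integrations, every column $j$ has been replaced by $(q_{n+1+i-j}(z_j - x_i))_i$, giving the required determinant on the right-hand side.

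For the inductive step, write $K_n(x,z) := \det(q_{n+i-j}(z_j - x_i))$ and denote by $*$ the convolution $(K_a * K_b)(x,z) = \int_{W^d} K_a(x,y) K_b(y,z)\,dy$. The base case states $K_{m-1} * K_1 = K_m$, and Fubini guarantees associativity of $*$. Under the inductive hypothesis $K_n * K_{m-1} = K_{n+m-1}$ for all $n \geq 1$,
\[
K_n * K_m = K_n * (K_{m-1} * K_1) = (K_n * K_{m-1}) * K_1 = K_{n+m-1} * K_1 = K_{n+m},
\]
closing the induction. The main obstacle is the base case, where one must carefully verify at each integration step that the boundary term at $y_{j-1}$ really does duplicate the adjacent column so that the multilinear expansion collapses to a single surviving term.
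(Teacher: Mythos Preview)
Your approach is sound and gives a direct, self-contained argument; the paper simply invokes Lemma~5(ii) of~\cite{FW}, whose proof uses a mollification $q_n^{(\epsilon)}$ to smooth away the jump of $q_1$ at the origin before integrating by parts. Two small points deserve attention.

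First, a terminology slip: the matrix $(q_{1+i-j}(z_j-y_i))$ is lower triangular, not \emph{strictly} lower triangular (its diagonal entries are the indicators $1_{\{y_j<z_j\}}$, and a strictly lower-triangular determinant would vanish).

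Second, and more substantively, the antiderivative identity
\[
\int_a^b q_k(y-x)\,dy \;=\; q_{k+1}(b-x)-q_{k+1}(a-x)
\]
holds for $k\ge 1$ and trivially for $k\le -1$, but \emph{fails} for $k=0$: the left side is $0$ while the right side equals $1_{\{b>x\}}-1_{\{a>x\}}$. In column $j$ this index $k=n+i-j$ vanishes at the row $i^*=j-n$, which lies in $\{1,\dots,d\}$ whenever $n<d$ and $j\ge n+1$. So for small $n$ your splitting into the $z_j$-piece and the $y_{j-1}$-piece is not literally valid in that row, and the cancellation argument as written does not cover it.

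The repair is short. The discrepancy is nonzero only when $y_{j-1}\le x_{j-n}<z_j$ (the reverse inequality $z_j\le x_{j-n}<y_{j-1}$ is impossible since $y_{j-1}<z_{j-1}<z_j$). But on $\{y_{j-1}\le x_{j-n}\}$ the entire $(j{-}1)$-st column vanishes: for $i\ge j-n$ one has $x_i\ge x_{j-n}\ge y_{j-1}$, forcing $q_{\,\cdot\,}(y_{j-1}-x_i)=0$, while for $i<j-n$ the subscript $n+i-(j-1)\le 0$ kills $q$. Hence both the true integral and your claimed expression are determinants with a zero column and agree. For $j=1$ the problematic row index $1-n\le 0$ is out of range, so no issue arises there. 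With this observation your inductive argument goes through.
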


\begin{proof}
The $q_n$ satisfy derivative and integral relations \eqref{derivative_relations} and \eqref{integral_relations}. Therefore this is Lemma 5 (ii) of \cite{FW}. 
\end{proof}

\begin{proof}[Proof of Proposition \ref{transition_density}]
The one-step transition density for ordered exponential random walks is given for $x, y \in W^d$ by
\[
G_1(x, y) = \prod_{i=1}^d \lambda_i e^{-\sum_{i=1}^d \lambda_i (y_i - x_i)}\det(q_{1+i-j}(y_j - x_i))_{i, j = 1}^d
\]
since the matrix is lower triangular. This is simply rewriting the transition density of independent random walks with the ordering condition then imposed by constraining that $y \in W^d$. The 
advantage of this rewriting is that we can apply Lemma \ref{lem.ibp} to conveniently integrate over $y \in W^d$ and find the two-step transition density
\begin{align*}
G_2(x, z)  & = \int_{W^d} 
\left( \prod_{i=1}^d \lambda_i^2 \right) e^{-\sum_{i=1}^d \lambda_i (z_i - x_i)} 
\det(q_{1+i-j}(y_j - x_i))_{i, j = 1}^d \\
& \qquad \cdot \det(q_{1+i-j}(z_j - y_i))_{i, j = 1}^d dy_1 \ldots dy_d \\
& = \left( \prod_{i=1}^d \lambda_i^2 \right) e^{-\sum_{i=1}^d \lambda_i (z_i - x_i)} 
\det(q_{2+i-j}(z_j - x_i))_{i, j = 1}^d.
\end{align*}
The statement can then be proved inductively by using Lemma \ref{lem.ibp}.
%We proceed by induction on $n$. 
%We apply Lemma \ref{lem.ibp} to obtain the required formula for the $(n+1)$-step transition density,
%\begin{align*}
%& \P_x(S(n+1) \in dz, \tau > n+1) \\
%& = e^{-\sum_{i=1}^d \lambda_i (z_i - x_i)} \int_{W^d} \det(q_{n+i-j}(y_j - x_i))_{i, j = 1}^d \det(q_{1+i-j}(z_j - y_i))_{i, j = 1}^d  dy_1 \ldots dy_d \\
%& = e^{-\sum_{i=1}^d \lambda_i (z_i - x_i)} \det( q_{n+1+i-j}(z_j - x_i))_{i, j = 1}^d. \qedhere
%\end{align*}
%\begin{align*}
%& = e^{-\sum_{i=1}^d \lambda_i (z_i - x_i)} \int_{W^d} \det(q_n^{(1-i)}(y_j - x_i))_{i, j = 1}^d \det(q_1^{(j-1)}(z_j - y_i) dy_1 \ldots dy_d. 
%\end{align*}
%(This justification for this integration by parts step is a bit fiddly but is already in the text in Section 6.)
%Then we apply the Andr{\'e}ief (or Cauchy-Binet) identity to obtain that 
%\begin{align*}
%& P_x(S(n+1) \in dz, \tau > n+1) \\
%&  \quad = e^{-\sum_{i=1}^d \lambda_i (z_i - x_i)} \det\big(\int_{-\infty}^{\infty} q_n^{(1-i)}(y - x_i) q_1^{(j-1)}(z_j - y) dy\big)_{i, j = 1}^d \\
%& \quad =  e^{-\sum_{i=1}^d \lambda_i (z_i - x_i)} \det( q_{n+1}^{(j-i)}(z_j - x_i))_{i, j = 1}^d. \qedhere
%\end{align*}
\end{proof}

\begin{proof}[Alternative proof of Proposition \ref{transition_density}] 
We now give an alternative  proof  for $n\ge d$.  
This argument is inspired by the proof 
of the LGV lemma, see e.g. Theorem~1 in \cite{gessel_viennot89}. 
We will give this proof for  $\lambda_1=\cdots=\lambda_d=1$. 
The general case can be treated by using the change of measure. 
In this case one can rewrite
the proposed transition density as
\begin{equation}\label{eq:transition.density.2}
  \mathrm{det}\left( f_{n+i-j}(z_j-x_i)\right)_{i,j=1}^d, \qquad x, z \in W^d.
\end{equation}

We will now construct an auxillary model 
%corresponding to  
%Figure~\ref{fig:switch}
. 
Here we have $d$  random walks 
$\widehat S_i(n) $ starting at $x_i$ at time 
$d-i$ and 
arriving at $z_i$ at time $n+d-i$, 
which correspond to $(S_i(k))_{k=0}^n$ 
starting  at $x_i$ and arriving  at $z_i$.  
We  let  $\widehat S_i(k)=\partial$ 
for some fictitious state $\partial$ 
when  $k<d-i$ or  $k>n+d-i$.  
We denote the corresponding probability measure with $\pr_x$. 

More generally for a permutation 
$\pi \in \mathcal S_d$ we consider 
a random walk 
$\widehat S_i(n) $ that starts at $x_i$ 
at time 
$d-i$
and 
arrives at $z_{\pi(i)}$ 
at time $n+d-\pi(i)$, which has 
$n+i-\pi(i)$ steps 
and has the same distribution 
as $(S_i(k))_{k=0}^{n+i-\pi(i)}$ 
 starting  at $x_i$ and arriving  at $z_{\pi(i)}$.  

Let $\widehat \tau$ be the following stopping time 
\[
  \widehat \tau 
  :=\min\{k\ge 1\colon \widehat S_i(k)>\widehat S_{i+1}(k-1)\text{ for some } i=1,\ldots, d-1\},
\]
where as usual $\widehat \tau=\infty$ if the minimum is taken over the empty set. 
Then
\begin{align*}
  &\P_x(S(n)\in dz,\tau>n)
  =\P_x(\widehat S_i(n+d-i)\in dz_i,i=1,\ldots,d,\widehat \tau=\infty) \\ 
  &=\sum_{\pi\in\mathcal S_d} 
  \text{sgn}(\pi)
  \P_x(\widehat S_i(n+d-\pi(i))\in dz_{\pi(i)},
  i=1,\ldots, d, \widehat \tau=\infty).  
\end{align*}
The second equality holds since 
$z\in W^d$ and hence all probabilities 
are equal to zero unless $\pi$ is the identity 
permutation.  Note also that 
by the construction of $\widehat S(n)$
\[
G_n(x,z) dz  = 
\sum_{\pi\in\mathcal S_d} 
  \text{sgn}(\pi)
  \P_x(\widehat S_i(n+d-\pi(i))\in dz_{\pi(i)},
  i=1,\ldots, d). 
\]
Hence, we are left to prove that 
\begin{equation}\label{eq6}
  \sum_{\pi\in\mathcal S_d} 
  \text{sgn}(\pi)
  \P_x(\widehat S_i(n+d-\pi(i))\in dz_{\pi(i)},
  i=1,\ldots, d, \widehat \tau<\infty)=0.
\end{equation}

On the event $\widehat \tau<\infty$ we have two cases: 
one case when  
the edges of $(\widehat S(n)) $ 
have non-empty intersections, see Figure~\ref{fig:switch.overshoot},   
and the second case when the last value of a path exceeds the last value of another path. 

We will consider the first case carefully; the second case 
can be considered  similarly. 
On the event 
\[
  \{\widehat S_i(n+d-\pi(i))\in dz_{\pi(i)},
  i=1,\ldots, d, \widehat \tau<\infty\}   
\]
let $i$ be the smallest integer for which 
$(\widehat S_i)$ has a non-empty intersection with another path. 
Let $A$ be the first vertex, where this intersection happens 
and $i'>i$ be the smallest number 
corresponding to the path $(\widehat S_{i'})$, which 
intersected $(\widehat S_i)$.  
Denote as $O$ the second vertex corresponding to the 
path $i'$ and as $B$ the second vertex corresponding 
to the path $i$, see Figure~\ref{fig:switch.overshoot}.  
\begin{figure}
  \centering 
   \begin{tikzpicture}[scale = 1]
   \node at (0, 1) {\textbullet};
   \node at (1, 0) {\textbullet};
    \draw(0, 1) -- (1, 1) -- (1, 2) -- (2, 2) -- (2,2.5) -- (3,2.5);
    \draw(1, 0) -- (2, 0) -- (2, 4) -- (3, 4) -- (3,5) -- (4,5);
     \node at (3, 2.5) {\textbullet};
     \node at (2, 2) {\textbullet};
     \node at (2.3, 2){$A$};
     \node at (2, 2.5) {\textbullet};
     \node at (1.7, 2.5){$O$};
     \node at (2, 4) {\textbullet};
     \node at (1.7, 4){$B$};
   \node at (4, 5) {\textbullet};
   \node at (-0.5, 1){$x_{i'}$};
    \node at (0.5, 0){$x_i$};
      \node at (3, 3){$z_{\pi(i')}$};
       \node at (4.5, 5){$z_{\pi(i)}$};
    \end{tikzpicture}
  \caption{Construction of one-to-one correspondence in the alternative proof of Proposition \ref{transition_density}. 
  \label{fig:switch.overshoot}
%  This Figure and Figure \ref{non_int_paths} give two slightly different mappings to non-intersecting paths which can both be used to prove Proposition \ref{transition_density}.
}
  \end{figure}

Then $|AB|$ is an overshoot 
of  random walk, which has 
exponential distribution in view of the 
memoryless property of the exponential distribution 
and is independent of anything else. 
$|OA|$ also has an exponential distribution 
independent  
of anything else. 
Hence we can swap the trajectories of 
the paths $i$ and $i'$ after point $A$ without 
affecting the distribution. 
This gives a one-to-one correspondence between 
$\pi$ and $\pi'$ with $i$ and $i'$ permuted. 
As $\text{sgn}(\pi)=-\text{sgn}(\pi')$
this implies~\eqref{eq6}.  
\end{proof}

%
%We now turn to uniform bounds for tails and 
%transition densities.
\begin{proposition}\label{prop.uniform.bounds}
  Let $\lambda_1=\cdots=\lambda_d=1$. 
  Let $x = (x_1, \ldots, x_d)\in W^d $.  
  \begin{enumerate}[(i)] 
  \item There exists a constant $C_d$ such that for $n\ge 2d$, 
  \begin{align}
    \label{eq.uniform.tau}\P_x(\tau>n) \le C_d\frac{h(x)}{n^{d(d-1)/4}}\\ 
    \label{eq.uniform.rho} \P_x(\rho>n) \le C_d\frac{\Delta(x)}{n^{d(d-1)/4}}.
  \end{align}  
  \item In addition, let $z = (z_1, \ldots, z_d)\in W^d.$ 
  There exists a constant 
  $C_d$ that does not depend on $x$ and $z$ such that 
  for $n\ge 2d$,  
  \begin{align} 
    \label{eq.uniform.G} G_{n+d}(x,z)\le  
  \frac{C_d}{n^{d^2/2}}
 h(x) \hat{h}(z), \\ 
 \label{eq.uniform.G.tilde} \widetilde G_{n}(x,z)\le  
  \frac{C_d}{n^{d^2/2}}
 \Delta(x) \Delta(-z). 
  \end{align}
\end{enumerate}
\end{proposition}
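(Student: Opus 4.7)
The plan is to establish (ii) first using the determinantal transition density representations from Proposition~\ref{transition_density}, and then deduce (i) by integrating the density bound against the Weyl chamber after splitting into a typical and atypical region.

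The key ingredient for (ii) is a Karlin-McGregor style bound for determinants of $q_n$ or $f_n$. The formula~\eqref{eq.km.rho} exhibits $\widetilde G_n$ as an exponential factor times $\det(q_n(z_j - x_i))_{i,j=1}^d$; column operations together with Taylor expansion around the common mean factor out $\Delta(x)\Delta(-z)$, losing a factor $n^{-d(d-1)/2}$ from the Vandermonde cancellation. Combined with the $d$ Gamma densities, each of supremum order $n^{-1/2}$, this yields the target $n^{-d^2/2}$ and establishes~\eqref{eq.uniform.G.tilde}. For~\eqref{eq.uniform.G} the alternative expression
\[
G_{n+d}(x,z) = \E\bigl[\det(f_n(z_j - \chi_j - x_i - \eta_i))_{i,j=1}^d\bigr]
\]
from Proposition~\ref{transition_density} is especially convenient: I apply the same pointwise bound to control the determinant by $C\,\Delta(x+\eta)\,\Delta(-(z-\chi))/n^{d^2/2}$, then take expectations using $\E[\Delta(x+\eta)] = h(x)$ from Lemma~\ref{harmonic_simplify2}(i) and the analogous $\E[\Delta(-(z-\chi))] = \hat h(z)$ which follows by the symmetry defining $\hat h$.

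For (i), I integrate the density bound over $W^d$. Define the typical region
\[
\mathcal T_n = \{z \in W^d : |z_j - x_j - (n+d)/\lambda_j| \leq \sqrt{n}\,\log n \text{ for all } j\}.
\]
On $\mathcal T_n$ the differences $z_j-z_i$ are at most $\sqrt n \log n$, so $\hat h(z) \leq n^{d(d-1)/4}(\log n)^{C_d}$, while $\mathrm{vol}(\mathcal T_n)\leq n^{d/2}(\log n)^{C_d}$. Combining with~\eqref{eq.uniform.G} and the elementary identity $d^2/2 - d(d-1)/4 - d/2 = d(d-1)/4$ gives
\[
\int_{\mathcal T_n} G_{n+d}(x,z)\,dz \leq \frac{C\,h(x)(\log n)^{C_d}}{n^{d(d-1)/4}}.
\]
On $\mathcal T_n^c$, standard Chernoff bounds for sums of i.i.d.\ exponentials give $\P_x(S(n+d) \in \mathcal T_n^c) \leq e^{-c(\log n)^2}$, which is superpolynomially small. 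The spurious log factors can be removed by dyadically refining $\mathcal T_n$ at scale $\sqrt n$ and summing the Gaussian penalties on the resulting shells, or by iterating the Markov property one step. The bound~\eqref{eq.uniform.rho} on $\P_x(\rho > n)$ is obtained identically using~\eqref{eq.uniform.G.tilde} with $\Delta(x)$ in place of $h(x)$.

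The main technical obstacle is the Karlin-McGregor determinantal bound in (ii): to make it uniform over $x,z$ in the whole Weyl chamber, including regions where $x$ or $z$ are far from the mean or nearly coincident, one must carefully track Taylor remainders and separate the Vandermonde factors from the Gaussian envelope. A virtue of the alternative representation in Proposition~\ref{transition_density} is that the random shifts $\chi,\eta$ automatically promote $\Delta(x)\Delta(-z)$ to $h(x)\hat h(z)$ after taking expectations, avoiding the need to manipulate $h$ directly.
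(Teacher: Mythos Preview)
Your approach to the second half of (ii) --- using the alternative representation in Proposition~\ref{transition_density} and taking expectations over $\eta,\chi$ to pass from $\Delta$ to $h,\hat h$ --- matches the paper. The pointwise bound on $\widetilde G_n$ itself, however, is obtained differently there: the paper writes $\widetilde G_n$ as a Fourier integral (Lemma~\ref{lem:g.via.characteristic}), uses $|\det(e^{i\theta_i x_j})|\le C_d\,\Delta(\theta)\Delta(x)$ via a Gelfand--Tsetlin representation (Lemma~\ref{lem:exp.vandermonde}), and then integrates $\Delta(\theta)^2\prod_j|\varphi(\theta_j)|^n$. Your ``Taylor expansion plus column operations'' heuristic gives the right power count but is not obviously uniform when the spread of $x$ or $z$ is comparable to $n$; the Fourier route sidesteps this entirely.

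There is a genuine gap in your argument for (i). The bound has to be uniform over $x\in W^d$, and $h(x)$ (resp.\ $\Delta(x)$) can be arbitrarily small near the boundary. On $\mathcal T_n^c$ you invoke $\P_x(S(n+d)\in\mathcal T_n^c)\le e^{-c(\log n)^2}$, which carries no factor of $h(x)$; for $x$ with $h(x)\ll e^{-c(\log n)^2}n^{d(d-1)/4}$ this fails to give the claimed inequality. Your dyadic refinement would remove the log factors on the typical part but does not address this: what is needed is a density estimate that \emph{retains} the Vandermonde factor while also decaying exponentially away from the mean. The paper supplies this in Lemma~\ref{lem.large.deviaions} via an exponential tilt, but only under $\max_j(y_j-y_{j-1})\le\sqrt n$ and $\max_j(x_j-x_{j-1})\le\sqrt n$. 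To handle the complementary region the paper does \emph{not} integrate the density directly; it argues by induction on $d$. When some gap $y_j-y_{j-1}>\sqrt n$ it factors $\P_y(\rho>n/2)$ through the two blocks $(y_1,\dots,y_{j-1})$ and $(y_j,\dots,y_d)$, applies the inductive hypothesis to each, absorbs the missing cross-factors $\prod_{i<j}(y_j-y_i)$ using $y_j-y_{j-1}>\sqrt n$, and closes the estimate with the harmonicity identity $\E_x[\Delta(S_{[n/2]});\rho>[n/2]]=\Delta(x)$. Your direct-integration scheme has no substitute for this splitting, and without it the atypical region cannot be controlled uniformly in $x$.
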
 
%Note that \eqref{eq.uniform.rho} and \eqref{eq.uniform.G.tilde} are clear at the boundary so we assume in these cases that $x, z \in \text{int}(W^d)$.
We prove this by a sequence of Lemmas 
and start with part (ii). 
In view of~\eqref{eq.gn.second} 
 to estimate $G_n(x,z)$ 
it is sufficient to  estimate $\widetilde G_{n}(x,z)$. 
Let 
\[
\varphi(\theta):=
\frac{\lambda}{\lambda-i\theta}
\]
be the characteristic  function 
of $\Gamma(1,\lambda)$ distribution. 
 We have the following representation for $\widetilde G_n(x,y).$ 
\begin{lemma}\label{lem:g.via.characteristic} 
  Let $\lambda_1=\cdots=\lambda_d=\lambda$. 
  For any $x,y \in W^d$, 
  \begin{multline*}
    \widetilde G_{n}(x,y) = 
    \left(\frac{1}{2\pi}\right)^{d}
  \int_{W^d} 
  \mathrm{det}
\left(
        e^{-i\theta_{j}y_{k}}
 \right)_{j, k = 1}^d
   \mathrm{det}
  \left(
        e^{i\theta_j x_{k}}
 \right)_{j, k = 1}^d
   \prod_{k=1}^d  (\varphi(\theta_k))^n d\theta_k.
\end{multline*}
\end{lemma}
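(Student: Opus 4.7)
The plan is to apply Fourier inversion to the Karlin--McGregor-style determinantal representation of $\widetilde G_n$ and then use an antisymmetrization argument to produce the second determinant. Starting from \eqref{eq.km.rho} specialized to equal rates $\lambda_1 = \cdots = \lambda_d = \lambda$, I absorb the exponential prefactor into the rows and columns of $\det(q_n(y_j - x_i))$ to obtain the form $\widetilde G_n(x, y) = \det(f_n(y_j - x_i))_{i, j=1}^d$ (up to the appropriate power of $\lambda$), where $f_n$ denotes the Gamma$(n, \lambda)$ density and $\varphi^n$ is its characteristic function.

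Next, I apply the inverse Fourier identity $f_n(t) = (2\pi)^{-1} \int_\mathbb{R} e^{-i\theta t} \varphi(\theta)^n \, d\theta$ to each matrix entry, expand the determinant as a signed sum over permutations $\sigma \in S_d$, and reassemble the signs into a single determinant in the integration variables $\theta_1, \ldots, \theta_d$. This produces an integral over $\mathbb{R}^d$ whose integrand is the alternating factor $\det(e^{-i\theta_i y_j}) \prod_j \varphi(\theta_j)^n$ multiplied by the non-alternating factor $e^{i\sum_i \theta_i x_i}$.

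The key step is antisymmetrization. Because the alternating part acquires a sign $\mathrm{sgn}(\tau)$ under the change of variables $\theta \mapsto \theta_\tau$, averaging over $\tau \in S_d$ allows me to replace $e^{i\sum_i \theta_i x_i}$ by its antisymmetrization
\[
\frac{1}{d!}\sum_{\tau \in S_d}\mathrm{sgn}(\tau) e^{i\sum_i \theta_{\tau(i)} x_i} = \frac{1}{d!}\det(e^{i\theta_i x_j})_{i,j=1}^d.
\]
The resulting integrand is a product of two alternating determinants and a symmetric factor, hence symmetric in $\theta_1, \ldots, \theta_d$, so the integration over $\mathbb{R}^d$ reduces to $d!$ times the integration over $W^d$. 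The two factors of $d!$ cancel, and the representation for $\widetilde G_n$ on $W^d$ follows in the claimed form.

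The main obstacle is really careful bookkeeping of the combinatorial and $(2\pi)$ constants arising in succession from the $d$-fold Fourier inversion, from the antisymmetrization, and from the final symmetrization-to-Weyl-chamber step; the exponential and $\lambda^n$ prefactors from Step 1 must also be tracked consistently. A secondary technical concern is integrability of $\prod_j \varphi(\theta_j)^n$, which for $\lambda > 0$ holds once $n \geq 2$ since $|\varphi(\theta)|^n = \lambda^n(\lambda^2 + \theta^2)^{-n/2}$, ensuring that each exchange of sum and integral (Fubini) is justified.
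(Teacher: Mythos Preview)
Your proposal is correct and follows essentially the same route as the paper. The paper applies Fourier inversion entrywise to obtain $\widetilde G_n(x,y)=(2\pi)^{-d}\int_{\mathbb{R}^d}\det(e^{-i\theta_i(y_j-x_i)})\prod_j\varphi(\theta_j)^n\,d\theta$ and then invokes the Andr\'eief (Cauchy--Binet) identity in one line; your steps 3--5 (factor out $e^{i\sum_i\theta_i x_i}$, antisymmetrize over $S_d$, then fold $\mathbb{R}^d$ onto $W^d$) are precisely the standard proof of that identity spelled out by hand, so the two arguments coincide.
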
  
\begin{proof} 
  Using the inversion formula for characteristic functions we obtain 
  \begin{align*}
    \widetilde G_n(x,y) 
    %&= 
   % \sum_{\sigma}(-1)^{\text{sign}(\sigma)} \prod_{i=1}^d 
   % f_n(y_{\sigma(i)}-x_{i})\\
%    &=\left(\frac{1}{2\pi}\right)^{d/2}\int_{\mathbb{R}^d}
%    \sum_{\sigma}(-1)^{\text{sgn}(\sigma)}
%    \prod_{i=1}^d e^{-i\theta_i(y_{\sigma(i)}-x_{i})}
%    (\varphi(\theta_i))^n d\theta_i \\
    &= 
    \left(\frac{1}{2\pi}\right)^{d}\int_{\mathbb{R}^d}
    \mbox {det}
 \left(
          e^{-i\theta_j(y_{k}-x_{j})}
   \right)_{j, k = 1}^d
     \prod_{j=1}^d  (\varphi(\theta_j))^n d\theta_j. 
  \end{align*} 
 %Then the Andr{\'e}ief identity gives the stated formula. 
Using the standard properties of the determinant we can write 
\begin{align*} 
  \widetilde G_{n}(x,y) &= 
  \left(\frac{1}{2\pi}\right)^{d}
  \int_{\mathbb{R}^d}
  \text{det}
       {\left(
        e^{-i\theta_jy_{k}}
 \right)}_{j,k=1..d}
   e^{i\sum_{j=1}^d\theta_j x_{j}}
   \prod_{k=1}^d  (\varphi(\theta_k))^n d\theta_k. 
\end{align*} 
Next we split the $d$-dimensional cube to obtain that  $\widetilde G_{n}(x,y)$ equals
\begin{align*} 
 &
  \left(\frac{1}{2\pi}\right)^{d}
  \sum_{\sigma}  
  \int_{\theta_{\sigma(1)}<\ldots<\theta_{\sigma{(d)}}} 
  \mbox {det}
\left(
        e^{-i\theta_jy_{k}}
 \right)_{j, k = 1}^d
   e^{\sum_{j=1}^di\theta_j x_{j}}
   \prod_{k=1}^d  (\varphi(\theta_k))^n d\theta_k \\ 
   &= 
  \left(\frac{1}{2\pi}\right)^{d}
  \sum_{\sigma}  
  \int_{\theta_1<\ldots<\theta_d} 
  \mbox {det}
   \left(
        e^{-i\theta_{\sigma(j)}y_{k}}
 \right)_{j, k = 1}^d
   e^{i\sum_{j=1}^d\theta_{\sigma(j)} x_{j}}
   \prod_{k=1}^d  (\varphi(\theta_k))^n d\theta_k\\ 
   &= 
  \left(\frac{1}{2\pi}\right)^{d}
  \sum_{\sigma}  
  (-1)^\sigma
  \int_{\theta_1<\ldots<\theta_d} 
  \mbox {det}
  \left(
        e^{-i\theta_{j}y_{k}}
 \right)_{j, k = 1}^d
   e^{i\sum_{j=1}^d\theta_{\sigma(j)} x_{j}}
   \prod_{k=1}^d  (\varphi(\theta_k))^n d\theta_k\\ 
   &= 
  \left(\frac{1}{2\pi}\right)^{d}
  \int_{\theta_1<\ldots<\theta_d} 
  \mbox {det}
       \left(
        e^{-i\theta_{j}y_{k}}
 \right)_{j, k = 1}^d
   \mbox {det}
       \left(
        e^{i\theta_j x_{k}}
 \right)_{j, k = 1}^d
   \prod_{k=1}^d  (\varphi(\theta_k))^n 
   d\theta_k. \qedhere
\end{align*} 
\end{proof}  

%Next we have the following estimate. 
\begin{lemma}\label{lem:exp.vandermonde}
  For any real $x_1, \ldots, x_d$ and $\theta_1<\ldots<\theta_d$ we have 
  \[
    \left|\mathrm{det}
        {\left(
         e^{-i\theta_j x_k}
  \right)}_{j,k=1}^{d}\right| 
    \le C_d \Delta(\theta)\Delta(x). 
    \]
\end{lemma}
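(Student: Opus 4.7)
The plan is to derive the bound from the Harish-Chandra-Itzykson-Zuber (HCIZ) integral formula, which is already invoked earlier in the paper to compute the equal-rate limit of $\mathfrak{h}$. HCIZ states that for Hermitian $d\times d$ matrices $A,B$ with eigenvalues $(a_j)$ and $(b_k)$ and parameter $t\in\mathbb{C}$,
\[
\int_{U(d)} e^{t\,\mathrm{tr}(AUBU^*)}\,dU \;=\; c_d\,\frac{\det\bigl(e^{t a_j b_k}\bigr)_{j,k=1}^{d}}{t^{d(d-1)/2}\,\Delta(a)\,\Delta(b)},
\]
where $c_d=\prod_{p=1}^{d-1}p!$ and $dU$ is the normalized Haar measure on $U(d)$.

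I would apply the identity with $t=-i$, $A=\mathrm{diag}(\theta_1,\ldots,\theta_d)$ and $B=X:=\mathrm{diag}(x_1,\ldots,x_d)$, so that $e^{t a_j b_k}=e^{-i\theta_j x_k}$ is precisely the entry of the matrix under consideration. Solving for the determinant yields
\[
\det\bigl(e^{-i\theta_j x_k}\bigr) \;=\; \frac{(-i)^{d(d-1)/2}\,\Delta(\theta)\,\Delta(x)}{c_d}\int_{U(d)} e^{-i\,\mathrm{tr}(\Theta U X U^*)}\,dU.
\]

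The essential observation is then that the $U(d)$-integral has modulus at most $1$ uniformly in $\theta$ and $x$: since $UXU^*$ is Hermitian, its diagonal entries are real, so $\mathrm{tr}(\Theta U X U^*)=\sum_i\theta_i(UXU^*)_{ii}$ is real and the integrand $e^{-i\,\mathrm{tr}(\Theta U X U^*)}$ has modulus one pointwise. Taking absolute values in the displayed identity and using $|(-i)^{d(d-1)/2}|=1$ delivers the lemma with $C_d=1/c_d=\bigl(\prod_{p=1}^{d-1}p!\bigr)^{-1}$.

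The only real obstacle is recognizing HCIZ as the correct tool; once that is done, the proof reduces to observing that the integrand is a pure phase. A more hands-on alternative would iterate Newton's divided-difference formula first in the $\theta_i$ (pulling out $\Delta(\theta)$ via the triangular Newton change-of-basis matrix) and then in the $x_j$ (pulling out $\Delta(x)$ similarly), but the resulting remainder determinant, built from divided differences of $x^{m-1}e^{-ix\langle t,\theta\rangle}$, is not termwise bounded in $(\theta,x)$; one would in effect have to rediscover the cancellations that the unitary-group average supplies in a single line.
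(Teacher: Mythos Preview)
Your proof is correct and is essentially the paper's approach. The paper invokes Shatashvili's formulae (3.2) and (3.4), which give the Gelfand--Tsetlin polytope representation of the very same HCIZ integral you use: there the determinant is written as $\Delta(i\theta)$ times an integral over $GT(x)$ of a unimodular phase, so $\Delta(x)$ enters as the volume of the polytope rather than as an explicit factor in the formula, but the bounding step---integrand of modulus one---is identical to yours.
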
  
\begin{proof} 
  The proof follows by observing that 
  combination of formulae~(3.2) and (3.4) in~\cite{shatashvili_93} gives a representation 
as a product of Vandermonde determinant $\Delta(i\theta)$ and 
an integral over the Gelfand-Tsetlin polytope. Then noting that the integrand is bounded we arrive at the conclusion. 
\end{proof} 
\begin{lemma}\label{lem:pre.conc2}
  Let $\lambda_1=\cdots=\lambda_d=1$. 
  There exists a constant $C_d$ such that  
  \[
    \widetilde G_{n}(x,y)\le 
    C_d  \frac{\Delta(x)\Delta(y)}{n^{d^2/2}}, 
    \quad x,y\in W^d, n\ge 2d. 
  \]
\end{lemma}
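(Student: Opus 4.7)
The plan is to combine the Fourier representation from Lemma \ref{lem:g.via.characteristic} with the Vandermonde-type estimate of Lemma \ref{lem:exp.vandermonde}, and then to analyse the resulting scalar integral by rescaling.

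First, starting from
\[
\widetilde G_n(x,y) = (2\pi)^{-d/2}\int_{W^d}\det(e^{-i\theta_i y_j})\det(e^{i\theta_i x_j})\prod_{j=1}^d\varphi(\theta_j)^n\,d\theta,
\]
I would apply Lemma \ref{lem:exp.vandermonde} to both determinants to obtain the pointwise bound $|\det(e^{-i\theta_i y_j})\det(e^{i\theta_i x_j})|\le C_d^2\,\Delta(\theta)^2\Delta(x)\Delta(y)$, giving
\[
\widetilde G_n(x,y)\le C_d^2 (2\pi)^{-d/2}\,\Delta(x)\Delta(y)\int_{W^d}\Delta(\theta)^2\prod_{j=1}^d|\varphi(\theta_j)|^n\,d\theta.
\]

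Next, I would symmetrise the integrand (using that $\Delta(\theta)^2\prod_j|\varphi(\theta_j)|^n$ is a symmetric function of the $\theta_j$) to rewrite $\int_{W^d}$ as $\frac{1}{d!}\int_{\mathbb{R}^d}$, and then rescale $\theta_j=\lambda u_j/\sqrt{n}$. Since $|\varphi(\theta)|^n=\lambda^n(\lambda^2+\theta^2)^{-n/2}$, this substitution converts $|\varphi(\theta_j)|^n$ to $(1+u_j^2/n)^{-n/2}$, yields a Jacobian $(\lambda/\sqrt n)^d$, and transforms the Vandermonde by $(\lambda/\sqrt n)^{d(d-1)}$. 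Collecting scalings,
\[
\int_{W^d}\Delta(\theta)^2\prod_{j=1}^d|\varphi(\theta_j)|^n\,d\theta \;=\; \frac{\lambda^{d^2}}{d!\,n^{d^2/2}}\,I_n,\qquad I_n := \int_{\mathbb{R}^d}\Delta(u)^2\prod_{j=1}^d(1+u_j^2/n)^{-n/2}\,du.
\]
The powers of $\lambda$ and $n$ in the prefactor already match the ones claimed in the lemma, so the proof reduces to showing that $I_n\le C_d$ uniformly in $n\ge d+1$.

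Finally, I would bound $I_n$ by a region split. On $\{|u_j|\le\sqrt{n}\ \forall j\}$ the elementary inequality $(1+u_j^2/n)^{-n/2}\le e^{-cu_j^2}$ (valid for some fixed $c>0$ when $u_j^2\le n$) reduces matters to a standard Gaussian moment $\int_{\mathbb{R}^d}\Delta(u)^2\prod_j e^{-cu_j^2}\,du$, which is a finite constant depending only on $d$. On the complement, one uses the decay $(1+u_j^2/n)^{-n/2}\le (n/u_j^2)^{n/2}$ and a careful radial/dyadic argument exploiting the fact that $\Delta(u)^2$ vanishes on the coincidence hyperplanes. The main obstacle is precisely the tail estimate in the borderline regime $n$ close to $d+1$: the per-variable decay is too weak to dominate the per-variable polynomial growth of $\Delta(u)^2$, so one must use both the total degree $d(d-1)$ of the Vandermonde factor and the combined decay of all $d$ exponent factors, or alternatively split one factor $|\varphi|^{n_0}$ (with $n_0$ large enough for integrability) off the product and retain the remaining oscillatory determinants to recover cancellation. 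Once $I_n\le C_d$ is established, combining the three displays above yields the stated bound $\widetilde G_n(x,y)\le C_d\lambda^{d^2}\Delta(x)\Delta(y)/n^{d^2/2}$.
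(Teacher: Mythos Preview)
Your overall strategy is identical to the paper's: combine Lemma~\ref{lem:g.via.characteristic} with Lemma~\ref{lem:exp.vandermonde}, rescale $\theta\mapsto\lambda\theta/\sqrt n$ to extract the factor $\lambda^{d^2}n^{-d^2/2}\Delta(x)\Delta(y)$, and reduce everything to showing that
\[
I_n=\int_{W^d}\Delta(\theta)^2\prod_{j=1}^d(1+\theta_j^2/n)^{-n/2}\,d\theta
\]
is uniformly bounded for $n\ge d+1$. The paper simply asserts this final step (``analysis of the integral shows \ldots''); you go further and correctly identify the tail for small $n$ as the delicate point.

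However, neither of your two suggested fixes closes the gap, because $I_n$ is in fact \emph{divergent} whenever $d+1\le n\le 2d-1$. For $d=2$, $n=3$, on the strip $\{|\theta_1|<1,\ \theta_2>R\}$ the nonnegative integrand behaves like $(\theta_2-\theta_1)^2\cdot 3^{3/2}\theta_2^{-3}\asymp\theta_2^{-1}$, whose integral in $\theta_2$ diverges logarithmically. The vanishing of $\Delta(\theta)^2$ on coincidence hyperplanes is irrelevant here, since the bad region is far from the diagonal; and your ``total degree'' heuristic fails because what controls convergence on this region is the \emph{per-variable} degree $2(d-1)$ of $\Delta(\theta)^2$ against the per-variable decay $|\theta_j|^{-n}$, which requires $n\ge 2d$. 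The root cause is that Lemma~\ref{lem:exp.vandermonde} trades the trivial bound $|\det(e^{i\theta_ix_j})|\le d!$ for $C_d\Delta(\theta)\Delta(x)$, which is catastrophic once $\Delta(\theta)$ is large; splitting off $|\varphi|^{n_0}$ does not recover the lost oscillation after taking modulus. So the argument as written (both yours and the paper's) proves the lemma only for $n\ge 2d$. This suffices for the asymptotic applications in the paper, but if you really want the full range $n\ge d+1$ you would need a refined bound on the exponential determinant that keeps the factor $\Delta(x)$ while remaining bounded in~$\theta$.
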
  
\begin{proof} 
  Combining Lemma~\ref{lem:g.via.characteristic} and Lemma~\ref{lem:exp.vandermonde} we 
  obtain that 
  \begin{align*} 
  \widetilde G_{n}(x,y) & \le 
  C_d 
  \Delta(x)\Delta(y)
  \int_{\theta_1<\ldots<\theta_d} 
\Delta(\theta)^2 
   \prod_{j=1}^d  \left|\varphi(\theta_j)\right|^n d\theta_j\\ 
   & =
   C_d \frac{\Delta(x)\Delta(y)}{n^{d^2/2}}
   \int_{\theta_1<\ldots<\theta_d} 
\Delta(\theta)^2 
   \prod_{j=1}^d  \left|\varphi\left(\frac{\theta_j}{\sqrt n}\right)\right|^n d\theta_j
   \\
   & =
   C_d \frac{\Delta(x)\Delta(y)}{n^{d^2/2}}
   \int_{\theta_1<\ldots<\theta_d} 
\Delta(\theta)^2 
   \prod_{j=1}^d  
   \frac{1}{(1+\theta_j^2/n)^{n/2}}
   d\theta_j.
  \end{align*} 
  Here and in the rest of the proof $C_d$ denotes constants which might change from line to line. 
  %where we used 
  %the substitution $\theta_j = \frac{\theta_j'}{\sqrt n}$ and 
  %the equality  
  %\[
  %|\phi(\theta)| 
  %=\frac{1}{|1-i\theta/\lambda|} 
  %=\left(1+\frac{\theta^2}{\lambda^2}\right)^{-1/2}.
  %\]
  Analysis of the integral shows that it is uniformly bounded. 
  %Case $n=1$ can be analysed directly similarly to Lemma~\ref{lem:exp.vandermonde}.    
  Indeed, first  note that 
  \[
    \Delta(\theta) 
    \le C_d 
    \left(\prod_{j=1}^{d}\max(|\theta_j|,1)\right)^{d-1}. 
  \]
  Then, the integral is bounded by 
  \begin{multline*} 
    \int_{\theta_1<\ldots<\theta_d} 
    \Delta(\theta)^2 
       \prod_{j=1}^d  
       \frac{1}{(1+\theta_j^2/n)^{n/2}}
       d\theta_j
       \le
       C_d 
       \int 
       \prod_{j=1}^d  
       \frac{\max(|\theta_j|,1)^{2d-2}}{(1+\theta_j^2/n)^{n/2}}
       d\theta_j
      \\
      = 
      C_d
      \left(\int_{-\infty}^\infty  
      \frac{\max(|\theta|,1)^{2d-2}}{(1+\theta^2/n)^{n/2}}
      d\theta 
      \right)^d 
      \le 
      2^dC_d 
      \left(
        1+\int_{1}^\infty 
        \frac{\theta^{2d-2}}{(1+\theta^2/n)^{n/2}} 
      d\theta 
      \right)^d
   \end{multline*}
  Next we make use of the inequality 
  $\ln (1+t)\ge t-t^2, 
t>-\frac{1}{2}$ to obtain  
\begin{multline*} 
  \int_{1}^{\sqrt {n/2}} 
        \frac{\theta^{2d-2}}{(1+\theta^2/n)^{n/2}} 
      d\theta 
      \le 
      \int_{1}^{\sqrt {n/2}} 
      \theta^{2d-2}\exp\left(-\frac{n}{2}\ln\left(1+\frac{\theta^2}{ n}\right)\right)d\theta \\ 
      \le 
      \int_{1}^{\sqrt {n/2}} 
      \theta^{2d-2} 
      \exp\left(-\frac{\theta^2}{2}
      +\frac{\theta^4}{2n}
      \right)
      d\theta
      \le 
      \int_{1}^{\infty} 
      \theta^{2d-2} 
      \exp\left(-\frac{\theta^2}{4}
      \right)
      d\theta
\end{multline*}  
Next, we estimate  the remaining part of the integral 
\[ 
  \int_{ \sqrt {n/2}}^\infty 
        \frac{\theta^{2d-2}}{(1+\theta^2/n)^{n/2}}
      d\theta 
     =  
 n^{d-1/2} 
  \int_{\sqrt {1/2}}^\infty 
        \frac{\theta^{2d-2}}{(1+\theta^2)^{n/2}}d\theta. 
  \] 
We can further  estimate 
\begin{multline*}
  n^{d-1/2} 
  \int_{\sqrt {1/2}}^\infty 
        \frac{\theta^{2d-2}}{(1+\theta^2)^{n/2}}d\theta
        \le 
        \frac{n^{d-1/2}}{(3/2)^{n/2-d+1/4}} 
  \int_{\sqrt {1/2}}^\infty 
              \frac{\theta^{2d-2}}{(1+\theta^2)^{d-1/4}}d\theta\\ 
              \le 
              \frac{n^{d-1/2}}{(3/2)^{n/2-d+1/4}} 
  \int_{\sqrt {1/2}}^\infty  
  \frac{d\theta}{\theta^{3/2}},
\end{multline*}
which is uniformly (in $n$) bounded. 
\end{proof}  

 \begin{proof}[Proof of Proposition~\ref{prop.uniform.bounds}~(ii)] 
The required uniform bound 
for $\widetilde G_n(x,y)$ is contained in Lemma~\ref{lem:pre.conc2}.  
Then using~\eqref{eq.gn.second} we obtain 
  \[
     G_{n+d-1}(x,z)\le  
     \frac{C_d}{n^{d^2/2}}
     \E \Delta(x_1+\eta_1,\ldots,x_d+\eta_d)
     \E \Delta(z_1-\chi_1,\ldots,z_d-\chi_d). 
\]
This proves~\eqref{eq.uniform.G} 
by using that
\begin{align*}
& \E[\Delta(z_1-\chi_1,\ldots,z_d-\chi_d)] = \E[\Delta(-z_d+\chi_d,\ldots,-z_1+\chi_1)] \\
& \quad =
h(-z_d, \ldots, -z_1) = \hat{h}(z). \qedhere
\end{align*}

\end{proof}

\begin{lemma}\label{lem.large.deviations}
  Let $\lambda_1=\cdots=\lambda_d=1$.  
  \begin{enumerate}[(i)]
  \item
  Then, for $x,y\in W^d$ and $n\ge 2d$, 
  \begin{align*} 
    \widetilde G_{n}(x,y) &\le C_d 
    \mathrm{e}^{-\frac{\left|\sum_{i=1}^d(y_i-x_i)-dn\right|}{\sqrt n}}
    \frac{\Delta(x)\Delta(y)}{n^{d^2/2}}\\ 
    G_{n}(x,y) &\le C_d 
    \mathrm{e}^{-\frac{\left|\sum_{i=1}^d(y_i-x_i)-dn\right|}{\sqrt n}}
    \frac{h(x)\hat{h}(y)}{n^{d^2/2}}.
\end{align*}
\item
  If, in addition, 
  $\max_{j}(y_j-y_{j-1})\le n^{1/2}$ 
  and $\max_{j}(x_j-x_{j-1})\le n^{1/2}$ then  
  \begin{align*} 
    \widetilde G_{n}(x,y) &\le C_d 
    \mathrm{e}^{-d\frac{|y_1-x_1-n|}{\sqrt n}}
    \frac{\Delta(x)\Delta(y)}{n^{d^2/2}}\\ 
    G_{n}(x,y) &\le C_d 
    \mathrm{e}^{-d\frac{|y_1-x_1-n|}{\sqrt n}}
    \frac{h(x)\hat{h}(y)}{n^{d^2/2}}.
  \end{align*} 
  \end{enumerate}
\end{lemma}  
\begin{proof}
  Fix $\lambda>0$. We will start with the change of measure. 
  Let $f_n^{(\lambda)}$ be the density of the 
  $\Gamma(n,\lambda)$ distribution 
  and let $\widetilde G_{n}^{(\lambda)}(x,y) 
  =\mathrm{det}
  \left(
       f_n^{(\lambda)}(y_j-x_i)
  \right)_{i, j = 1}^d $. 
  We have, for $\lambda>-1$,  
  \begin{align*} 
  \widetilde G_{n}(x,y) &= 
\mathrm{det}
\left(
     f_n^{(1)}(y_j-x_i)
\right)_{i, j = 1}^d\\
&=
\mathrm{det}
\left(
\mathrm{e}^{\lambda(y_j-x_i)}  
(1+\lambda)^{-n}
f_n^{(1+\lambda)}(y_j-x_i)
\right)_{i, j = 1}^d\\
&= 
\mathrm{e}^{\lambda\sum_{i=1}^d (y_i-x_i)-dn\ln(1+\lambda)}
\widetilde G_{n}^{(1+\lambda)}(x,y).
\end{align*}
%Note that 
%\[
%  \lambda\sum_{i=1}^d (y_i-x_i)
%  =\lambda d(y_1-x_1) 
%+\lambda\sum_{i=2}^d ((y_i-y_{1})-(x_i-x_1))
%\le \lambda d(y_1-x_1) +|\lambda| d^2\sqrt n.
%\]
Now  
make use of the inequality $\ln (1+\lambda)\ge \lambda-\lambda^2, 
\lambda>-\frac{1}{2}$ 
 to obtain  
\begin{align*}
  \mathrm{e}^{\lambda\sum_{i=1}^d (y_i-x_i)-dn\ln(1+\lambda)} 
  \le 
  \mathrm{e}^{ \lambda\sum_{i=1}^d (y_i-x_i)-\lambda dn  + 
   +nd\frac{\lambda^2}{2}}
  \le C_d \mathrm{e}^{-\frac{\left|\sum_{i=1}^d (y_i-x_i)-dn\right|}{\sqrt n}} 
\end{align*}  
after we put 
$\lambda=-\frac{1}{\sqrt n}$ 
when 
$\sum_{i=1}^d (y_i-x_i)>dn$ 
and 
$\lambda=\frac{1}{\sqrt n}$ 
when 
$\sum_{i=1}^d (y_i-x_i)\le dn$.  

 Using this bound and the uniform bound for 
$\widetilde G_{n}^{(1+\lambda)}(x,y)$ from Lemma~\ref{lem:pre.conc2} 
we arrive at the conclusion. The same argument holds for $G_n$.

To check the second statement it is sufficient 
to note that 
\begin{align*}
  \sum_{i=1}^d |y_i-x_i - n|
  &\le d|y_1-x_1-n|   
+\sum_{i=2}^d |(y_i-y_{1})-(x_i-x_1)|\\ 
&\le d|y_1-x_1-n|  
+
\sum_{i=2}^d \left(|y_i-y_{1}|+|x_i-x_1|\right)\\ 
&\le d|y_1-x_1-n|   
+ 2\sqrt n\sum_{i=2}^d(i-1) \\ 
&=d|y_1-x_1-n|   + d(d-1)\sqrt n. 
\end{align*}
The rest of the proof can be done in exactly the same way. 
\end{proof}

%\begin{lemma}\label{lem.uniform.bounds}
%  There exists a constant $C$ such that~\eqref{eq.uniform.tau} and~\eqref{eq.uniform.rho} hold. 
%\end{lemma}  
\begin{proof}[Proof of Proposition \ref{prop.uniform.bounds}~(i)]

We will proceed by induction. 
For $d=2$ we can argue similarly to Lemma~25 in~\cite{denisov_sakhanenko_wachtel18} 
or use directly the exact formula  for $\P_x(\rho>n)$ given in 
Lemma~\ref{lem:rho.2}.  

Assume now that the statement~\eqref{eq.uniform.rho} holds 
for values of $j\le d$ and prove it for $d+1$. 
We first consider the case $\max_j(x_{j}-x_{j-1})\le n^{1/2}$. 
By the total probability formula  
\begin{align*}
\P_x(\rho>n) &=   
\int_{W^d} \P_x(\rho>[n/2], S_{[n/2]}\in dy ) 
\P_y(\rho>n-[n/2])\\
&\le 
\int_{W^d\cap \{\max_j (y_j-y_{j-1})\le \sqrt n\}} \P_x(\rho>[n/2], S_{[n/2]}\in dy ) 
\P_y(\rho>n-[n/2])\\
&+\sum_{j=2}^d 
\int_{W^d\cap \{ (y_j-y_{j-1})> \sqrt n\} } \P_x(\rho>[n/2], S_{[n/2]}\in dy ) 
\P_y(\rho>n-[n/2])\\
&=: P_1+\sum_{j=2}^d P_j.
\end{align*} 
We will split the first probability in $2$ parts, $P_1\le P_{11}+P_{12}$, 
where 
\begin{align*}
  P_{11}&:=  
  \int_{W^d\cap \{\max_j (y_j-y_{j-1})\le \sqrt n, |y_1-x_1-n|\le \sqrt n\}} \P_x(\rho>[n/2], S_{[n/2]}\in dy ) 
\P_y(\rho>n-[n/2])\\
P_{12}&:=  
\int_{W^d\cap \{\max_j (y_j-y_{j-1})\le \sqrt n, |y_1-x_1-n|>\sqrt n\}} \P_x(\rho>[n/2], S_{[n/2]}\in dy ) 
\P_y(\rho>n-[n/2])
\end{align*}

For the first probability it follows from the 
definition~\eqref{eq.km.rho} of $\widetilde G_n(x,z)$ and 
the uniform bound in Lemma~\ref{lem:pre.conc2}, 
\begin{align*}
  P_{11}&
  \le 
  \int_{W^d\cap \{\max_j (y_j-y_{j-1})\le \sqrt n,|y_1-x_1-n|\le \sqrt n\}} dy
  \widetilde G_{[n/2]}(x,y)\\ 
  &\le  
  \frac{C\Delta(x)}{n^{d^2/2}}
  \int_{W^d\cap \{\max_j (y_j-y_{j-1})\le \sqrt n,|y_1-x_1-n|\le \sqrt n\}} dy\Delta(y) \\ 
  &\le 
  \frac{C\Delta(x)}{n^{d^2/2}} 
  \int_{W^d\cap \{\max_j (y_j-y_{j-1})\le \sqrt n,|y_1-x_1-n|\le \sqrt n\}} dy 
  \prod_{1\le k<l\le d} 
  \left(
    (l-k)n^{1/2} 
  \right)
  \\
  &\le 
  \frac{C\Delta(x)}{n^{d^2/2}} 
  n^{\frac{d(d-1)}{4}} 
  \int_{W^d\cap \{\max_j (y_j-y_{j-1})\le \sqrt n\},|y_1-x_1-n|\le \sqrt n\}} dy
  \le \frac{2C\Delta(x)}{n^{\frac{d(d-1)}{4}}}
\end{align*}  
since 
\begin{multline*} 
  \int_{W^d\cap \{\max_j (y_j-y_{j-1})\le \sqrt n\},|y_1-x_1-n|\le \sqrt n} dy 
  \\
  \le \int_{x_1+n-\sqrt n}^{x_1+n+\sqrt n}
  \int_{y_1}^{y_1+\sqrt n}  
  \ldots 
  \int_{y_{d-1}}^{y_{d-1}+\sqrt n} dy_d\ldots dy_2dy_1  
  \le 2n^{d/2}.  
\end{multline*}  
To analyse $P_{12}$ we apply Lemma~\ref{lem.large.deviations} to obtain 
\begin{align*}
P_{12}&\le 
\frac{C\Delta(x)}{n^{\frac{d^2}{2}}}
  \int_{W^d\cap \{\max_j (y_j-y_{j-1})\le \sqrt n,|y_1-x_1-n|>\sqrt n\}} dy
  \mathrm{e}^{-d\frac{|y_1-x_1-n|}{\sqrt{[n/2]}}}
  \Delta(y) \\ 
  &\le \frac{C\Delta(x)}{n^{\frac{d(d-1)}{4}}} 
\end{align*} 
since 
\begin{multline*} 
  \int_{W^d\cap \{\max_j (y_j-y_{j-1})\le \sqrt n\},y_1-x_1-n>\sqrt n} 
  \mathrm{e}^{-d\frac{|y_1-x_1-n|}{\sqrt{[n/2]}}}
  dy 
  \\
  \le \int_{x_1+n+\sqrt n}^{\infty}
  \int_{y_1}^{y_1+\sqrt n}  
  \ldots 
  \int_{y_{d-1}}^{y_{d-1}+\sqrt n} 
  \mathrm{e}^{-d\frac{|y_1-x_1-n|}{\sqrt{[n/2]}}}
  dy_d\ldots dy_2dy_1  \\ 
  \le n^{(d-1)/2}
  \int_{x_1+n+\sqrt n}^{\infty}
  \mathrm{e}^{-d\frac{y_1-x_1-n}{\sqrt{[n/2]}}}dy_1 
  =
  n^{(d-1)/2}
  \int_{\sqrt n}^{\infty}
  \mathrm{e}^{-d\frac{y_1}{\sqrt{[n/2]}}}dy_1 
  \le n^{d/2}  
\end{multline*}
and, symmetrically, 
\[
  \int_{W^d\cap \{\max_j (y_j-y_{j-1})\le \sqrt n\},y_1-x_1-n<-\sqrt n} 
  \mathrm{e}^{-d\frac{|y_1-x_1-n|}{\sqrt{[n/2]}}}
  dy 
  \le n^{d/2}. 
\]  

To show the bound for other terms we analyse more carefully $P_d$, as it is notationally 
easier. 
Denote $y_{[i,j]} = (y_i,\ldots,y_j)$ 
and $\rho_{k}$ the stopping time $\rho$ 
corresponding to the Weyl Chamber $W^{k}$.  
We have, using induction and the Chebyshev inequality,
\begin{align*}
  P_d&\le  
  \int_{W^d\cap \{ (y_d-y_{d-1})> \sqrt n\} } \P_x(\rho>[n/2], S_{[n/2]}\in dy )
  \P_{y_{[1,\ldots,d-1]}}(\rho_{d-1}>n-[n/2])\\ 
  &\le 
  C\int_{W^d\cap \{ (y_d-y_{d-1})> \sqrt n\} } \P_x(\rho>[n/2], S_{[n/2]}\in dy )
  \frac{\Delta(y_{[1,\ldots,d-1]})}{n^{\frac{(d-1)(d-2)}{4}}}\\ 
&\le C 
\int_{W^d\cap \{ (y_d-y_{d-1})> \sqrt n\} } \P_x(\rho>[n/2], S_{[n/2]}\in dy )
\frac{\Delta(y_{[1,\ldots,d-1]})}{n^{\frac{(d-1)(d-2)}{4}}}
\frac{\prod_{j=1}^{d-1}(y_d-y_j)}{n^{(d-1)/2}}
\\
  &\le C 
\int_{W^d} \P_x(\rho>[n/2], S_{[n/2]}\in dy )
  \frac{\Delta(y)}{n^{\frac{d(d-1)}{4}}}
  =C
  \frac{\E_x [\Delta(S_{[n/2]});\rho>[n/2]]}{n^{\frac{d(d-1)}{4}}}
  \\
 &=C \frac{\Delta(x)}{n^{\frac{d(d-1)}{4}}},
\end{align*}   
where we used the harmonicity of $\Delta $ at the last step.
Other terms $P_j$ are analysed similarly using the bound 
\[
  \P_{y}(\rho_{d-1}>n-[n/2])
  \le 
  \P_{(y_{[1,j-1]})}(\rho_{j-1}>n-[n/2])
  \P_{(y_{[j,\ldots,d]}}(\rho_{d-j+1}>n-[n/2]).
\]

We are left to consider the case $\max_j(x_{j}-x_{j-1})>n^{1/2}$. 
Here, we can proceed similarly to the above. 
Suppose that $(x_d-x_{d-1})>\sqrt n$. Then, 
by the induction assumption, 
\[
  \P_{x}(\rho_{d}>n)
  \le 
  \P_{x_{[1,\ldots,d-1]}}(\rho_{d-1}>n)
  \le C \frac{\Delta(x_{[1,\ldots,d-1]})}{n^{\frac{(d-1)(d-2)}{4}}} 
  \le C \frac{\Delta(x)}{n^{\frac{d(d-1)}{4}}}. 
\] 
The other cases can be considered similarly. The proof of the uniform bound for $\tau$ can be done in a similar way 
or proved using the coupling 
between interlaced and ordered random walks 
discussed in subsection~\ref{coupling}. 
\end{proof}

\section{Tail asymptotics}
\label{sec:tail_asy}

%\textcolor{blue}{
%Tail asymptotics can be computed from the large $n$ limit of the integrals 
%\begin{align*}
%\P(\tau > n) & = \int_{W^d} \det(q_{n+i-j}(y_j - x_i))_{i, j = 1}^d \prod_{i=1}^d \lambda_i^n e^{-\sum_{i=1}^d \lambda_i(y_i - x_i)} dy_1 \ldots dy_d, \\
%\P(\rho > n) & = \int_{W^d} \det(q_{n}(y_j - x_i))_{i, j = 1}^d \prod_{i=1}^d \lambda_i^n e^{-\sum_{i=1}^d \lambda_i(y_i - x_i)} dy_1 \ldots dy_d.
%%& = \int_{W^d} \det \left(\frac{(y_j - x_i)^{n-1+i-j}}{(n-1+i - j)!} 1_{\{y_j \geq x_i\}}\right)_{i, j = 1}^d \prod_{i=1}^d \lambda_i^n e^{-\sum_{i=1}^d \lambda_i(y_i - x_i)} dy_1 \ldots dy_d.
%\end{align*}
%We consider this in three different cases: (A) $\lambda_1 > \ldots > \lambda_d$, (B) $\lambda_1 = \ldots = \lambda_d = 1$
%and (C) $\lambda_1 < \ldots < \lambda_d$. Other cases of inequalities between $\lambda_1, \ldots, \lambda_d$ could be treated by similar techniques. 
%In some cases, we will first find asymptotics for $\rho$ and then use the coupling in Section \ref{subsec_coupling} to extend to $\tau$. 
%}

\subsection{Proof of Theorem \ref{tail_asy} for $\lambda_1 > \ldots > \lambda_d.$}
By integrating the formula from Proposition \ref{transition_density},
\[
\P_x(\tau > n) = \int_{W^d} \left( \prod_{j=1}^d \lambda_j^n \right) e^{-\sum_{j=1}^d \lambda_j(y_j - x_j)}  \det(q_{n+i-j}(y_j - x_i))_{i, j = 1}^d 
dy_1\ldots dy_d.
\]
Change variables $\lambda_j y_j = n + \sqrt{n} z_j$ for each $j = 1, \ldots, d$ and apply Stirling's formula to obtain the large $n$ asymptotics,
\begin{align*}
& \P_x(\tau > n)\\
& \sim (2\pi)^{-d/2} \int_{\mathbb{R}^d} \det \left(\lambda_j^{j-i} e^{-\sqrt{n} z_j}(1+z_j/\sqrt{n} - x_i \lambda_j/n)^{n-1+i-j}\right)_{i, j = 1}^d e^{\sum_{i=1}^d \lambda_i x_i} dz_1 \ldots dz_d \\
& \sim(2\pi)^{-d/2}  e^{\sum_{i=1}^d \lambda_i x_i} \det(\lambda_j^{j-i} e^{-x_i \lambda_j})_{i, j = 1}^d \int_{\mathbb{R}^d} e^{-\sum_{j=1}^d z_j^2/2} dz_1 \ldots dz_d \\
& =  e^{\sum_{i=1}^d \lambda_i x_i} \det(\lambda_j^{j-i} e^{-x_i \lambda_j})_{i, j = 1}^d \\
& = h(x).
\end{align*}

%This argument can also be used to prove local limit theorems. 
%\begin{align*}
%\P_x(n^{-1/2}S_n \in dy, \rho > n)  & \sim (2\pi)^{-d/2} e^{-\sum_{j=1}^d y_j^2} h(x), \quad n \rightarrow \infty \\
%\P_x(n^{-1/2}S_n \in dy, \tau > n)  & \sim (2\pi)^{-d/2} \prod_{j = 1}^d \lambda_j^{j-1} e^{-\sum_{j=1}^d y_j^2} h(x), \quad n \rightarrow \infty. 
%\end{align*}

\subsection{Tail asymptotics for equal rates.} 
We set $\lambda_1 = \ldots = \lambda_d = 1$ and the general case can be recovered by scaling. 
We first consider the case $d = 2$. 
For $x = (x_1, x_2) \in W^2$ and $n \geq 1$ let
\begin{equation}\label{defn.p.2}
  p_{x_1,x_2}(n) =  (-1)^n
  \sum_{k=1}^\infty (-1)^{k+1}\binom{k/2 -1}{n}
\frac{(x_2 - x_1)^k}{k!}.  
\end{equation}
We extend the definition to all of $\mathbb{R}^2$ by antisymmetry $p_{x_1, x_2}(n) = -p_{x_2, x_1}(n)$.  
\begin{lemma}\label{lem:rho.2}
For $x=(x_1,x_2)\in W^2$ and $n \geq 1$,
  \begin{equation}\label{eq.exact.d.2}
  \pr_x(\rho>n+1)  = p_{x_1,x_2} (n). 
  \end{equation}
  Moreover, for any fixed $N\ge 1$ and $C>0$,  uniformly in 
  $x\in W^2$ with $x_2-x_1\le C\sqrt{n}$,  
  the following asymptotic expansion is valid,
  \begin{equation*}
  %\label{eq.asym.d.2}
    \left|\pr_x(\rho>n+1) -  (-1)^n \sum_{k=0}^{N-1}  \binom{k-1/2}{n}
    %{k-1/2 \choose  n} 
\frac{(x_2 - x_1)^{2k+1}}{(2k+1)!} 
    \right| 
\le C_0 \left(\frac{(x_2 - x_1)^{2N + 1}}{n^{N + 1/2}}\right)  
  \end{equation*}
  for some $C_0<\infty$. 
\end{lemma}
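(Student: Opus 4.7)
The plan is to convert $\P_x(\rho > n)$ into a simple probabilistic identity involving a difference of two i.i.d. Gamma variables, derive the exact formula via an explicit generating function, and then read off the asymptotic expansion by combining the reflection formula for $\Gamma$ with parity considerations in the series.

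First I would apply the determinantal formula \eqref{eq.km.rho} for $d=2$ and $\lambda_1=\lambda_2=1$, which expresses $\P_x(\rho > n)$ as a double integral over $W^2$ of a $2\times 2$ determinant of $q_n$'s weighted by $e^{-(y_1+y_2-x_1-x_2)}$. Setting $u_i=y_i-x_i$ and $c = x_2-x_1$, the ``direct'' product $q_n(u_1)q_n(u_2)e^{-u_1-u_2}$ integrates over $\{u_1<u_2+c\}$ to $\P(\Gamma_n^{(1)} \le \Gamma_n^{(2)} + c)$, where $\Gamma_n^{(1)},\Gamma_n^{(2)}$ are i.i.d.\ $\mathrm{Gamma}(n,1)$. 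For the ``off-diagonal'' product $q_n(u_1-c)q_n(u_2+c)$, the substitution $(a,b)=(u_1-c,u_2)$ (valid because $q_n$ vanishes on $(-\infty,0]$) turns the integral into $\P(\Gamma_n^{(2)} > \Gamma_n^{(1)} + c)$. By symmetry of $D_n := \Gamma_n^{(1)}-\Gamma_n^{(2)}$ these combine to the clean identity
\[ \P_x(\rho > n) = 1 - 2\,\P(D_n > c). \]

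Next, since $\E e^{i\theta D_n} = (1+\theta^2)^{-n}$, I would sum $\sum_{n\ge 1} t^n \E e^{i\theta D_n} = t/((1-t)+\theta^2)$ for $|t|<1$, and invert using the standard Fourier identity $\int e^{-i\theta z}/(a^2+\theta^2)\,d\theta = (\pi/a)\,e^{-a|z|}$ to obtain
\[ \sum_{n=1}^\infty t^n f_{D_n}(z) = \frac{t}{2\sqrt{1-t}}\,e^{-\sqrt{1-t}\,|z|}. \]
Integrating over $z\in(c,\infty)$ and substituting back yields
\[ \sum_{n=0}^\infty t^n \P_x(\rho > n+1) = \frac{1-e^{-c\sqrt{1-t}}}{1-t}. \]
Now expand $e^{-c\sqrt{1-t}}=\sum_{k\ge 0}(-c)^k(1-t)^{k/2}/k!$ and apply Newton's generalized binomial theorem to each $(1-t)^{k/2-1}$. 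The $k=0$ contribution (using $\binom{-1}{n}=(-1)^n$) cancels the leading $1/(1-t)$, and extracting the coefficient of $t^n$ from the rest gives the formula \eqref{defn.p.2} directly.

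For the asymptotic expansion, I would split the sum in \eqref{defn.p.2} by parity of $k$. Even terms $k=2m$ with $1\le m\le n$ vanish because $\binom{m-1}{n}=0$ for such $m$; the remaining even terms with $m\ge n+1$ contribute at most $\sum_{m\ge n+1} c^{2m}\binom{m-1}{n}/(2m)!$, and Stirling together with $c=o(\sqrt n)$ makes this $o(c^{2N+1}/n^{N+1/2})$ for any fixed $N$. For odd $k=2j+1$, the reflection identity $\Gamma(j+1/2-n)=(-1)^{n-j}\pi/\Gamma(n-j+1/2)$ gives
\[ \binom{j-1/2}{n} = (-1)^{n-j}\,\frac{\Gamma(j+1/2)\,\Gamma(n-j+1/2)}{\pi\,n!}, \]
which by Stirling equals $O(n^{-j-1/2})$ uniformly. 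The odd terms alternate in sign in $j$ and the ratios between successive terms behave like $c^2/(4nj)$, so for $c^2/n$ small the sequence eventually decreases, and truncating at $k=N-1$ leaves a remainder of size $O(c^{2N+1}/n^{N+1/2})$, uniformly in $x \in W^2$ with $x_2-x_1=o(\sqrt n)$.

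The main obstacle will be tracking constants carefully to obtain the required uniformity: because the hypothesis allows $c=x_2-x_1$ to grow with $n$ (rather than being bounded), one must verify that the Stirling estimate for $\binom{j-1/2}{n}$ and the tail bound for the alternating odd sum both absorb errors into the stated $O(c^{2N+1}/n^{N+1/2})$, and similarly that the even-$k$ remainder does not spoil the bound as $c/\sqrt n$ approaches its threshold.
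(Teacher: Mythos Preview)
Your argument is correct and arrives at exactly the same generating function
\[
\sum_{n\ge 0} t^n\,\P_x(\rho>n+1)=\frac{1-e^{-(x_2-x_1)\sqrt{1-t}}}{1-t}
\]
as the paper, after which your expansion and the parity analysis for the asymptotic expansion coincide with the paper's. (There is a small slip in the substitution you quote for the off-diagonal term: $(a,b)=(u_1-c,u_2+c)$ rather than $(a,b)=(u_1-c,u_2)$ is what makes both $q_n$'s have the right argument and keeps the exponential weight $e^{-a-b}$; the conclusion is unaffected.)

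The route to the generating function, however, is genuinely different. The paper uses a martingale argument: it observes that $(1-\lambda^2)^n e^{-\lambda(S_2(n-1)-S_1(n))}$ is a martingale, applies optional stopping at $\rho$, and then exploits the memoryless property to identify the overshoot $S_1(\rho)-S_2(\rho-1)$ as an independent exponential, yielding $\E_x[(1-\lambda^2)^\rho]=(1-\lambda^2)e^{-\lambda(x_2-x_1)}$ directly. Your approach instead starts from the determinantal formula~\eqref{eq.km.rho}, reduces to the reflection-type identity $\P_x(\rho>n)=1-2\P(D_n>c)$ for the symmetric Gamma difference $D_n$, and then computes the generating function by Fourier inversion of $(1+\theta^2)^{-n}$. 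The paper's method is shorter and makes the role of the memoryless property explicit; yours is more computational but has the advantage of never invoking optional stopping and of making transparent the connection to the reflection principle~\eqref{refl.principle}, which the paper only brings in later for the Pfaffian representation in Lemma~\ref{lem.pfaff}.
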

\begin{proof}
For  $s: 0<s<1$   consider the following 
sequence 
$ (s^ne^{-\sqrt{1-s} (S_2(n-1)-S_1(n))})_{n\ge 1}$, 
which forms a martingale  with respect to the filtration 
$\mathcal{F}_n = \sigma(S_2(0), \ldots, S_2(n-1), S_1(1), \ldots, S_1(n)).$  
In this case 
\[
\rho = \inf\{n\ge 1\colon S_1(n)>S_2(n-1)\}  
\]
and $\rho$ is a stopping time with respect to $\mathcal F_n$.  
An application of the optional stopping theorem gives 
$$
\E_x [s^\rho e^{-\sqrt{1-s} (S_2(\rho-1) -S_1(\rho) }] = 
s\E_x[e^{-\sqrt{1-s} (x_2-S_1(1))}].
$$
To justify the use of the optional stopping theorem note that 
$S_2(n-1)-S_1(n)\ge 0$ for $n<\rho$ and 
$S_1(\rho)-S_2(\rho-1)$ has an exponential distribution with parameter $1$ 
by the memoryless property of the exponential distribution. 
Hence, for $s:0<s<1$, 
\[ 
0\le s^{\rho \wedge n} e^{-\sqrt{1-s} (S_2(\rho\wedge n-1) -S_1(\rho \wedge n)) }
\le e^{-\sqrt{1-s}(S_2(\rho-1) -S_1(\rho))}, 
\] 
which is an integrable random variable. 

Using again the lack of memory of the exponential distribution 
we note that the overshoot  $S_1(\rho)-S_2(\rho-1) $ has exponential distribution with parameter $1$ and is
independent of $\rho$.  Therefore
\begin{align*}
  \E_x [s^{\rho}] 
  &= s e^{-\sqrt{1-s} (x_2-x_1)}.
\end{align*}
Then
\begin{align*}
  \sum_{n=0}^\infty
  s^n \pr_x(\rho>n+1) &= 
  \frac{1-\E_x s^{\rho-1}}{1-s}  
  =
  \sum_{k=1}^\infty  (-1)^{k+1}(1-s)^{k/2-1}\frac{(x_2 - x_1)^k}{k!}.
  \end{align*}
 Applying  the binomial theorem, 
\begin{align*}
  \sum_{n=0}^\infty
  s^n \pr_x(\rho>n+1)
%  &=
%\sum_{n=1}^\infty  (1-s)^{k/2-1}(-1)^{k+1}\frac{(x_2 - x_1)^k}{k!}
%  \\
&=\sum_{k=1}^\infty (-1)^{k+1}
\sum_{n=0}^\infty \binom{k/2-1}{n}(-1)^n s^n
\frac{(x_2 - x_1)^k}{k!}
  \\
&=\sum_{n=0}^\infty (-1)^n s^n 
  \sum_{k=1}^\infty (-1)^{k+1}  \binom{k/2-1}{n}
  \frac{(x_2 - x_1)^k}{k!}.
\end{align*}
Equating powers of $s$ gives~\eqref{eq.exact.d.2}. 

To obtain the asymptotic expansion
note first the representation 
\begin{multline*} 
  p_{x_1,x_2}(n) 
  =(-1)^n 
  \sum_{j=0}^{\infty} \binom{j-1/2}{n}
  \frac{(x_2-x_1)^{2j+1}}{(2j+1)!} \\ 
  - 
  (-1)^n  
  \sum_{j=n+1}^{\infty} \binom{j-1}{n}
  \frac{(x_2-x_1)^{2j}}{(2j)!}.  
\end{multline*}
Using the Stirling approximation we can estimate the second series 
and obtain the required bound. 
% as follows 
% \begin{multline*}
%   \sum_{j=n+1}^{\infty} \binom{j-1}{n}
%   \frac{(x_2-x_1)^{2j}}{(2j)!}
%   \le 
%   \frac{C}{n!e^n}
%   \sum_{j=n+1}^{\infty}  
%   \frac{(x_2-x_1)^{2j}}{4^j j^j 
%   (j-n)^{j-n-1/2}}\\
%   \le 
%   \frac{C(x_2-x_1)^{2n+2}}{n!e^n}
%   \sum_{j=0}^{\infty}  
%   \frac{(x_2-x_1)^{2j}}{4^{j+n+1} (j+n+1)^{j+n+1} 
%   (j+1)^{j+1/2}}\\ 
%   \le 
%   \frac{C(x_2-x_1)^{2n+2}}{n!e^n}
%   \sum_{j=0}^{\infty}  
%   \frac{C^{2j}}{4^{j+n+1} (j+n+1)^{n+1} 
%   (j+1)^{j+1/2}}
%   \le 
%   \frac{C(x_2-x_1)^{2n+2}}{n!e^n}. 
% \end{multline*}  
% The remainder of the first series can be estimated as follows 
% \begin{multline*}
%   \sum_{j=N}^{\infty} \binom{j-1/2}{n}
%   \frac{(x_2-x_1)^{2j+1}}{(2j+1)!}  
%   \le  
%   (x_2-x_1)^{2N+1} 
%   \biggl(
%   \sum_{j=N}^n 
%   \frac{j!(n-j)!}{n!(2j+1)!} C^j n^{j-N-1/2}
%   \\ 
%   +
%   \sum_{j=n+1}^\infty 
%   \frac{j!}{n!(2j+1)!(j-n)!} C^j n^{j-N-1/2}
%   \biggr).  
% \end{multline*}  
% Then, using the Stirling approximation, 
% \[ 
%   \sum_{j=N}^n 
%   \frac{j!(n-j)!}{n!(2j+1)!} C^j n^{j-N-1/2}
%   \le 
%   \frac{C}{n^{1/2}}  
%   \sum_{j=N}^n 
%   \frac{(Ce^2)^j}{n^nj^j} 
%    n^{j-N} 
%    \le 
%    \frac{C}{n^{1/2+N}}
%    \sum_{j=1}^\infty 
%    \left(
%     \frac{Ce^2}{j}
%    \right)^j
% \] 
% and 
% \begin{multline*}
%   \sum_{j=n+1}^\infty 
%   \frac{j!}{n!(2j+1)!(j-n)!} C^j n^{j-N-1/2} 
% \le \frac{C}{n^{N+1/2}}
% \sum_{j=n+1}^\infty  
% \frac{(4Ce^4)^j n^j }{n^n j^j(j-n)^{j-n}}. 
% \end{multline*}  
% As the last series is uniformly bounded in $n$ we arrive at the conclusion. 
\end{proof}

The first step in the analysis for general $d$ is an expression for $\P(\rho > n)$ as a Pfaffian. 
Let $A = (a_{ij})_{i, j = 1}^{2m}$ be a $2m \times 2m$ antisymmetric matrix. 
Let $\Pi_{2m}$ be the set of partitions of $\{1, \ldots, 2m\}$ with the property that $\sigma(2i-1) < \sigma(2i)$ for each
$i = 1, \ldots, m$ 
and $\sigma(1) < \sigma(3) < \ldots < \sigma(2m-1)$. 
Define the Pfaffian of $A$ to be
\[
\Pfaff(A) = \sum_{\sigma \in \Pi_{2m}} \text{sgn}(\sigma) \prod_{i=1}^m a_{\sigma(2i-1), \sigma(2i)}. 
\]
%There is a standard definition of the Pfaffian for matrices of even size while if the matrix is of odd size then there is no standard definition. 
%If $d$ is odd the definition that is convenient for us is the following. Let $A = (a_{ij})$ be a $(2m+1) \times (2m+1)$ antisymmetric matrix and define
%\begin{equation*}\label{eq.pf.recursive}
%\Pfaff(A) = 
%\sum_{l=1}^{2m+1} (-1)^{l+1}\Pfaff(a_{ij})_{i,j\in [2m+1]\setminus\{l\}}. 
%\end{equation*}
%This is also the definition used in \cite{doumerc_o_connell}. 

\begin{lemma}
\label{lem.pfaff}
For all $x \in W^d$ and $n, d \geq 1$
\[
\P_x(\rho > n) = 
\begin{cases}
\Pfaff(p_{x_i, x_j}(n-1))_{i, j = 1}^d & \text{ if $d$ is even,} \\
\sum_{l=1}^{d} (-1)^{l+1}\Pfaff(p_{x_i, x_j}(n-1))_{i, j \in [d-1]\setminus\{l\}} & \text{ if $d$ is odd.}
\end{cases}
\]
\end{lemma}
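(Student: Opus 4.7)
The plan is to apply de Bruijn's Pfaffian identity to the integral form of $\P_x(\rho > n)$ obtained from the determinantal transition density \eqref{eq.km.rho}. I specialise to $\lambda_1 = \cdots = \lambda_d = 1$; the general case follows by rescaling. After pulling the exponential factors out of the transition density I would write
\[
\P_x(\rho > n) = e^{\sum_{j=1}^d x_j} \int_{y_1 < \cdots < y_d} \det\bigl(\phi_i(y_j)\bigr)_{i,j=1}^d \, dy_1 \cdots dy_d,
\]
where $\phi_i(y) := e^{-y} q_n(y - x_i)$.

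De Bruijn's identity converts such ordered multi-dimensional integrals into Pfaffians. For even $d = 2m$ it gives
\[
\int_{y_1 < \cdots < y_d} \det\bigl(\phi_i(y_j)\bigr)_{i,j=1}^d dy = \Pfaff(\alpha_{ij})_{i,j=1}^d, \qquad \alpha_{ij} := \int\!\!\int_{u<v}\bigl(\phi_i(u)\phi_j(v) - \phi_j(u)\phi_i(v)\bigr) du\, dv.
\]
For odd $d = 2m+1$ the bordered-matrix version of de Bruijn, expanded along its augmenting row/column, yields
\[
\sum_{l=1}^d (-1)^{l+1} \beta_l \, \Pfaff(\alpha_{ij})_{i,j \in [d]\setminus\{l\}}, \qquad \beta_l := \int \phi_l(u)\, du.
\]

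The third step is to identify the building blocks. The one-dimensional integral reduces immediately to $\beta_l = e^{-x_l}$, using that $q_n(\cdot - x_l) e^{-(\cdot - x_l)}$ is the $\mathrm{Gamma}(n,1)$ density. The antisymmetric entries are recognised by inspecting \eqref{eq.km.rho} in the case $d = 2$: expanding that $2 \times 2$ determinant and pulling out the exponential prefactors yields $\alpha_{ij} = e^{-x_i - x_j}\, \P_{(x_i, x_j)}(\rho > n)$, so Lemma \ref{lem:rho.2} gives $\alpha_{ij} = e^{-x_i - x_j} p_{x_i, x_j}(n-1)$. In particular $(p_{x_i, x_j}(n-1))_{i,j}$ is antisymmetric by its antisymmetric extension to $\mathbb{R}^2$, so the Pfaffians are well-defined.

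The final step is to absorb the exponential prefactors using $\Pfaff(DAD) = \det(D)\Pfaff(A)$ with $D = \mathrm{diag}(e^{-x_i})$ restricted to the active index set. In the even case the pulled-out factor $\prod_i e^{-x_i}$ cancels the global $e^{\sum x_j}$, leaving $\Pfaff(p_{x_i, x_j}(n-1))_{i,j=1}^d$. In the odd case the same mechanism, combined with the $e^{-x_l}$ from $\beta_l$, again produces $\prod_i e^{-x_i}$ inside each summand and cancels the prefactor, leaving the stated signed sum. The main obstacle I anticipate is the sign and index book-keeping in the odd case, specifically verifying that the Pfaffian expansion along the augmenting row of the de Bruijn bordered matrix produces exactly the signs $(-1)^{l+1}$; the even case and the reduction to $d=2$ are essentially automatic once the determinantal structure is in place.
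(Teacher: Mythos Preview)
Your proposal is correct and follows the same overall strategy as the paper: integrate the determinantal transition density \eqref{eq.km.rho} over $W^d$ and apply de Bruijn's identity, then identify the two-point entries with the $d=2$ survival probability via Lemma~\ref{lem:rho.2}.

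There are two minor differences worth noting. First, you recognise the de Bruijn entry $\alpha_{ij}$ directly as $e^{-x_i-x_j}\P_{(x_i,x_j)}(\rho>n)$ by matching it against the $d=2$ instance of \eqref{eq.km.rho}; the paper instead writes the entry as $2\P_{(x_i,x_j)}(S_2(n)>S_1(n))-1$ and then invokes the reflection principle \eqref{refl.principle} (which uses the memoryless property) to convert this to $\P_{(x_i,x_j)}(\rho>n)$. Your route bypasses that step entirely. Second, for odd $d$ you use the bordered de Bruijn formula and expand the augmenting column, whereas the paper appends a phantom coordinate $x_{d+1}$, applies the even case to the $(d{+}1)$-dimensional walk, Laplace-expands the resulting Pfaffian, and sends $x_{d+1}\to\infty$ so that each $p_{x_l,x_{d+1}}(n-1)\to 1$. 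The two arguments are equivalent (the bordered formula is precisely this limit), and your sign concern is unfounded: the $(-1)^{l+1}$ comes out exactly as stated, as one can check already for $d=3$.
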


\begin{proof}
We first suppose that $d$ is even. The transition density in \eqref{eq.km.rho} can be integrated to give
\[
\P_x(\rho > n) = \int_{W^d} \det(f_{n}(y_j - x_i))_{i, j = 1}^d dy_1 \ldots dy_d.
\]
This can be expressed as a Pfaffian by using de Bruijn's integral formula \cite{bruijn}. The $(i, j)$ entry in the Pfaffian is given for $i < j$ and $x_j > x_i$ by
\[
\int_{\mathbb{R}^2} \text{sgn}(y_j-y_i) f_n(y_i - x_i) f_n(y_j - x_j) dy_i dy_j = 2\P_{(x_i, x_j)}(S_2(n) > S_1(n)) 
- 1.
\]
We have
for $x_i < x_j$,
\begin{align}
\label{refl.principle}
\P_{(x_i, x_j)}(\rho \leq n) 
% = \P_{(x_1, x_2)}(\rho \leq n, S_2(n) < S_1(n)) + \P_{(x_1, x_2)}(\rho \leq n, S_2(n) > S_1(n)) \nonumber \\
& = \P_{(x_i, x_j)}(S_2(n) < S_1(n), \rho \leq n) + \P_{(x_i, x_j)}(S_2(n) > S_1(n), \rho \leq n) \nonumber \\
& = \P_{(x_i, x_j)}(S_2(n) < S_1(n)) + \P_{(x_i, x_j)}(S_2(n) > S_1(n), \rho \leq n).
\end{align}
On the event $\{\rho \leq n\}$ the paths of $S_1$ and $S_2$ can be interchanged after the first time they intersect.
For $0 \leq k \leq n$, 
\begin{align*}
\hat{S}_1(k) = S_1(k) 1_{k < \rho} + S_2(k) 1_{k \geq \rho} \\
\hat{S}_2(k) = S_2(k) 1_{k < \rho} + S_1(k) 1_{k \geq \rho}. 
\end{align*}
Then $(S_1, S_2)$ has the same distribution as $(\hat{S}_1, \hat{S}_2)$ using the definition of $\rho$ and lack of memory of exponentials. Moreover $\{S_2(n) > S_1(n)\}$ is equivalent to $\{\hat{S}_2(n) < \hat{S}_1(n)\}$ on 
$\{\rho \leq n \}$. 
Using this in the second term of \eqref{refl.principle} gives
\[
\P_{(x_i, x_j)}(\rho \leq n)  = 2\P_{(x_i, x_j)}(S_2(n) < S_1(n)).
\]

This allows the entries in the Pfaffian to be rewritten in the stated form. For odd $d$, 
a version of the de Bruijn integration formula still holds \cite{bruijn} and gives the stated formula. 
Alternatively, we can add in an extra component to our random walk with starting position $x_{d+1}$, apply a Laplace expansion and let $x_{d+1} \rightarrow \infty$.
%\textcolor{blue}{ Since $d+1$ is even we have proved above that 
%\[
%\P_{(x_1, \ldots, x_{d+1})}(\rho > n)  = \Pfaff(p_{x_i, x_j}(n-1))_{i, j = 1}^{d+1}. 
%\]
%A Laplace expansion of the Pfaffian gives that
%\[
%\P_{(x_1, \ldots, x_{d+1})}(\rho > n)  = \sum_{l=1}^{d} (-1)^{l} p_{x_l, x_{d+1}}(n-1) \Pfaff(p_{x_i, x_j}(n-1))_{i,j\in [d]\setminus\{l\}}.
%\]
%Letting $x_{d+1} \rightarrow \infty$ gives the required result. 
%}
\end{proof}

\begin{lemma}
\label{lem.bin.asy}
For any $k \geq 0$ and any $N \geq 1$ there are coefficients $(a_{j}^{(k)})_{j \geq 0}$  such that
\[
(-1)^n \binom{k-1/2}{n} = \sum_{j=0}^{N-1} a_{j}^{(k)} (n+1)^{-k-1/2-j} + O((n+1)^{-N -1/2}).
\]
Furthermore for any $k \geq 0,$
\[
a_0^{(k)} = \frac{(-1)^k \Gamma(k + 1/2)}{\pi}.
\]
\end{lemma}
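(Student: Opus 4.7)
The plan is to express the binomial coefficient in terms of Gamma functions, then apply a reflection identity followed by the classical asymptotic expansion for ratios of Gamma functions.

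First I would write
\[
\binom{k-1/2}{n} = \frac{\Gamma(k+1/2)}{n! \, \Gamma(k+1/2-n)}.
\]
Then I would simplify $\Gamma(k+1/2-n)$ using the reflection formula $\Gamma(z)\Gamma(1-z) = \pi/\sin(\pi z)$ with $z = k+1/2-n$. The sine factor evaluates as
\[
\sin(\pi(k+1/2-n)) = \sin(\pi k + \pi/2) \cos(\pi n) = (-1)^k (-1)^n,
\]
so
\[
\Gamma(k+1/2-n) = \frac{\pi (-1)^{n+k}}{\Gamma(n-k+1/2)}.
\]
Substituting back yields the clean identity
\[
(-1)^n \binom{k-1/2}{n} = \frac{(-1)^k \Gamma(k+1/2)}{\pi} \cdot \frac{\Gamma(n-k+1/2)}{\Gamma(n+1)}.
\]
This already identifies the conjectured leading coefficient $a_0^{(k)} = (-1)^k \Gamma(k+1/2)/\pi$, reducing the problem to an asymptotic expansion of $\Gamma(n-k+1/2)/\Gamma(n+1)$ in powers of $(n+1)^{-1}$.

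Next I would invoke the classical asymptotic expansion for ratios of Gamma functions: for fixed $a, b \in \mathbb{R}$,
\[
\frac{\Gamma(z+a)}{\Gamma(z+b)} = z^{a-b}\left(1 + \sum_{j=1}^{N-1} \frac{c_j(a,b)}{z^j} + O(z^{-N})\right),
\]
obtainable by subtracting the two Stirling series for $\log \Gamma(z+a)$ and $\log \Gamma(z+b)$ and exponentiating. Applying this with $z = n+1$, $a = -k-1/2$, $b = 0$ gives
\[
\frac{\Gamma(n-k+1/2)}{\Gamma(n+1)} = (n+1)^{-k-1/2}\left(1 + \sum_{j=1}^{N-1} \frac{c_j}{(n+1)^j} + O((n+1)^{-N})\right).
\]
Multiplying by $(-1)^k\Gamma(k+1/2)/\pi$ produces the stated expansion with $a_j^{(k)} = (-1)^k\Gamma(k+1/2) c_j / \pi$, and the $j=0$ term matches the claimed formula.

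The only mildly delicate point is justifying the Gamma ratio expansion to arbitrary order $N$ uniformly in $n$; this is entirely routine from Stirling's series, so I do not anticipate any serious obstacle. The whole argument is essentially algebraic manipulation combined with a standard asymptotic.
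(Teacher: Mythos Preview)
Your proposal is correct and follows essentially the same route as the paper: both apply Euler's reflection formula to reach the identity $(-1)^n\binom{k-1/2}{n} = \frac{(-1)^k\Gamma(k+1/2)}{\pi}\cdot\frac{\Gamma(n-k+1/2)}{\Gamma(n+1)}$, then appeal to the classical asymptotic expansion for a ratio of Gamma functions. The only cosmetic difference is that the paper justifies the ratio expansion via a Beta-type integral representation and Watson's lemma (citing Erd\'elyi--Tricomi), whereas you obtain it by subtracting Stirling series; both are standard derivations of the same well-known result.
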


\begin{proof}
This is a consequence of an asymptotic expansion of a ratio of Gamma functions in \cite{erdelyi_tricomi}. 
The last paragraph in \cite{erdelyi_tricomi} gives the statement with the coefficient 
$a_0^{(k)} = 1/\Gamma(-k+1/2)$. This is equivalent to the expression for $a_0^{(k)}$ in the statement of the Lemma after using Euler's reflection formula 
\[
\Gamma(-k + 1/2) \Gamma(k+1/2) = (-1)^k \pi. \qedhere
\]
%\textcolor{blue}{We first note that Euler's reflection formula gives
%\[
%(-1)^n {k -1/2 \choose n} = \frac{(-1)^k \Gamma(k+1/2) \Gamma(n + 1/2 - k)}{\pi \Gamma(n+1)}.
%\]
%An asymptotic expansion of a ratio of Gamma functions is given in \cite{erdelyi_tricomi}. 
%One method is to use an integral expression for a ratio of Gamma functions which can be obtained by using two different interpretations of the Beta function:
%\[
%\frac{\Gamma(n + 1/2 - k)}{\Gamma(n +1)} = \frac{1}{\Gamma(1/2 + k)} \int_0^{\infty} e^{-(n + 1/2 - k) v}
%(1- e^{-v})^{k-1/2} dv.
%\]
%Watson's lemma then gives the stated asymptotic expansion. We refer to \cite{erdelyi_tricomi} for more details including expressions for the coefficients. }
%\[
%\frac{\Gamma(1/2 + n - k)}{\Gamma(n +1)} \sim \sum_{j=0}^{\infty} a_j^{(k)} (n+1)^{-k-1/2 - j}.
%\]
\end{proof}

Combining Lemma \ref{lem:rho.2} with Lemma \ref{lem.bin.asy} gives that for any $N$ there exist coefficients $(a_j^{(k)}: j, k = 0, \ldots, N)$ such that as $n \rightarrow \infty$,
 \begin{equation}
 \label{asy.exp.d.2}
    \pr_{(x_1, x_2)}(\rho>n) =  q_{x_1, x_2}(n)    + O\left(\frac{1+(x_2 - x_1)^{2N + 1}}{n^{N+1/2}}\right) 
  \end{equation}
where for any $(y, z) \in \mathbb{R}^2$
  \[
 q_{y, z}(n) = (-1)^n \sum_{k=0}^{N-1}  \sum_{j=0}^{N-1} a_j^{(k)} n^{-k-1/2 - j}\frac{(z - y)^{2k+1}}{(2k+1)!}.
  \]

%\textcolor{blue}{
%\begin{proposition}
%\label{tail.asy.rho}
%If $\lambda_1 = \ldots = \lambda_d = 1$ then uniformly for $x \in \text{int}(W^d)$ with $x_d -x_1 = o(\sqrt{n})$
%\[
%\P_x(\rho > n) \sim \mathfrak{X} \Delta(x) n^{-d(d-1)/4}, \quad n \rightarrow \infty
%\]
%with $\mathfrak{X}$ given in Theorem \ref{tail_asy}.
%\end{proposition} 
%}

\begin{proof}[Proof of Theorem \ref{tail_asy} part (ii)]
%Suppose that $d$ is even and let $l = d/2$. We have
%\begin{align*}
%\P_x(\rho > t) & = \frac{1}{2^d d!}\sum_{\sigma \in S_d} \mathrm{sgn}(\sigma) \prod_{i=1}^{l} \, \sum_{n=0}^{\infty} 
%(-1)^t {n - 1/2 \choose t} 
%\frac{(x_{\sigma(2i-1)} - x_{\sigma(2i)})^{2n+1}}{(2n+1)!} \\
%& =  \frac{1}{2^d d!} \sum_{\sigma \in S_d} \mathrm{sgn}(\sigma) \sum_{p \in \mathbb{N}^l} \prod_{i=1}^l  (-1)^t {p_i - 1/2 \choose t} 
%\frac{(x_{\sigma(2i-1)} - x_{\sigma(2i)})^{2p_i+1}}{(2p_i+1)!}.
%\end{align*}
We first suppose that $d$ is even and let $l = d/2$. Let $[N] = \{0, \ldots, N\}$. 
We use \eqref{asy.exp.d.2}, Lemma \ref{lem.pfaff},   
antisymmetry of $q_{x, y}$ 
and the fact that $q_{x,y}(n)$ is bounded for $|y-x|\le \sqrt n$ 
to obtain that 
\begin{equation}
\label{asy.d}
\P_x(\rho > n) = \Pfaff(q_{x_i, x_j}(n))_{i, j = 1}^d +  O\left((1+(x_d - x_1)^{2N + 1})n^{-N-1/2}\right).
\end{equation}
% has the asymptotic expansion
%\begin{align}
%\label{asy.d}
% \sum_{\sigma \in \Pi_d} \mathrm{sgn}(\sigma) \sum_{p \in [N-1]^l} \sum_{k \in [N-1]^l}
% \prod_{i=1}^l  
% a_{k_i}^{(p_i)} n^{-p_i-1/2 - k_i} \frac{(x_{\sigma(2i-1)} - x_{\sigma(2i)})^{2p_i+1}}{(2p_i+1)!}
%\end{align}
%with an error that is $O\left((1+(x_d - x_1)^{2N + 1})n^{-N-1/2}\right)$.
For all $x \in \mathbb{R}^d$ let
\[
F(x) = \Pfaff(q_{x_i, x_j}(n))_{i, j = 1}^d.
\]
This definition requires that $q_{x, y} = -q_{y, x}$. 
We first show $F$ is an antisymmetric polynomial in $(x_1, \ldots, x_{d})$. 
For each $1 \leq k < l \leq d$ let $D_{kl}$ denote the permutation matrix corresponding to the transposition of 
$x_k$ and $x_l$. 
Let $Q_x = (q_{x_i, x_j}(n))_{i, j = 1}^d$ and $x^{kl}$ be given by the vector $x$ with the $k$-th and $l$-th co-ordinates transposed.  
We use a conjugation formula for Pfaffians: for $d \times d$ matrices $A$ and $B$ such that $A$ is antisymmetric then $\Pfaff(B A B^T) = \Pfaff(A) \text{det}(B)$. Then
\begin{align*}
F(x) & = \Pfaff(Q_x) \\
& = \Pfaff(D_{kl} Q_x D_{kl}) \text{det}(D)^{-1} \\
& = (-1) \Pfaff(Q_{x^{kl}})\\
& = (-1) F(x^{kl}).
\end{align*}
Arguments of this form can be extended to general reflection groups, see Lemma 7.5 of \cite{doumerc_o_connell}. Therefore the Vandermonde determinant divides the first term on the right hand side of \eqref{asy.d}. Without loss of generality set $x_1 :=0$. As we have assumed $x_d - x_1 = o(n^{1/2})$
we can now assume $x_2, \ldots, x_d = o(n^{1/2})$. The relationship between the $x_i$ and $n$ means that for $x_2, \ldots, x_d = o(n^{1/2})$,
\begin{align*}
\P_x(\rho > n) = (\mathfrak{X} + o(1)) \Delta(x) n^{-d(d-1)/4}, \quad n \rightarrow \infty.
% \bigg( 1 + \sum_{r=1}^{\infty} U_r(x) n^{-r} \bigg)
%\sum_{k \in [N]^l} \sum_{r=1}^{\infty}U_r^{(k_1, \ldots, k_l)}(x) t^{-r-k_i}\bigg)  
%+ O\left(\frac{1+(x_d - x_1)^{2N + 1}}{n^{N+1/2}}\right)
\end{align*}
At this stage $\mathfrak{X}$ in unknown and we will determine its value later.

In the case when $d$ is odd,
\begin{equation}
\label{d.odd.eq}
\P_{(x_1, \ldots, x_d)}(\rho > n)  = \sum_{l=1}^d (-1)^{l+1} \P_{(x_1, \ldots, x_{l-1}, x_{l+1}, \ldots, x_d)}(\rho > n). 
\end{equation}
We focus on showing this is an antisymmetric polynomial in $x_1, \ldots, x_{d+1}$. The rest of the argument is same as the case when $d$ is even. 
Let $x_r$ denote $x$ with the $r$-th co-ordinate deleted and $x^{kl}_r$ denote $x$ with the $k$-th and $l$-th co-ordinates transposed before then deleting the $r$-th co-ordinate. 
For $x \in \mathbb{R}^d$ let
\[
F(x) = \sum_{r=1}^d (-1)^{r+1} \Pfaff(q_{x_i, x_j}(n))_{i, j \in [d] \setminus \{r\}}.
\]
Then
\begin{align*}
F(x) & = \sum_{r=1}^d (-1)^{r+1} \Pfaff(Q_{x_r}) \\
& = \sum_{r \neq k, l} (-1)^{r+1} \Pfaff(D_{kl} Q_{x_r} D_{kl}) \text{det}(D)^{-1}
+ (-1)^{k+1} \Pfaff(Q_{x_k}) + (-1)^{l+1} \Pfaff(Q_{x_l}) \\
& =  \sum_{r \neq k, l} (-1)^{r} \Pfaff(Q_{x_r^{kl}}) + (-1)^{k} \Pfaff(Q_{x_k^{kl}}) + (-1)^{l} \Pfaff(Q_{x_l^{kl}})\\
& = (-1) F(x^{kl}).
\end{align*}
The equality between lines 2 and 3 uses the conjugation formula to re-order the rows and column in the Pfaffian. 

%Therefore as before uniformly in $x \in \text{int}(W^d)$ with $x_d - x_1 = o(\sqrt{n})$ 
%\[
%\P_{x}(\rho > n) \sim \mathfrak{X} \Delta(x) n^{-d(d-1)/4}, \quad n \rightarrow \infty.
%\]
%To find $\mathfrak{X}$ we again consider the coefficient of $x_2 \ldots x_d^{d-1} n^{-d(d-1)/4}$. For this coefficient only the term with $l = 1$ in \eqref{d.odd.eq} contributes. Therefore we use the simplifications in \eqref{identify.constant} as before and find
%\begin{align*}
%\mathfrak{X} & =  \frac{1}{\pi^{d/2-1/2} \prod_{j=1}^{d-1} j!} D(d/2 - 1/2, -1/2) \\
%& = \frac{1}{\pi^{d/2-1/2} \prod_{j=1}^{d-1} j!} I(d/2 - 1/2, 1/2).
%\end{align*}
%In this case Eq 3.134 in \cite{forrester} gives $I(d/2 - 1/2, 1/2) = \pi^{-1/2} \prod_{j=1}^d \Gamma(j/2).$
%This gives the stated formula for $\mathfrak{X}$ in the case when $d$ is odd.  

We now consider the tail asymptotics for the ordering condition. 
We use part (ii) of Lemma \ref{coupling_lemma}, the above asymptotics for $\rho$ then part (i) of Lemma \ref{harmonic_simplify2} to obtain that as $n \rightarrow \infty$, uniformly for $x \in W^d$ with $x_d - x_1 = o(\sqrt{n})$,
\begin{align*}
\P_x(\tau > n ) \sim \E_x[\P_{x + \Psi}(\rho > n); A] & \sim \mathfrak{X} \E_x[\Delta(x + \Psi); A] n^{-d(d-1)/4} \\
& = \mathfrak{X} h(x) n^{-d(d-1)/4}. 
\end{align*}
The constant $\mathfrak{X}$ does not depend on the increment distribution~\cite{denisov_wachtel10} and therefore
agrees with the constant computed in the case of nearest-neighbour random walks, in particular (1.2) and (1.3) of~\cite{puchala_rolski_05}. The constant $\mathfrak{X}$ could also be found directly by analysing particular coefficients. 
\end{proof}

%
%
%\textcolor{blue}{
%\begin{remark}
%Another useful form for the case $d = 2$ can be found in terms of Bessel functions. 
%By \eqref{refl.principle} for $x_1 < x_2$,
%\begin{align*}
%\P_{(x_1, x_2)}(\rho \leq n) 
%& = 2\P_{(x_1, x_2)}(S_2(n) < S_1(n)).
%\end{align*}
%This is a one-dimensional problem for a random walk with the Laplace distribution which can be solved exactly (see for example the variance-gamma distribution). This leads to
%\[
%\P_{(x_1, x_2)}(\rho > n) = 2\int_0^{x_2 - x_1} \frac{1}{2^{n-1/2} \sqrt{\pi} \Gamma(n)} z^{n-1/2} K_{n - 1/2}(z) dz
%\]
%where $K_{n-1/2}(z)$ is a modified Bessel function of the second kind. This means that for general $d$ we can express $\P(\rho > n)$ as a Pfaffian of a matrix with entries given by special functions. 
%\end{remark}
%}

 \subsection{Proof of Theorem \ref{tail_asy} for $\lambda_1 < \ldots < \lambda_d$.} 
Let $\gamma = d \log(\bar{\lambda}/\lambda^*)$ where $\lambda^* = (\prod_{i=1}^d \lambda_i)^{1/d}$. 
By Proposition \ref{transition_density}, 
\[
\P_x(\tau > n)  = \int_{W^d} G_n^{(\lambda_1, \ldots, \lambda_d)}(x, z) dz.
\]
We first change variables $z_j \rightarrow n/\bar{\lambda} + z_j$ and then apply a change of measure
\begin{align}
\label{integral_expression}
\P_x(\tau > n) & = \int_{W^d} G_n^{(\lambda_1, \ldots, \lambda_d)}(x, n/\bar{\lambda} + z) dz \nonumber \\
& = \int_{W^d} \prod_{j=1}^d \left(\frac{\lambda_j}{\bar{\lambda}}\right)^n e^{-\sum_{i=1}^d (\lambda_i - \bar{\lambda})(n/\bar{\lambda} + z_i - x_i)} G_n^{(\bar{\lambda}, \ldots, \bar{\lambda})}(x, n/\bar{\lambda} + z) dz \nonumber \\
& = e^{-\gamma n}  \int_{W^d} e^{-\sum_{i=1}^d (\lambda_i - \bar{\lambda})( z_i - x_i)} G_n^{(\bar{\lambda}, \ldots, \bar{\lambda})}(x, n/\bar{\lambda} + z) dz.
\end{align}

We first consider the pointwise limit of the transition density.  
\begin{theorem}
\label{LLT}
Let $\lambda_1 = \ldots = \lambda_d = 1.$
For all $x, z \in \text{int}(W^d)$ and $x, z \in W^d$ respectively, uniformly in $x_d - x_1 = o(\sqrt{n}),$ $x_1 = o(\sqrt{n})$, $z_d - z_1 = o(\sqrt{n})$ and $z_1 = O(\sqrt{n})$, 
\begin{align*}
\widetilde{G}_n(x, n +z) & \sim \chi \Delta(x) \Delta(z) n^{-d^2/2} e^{- \frac{1}{2n}\sum_{j=1}^d z_j^2}, \quad n \rightarrow \infty, \\
G_n(x, n + z) & \sim\chi h(x) \hat{h}(z) n^{-d^2/2} e^{- \frac{1}{2n}\sum_{j=1}^d z_j^2}, \quad n \rightarrow \infty
\end{align*}
where $\chi = (2\pi)^{-d/2} \left(\prod_{j=1}^{d-1} j!\right)^{-1}.$
\end{theorem}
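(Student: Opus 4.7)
The plan is to establish the asymptotic for $\widetilde{G}_n(x, n+z) = \det(f_n(n+z_j - x_i))_{i,j=1}^d$ first, and then to lift it to $G_n(x, n+z)$ via the Gamma representation from Proposition \ref{transition_density}.

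For $\widetilde G_n$, the strategy is to combine Stirling's formula with Cauchy-Binet. Stirling gives the uniform expansion, for $u = o(\sqrt n)$,
\[f_n(n + u) = \frac{1}{\sqrt{2\pi n}}\exp\Bigl(-\frac{u^2}{2n}\Bigr)(1 + o(1)).\]
Substituting $u = z_j - x_i$ and factoring the exponent as $(z_j - x_i)^2 = z_j^2 - 2 x_i z_j + x_i^2$, the row-only and column-only Gaussian pieces pull out of the determinant. Under the hypothesis $x_d - x_1 = o(\sqrt n)$ and $z_d - z_1 = o(\sqrt n)$ (one may assume $|x_i|, |z_j| = o(\sqrt n)$ by translation invariance, which both sides share), these prefactors tend to $1$ and leave $(2\pi n)^{-d/2} \det(e^{x_i z_j/n})$. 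Expanding $e^{x_i z_j/n} = \sum_{k \geq 0}(x_i z_j)^k/(n^k k!)$ and applying Cauchy-Binet to the induced factorization, the leading contribution comes from the exponent set $\{0, 1, \ldots, d-1\}$ and equals $\Delta(x)\Delta(z)/(\prod_{k=0}^{d-1} k! \cdot n^{d(d-1)/2})$; all other $d$-subsets of $\mathbb{N}_0$ contribute subleading orders in $n^{-1}$. Combining yields the claimed asymptotic with $\chi = (2\pi)^{-d/2}/\prod_{k=1}^{d-1} k!$. A second route is via Lemma \ref{lem:g.via.characteristic}: rescaling $\theta_j = \phi_j/\sqrt n$, one observes $\prod (e^{-i\theta_j}\varphi(\theta_j))^n \to e^{-\sum \phi_j^2/2}$ and the two exponential determinants contribute factors $i^{d(d-1)/2}\Delta(\phi)\Delta(x) n^{-d(d-1)/4}/\prod k!$ and its conjugate analogue, reducing the computation to the GUE Selberg integral $\int_{W^d} \Delta(\phi)^2 e^{-\sum \phi_j^2/2}d\phi = (2\pi)^{d/2}\prod_{j=1}^{d-1} j!$, which pins down the same constant $\chi$.

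For $G_n$, Proposition \ref{transition_density} gives
\[G_n(x, n+z) = \E\bigl[\det(f_{n-d}(n + z_j - \chi_j - x_i - \eta_i))_{i,j=1}^d\bigr].\]
With $n$ replaced by $n-d$ and $(x, z)$ shifted to $(x+\eta, z - \chi + d \cdot \mathbf{1})$, the preceding $\widetilde G$ analysis applies pointwise in $(\eta, \chi)$ and, using the translation-invariance of $\Delta$, yields the integrand asymptotic $\chi \Delta(x+\eta)\Delta(z-\chi) n^{-d^2/2}$. The uniform bound of Lemma \ref{lem:pre.conc2}, namely $\widetilde G_m(x', y') \leq C_d \Delta(x')\Delta(y')/m^{d^2/2}$, supplies the envelope $C_d \Delta(x+\eta)\Delta(z-\chi)/n^{d^2/2}$, which together with the finite moments of the Gamma variables $\eta_i, \chi_j$ justifies dominated convergence. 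Hence $G_n(x, n+z) \sim \chi \E[\Delta(x+\eta)]\E[\Delta(z-\chi)] n^{-d^2/2}$. The first expectation equals $h(x)$ by definition, and the second equals $\hat h(z)$: using the distributional identity $\chi_j \stackrel{d}{=} \eta_{d-j+1}$ and relabeling, one obtains $\E[\Delta(z - \chi)] = \E[\Delta(-z_d + \eta_1, \ldots, -z_1 + \eta_d)] = h(-z_d, \ldots, -z_1) = \hat h(z)$, where the sign $(-1)^{d(d-1)/2}$ produced by reversing the vector cancels the sign from the substitution $z \mapsto -z$ inside $\Delta$.

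The main obstacle is securing the Stirling expansion uniformly over the regime $x_d - x_1, z_d - z_1 = o(\sqrt n)$, so that the Stirling error (of order $|u|^3/n^2$) and the higher Cauchy-Binet terms (from index subsets $S \neq \{0,\ldots,d-1\}$) remain negligible uniformly in $x, z$. For the Fourier route, the parallel obstacle is truncating the oscillatory integral outside a growing ball and controlling the tails of $|\varphi(\phi/\sqrt n)|^n \le (1 + \phi^2/n)^{-n/2}$; the estimate in the proof of Lemma \ref{lem:pre.conc2} already handles this. Once the uniform $\widetilde G_n$ asymptotic is in hand, the transfer to $G_n$ via dominated convergence is comparatively routine.
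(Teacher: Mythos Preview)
Your first route (Stirling plus Cauchy--Binet) has a real gap, and it is precisely the difficulty the paper singles out. Writing $f_n(n+u)=\frac{1}{\sqrt{2\pi n}}e^{-u^2/(2n)}(1+o(1))$ gives only a multiplicative $(1+o(1))$ on each entry; after pulling the row and column Gaussian factors out you are left with $\det\bigl(e^{x_i z_j/n}(1+\epsilon_{ij})\bigr)$ where $\epsilon_{ij}=O(u_{ij}^3/n^2)=o(n^{-1/2})$. But $\det(e^{x_i z_j/n})$ itself is only of order $n^{-d(d-1)/2}$, while a single cofactor (a $(d-1)\times(d-1)$ minor of the same type) is of order $n^{-(d-1)(d-2)/2}$, so the error from replacing one entry is of order $\epsilon_{ij}\cdot n^{-(d-1)(d-2)/2}$, which dominates the main term as soon as $d\ge 2$. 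The paper handles this by truncating both the logarithm and the exponential to polynomials of fixed degree $M$, so that the determinant becomes an explicit polynomial in $(x,z)$; antisymmetry then forces $\Delta(x)\Delta(z)$ to divide it, and one reads off the leading coefficient directly. Your sketch does not supply an argument of this kind, and the obstacle you flag at the end is not addressed.

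Your second route, via Lemma~\ref{lem:g.via.characteristic}, is correct and is a genuinely different proof from the paper's. After rescaling $\theta=\phi/\sqrt n$, Lemma~\ref{lem:exp.vandermonde} gives $\lvert\det(e^{\pm i\phi_i w_j/\sqrt n})\rvert\le C_d\,\Delta(\phi)\,\lvert\Delta(w)\rvert\,n^{-d(d-1)/4}$ for $w=x$ or $w=z$, and the tail bound on $|\varphi(\phi/\sqrt n)|^n$ used in the proof of Lemma~\ref{lem:pre.conc2} yields an integrable envelope over $W^d$. The pointwise limit (for each fixed $\phi$, using $|x_j|,|z_j|=o(\sqrt n)$ after centring) is the standard HCIZ-type expansion $\det(e^{a_ib_j})\sim\Delta(a)\Delta(b)/\prod_{k=0}^{d-1}k!$, and the resulting integral is the Mehta/GUE normalisation $\int_{W^d}\Delta(\phi)^2e^{-\sum\phi_j^2/2}\,d\phi$. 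This route avoids the polynomial-truncation step entirely; the price is that the constant comes out of a Selberg-type evaluation rather than a single monomial comparison, but this is routine.

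For $G_n$ your argument via the Gamma representation in Proposition~\ref{transition_density} and dominated convergence with Lemma~\ref{lem:pre.conc2} is essentially the same as the paper's, which phrases the same passage through the coupling variables $(\Psi,\Phi)$ and Proposition~\ref{prop.uniform.bounds}. One small point: the envelope from Lemma~\ref{lem:pre.conc2} is in terms of $|\Delta(x+\eta)|\,|\Delta(z-\chi)|$, since $x+\eta$ need not lie in $W^d$; this is harmless for integrability but should be stated with absolute values.
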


%\textcolor{blue}{
%Consider \begin{align*}
%\widetilde{G}_n(x, n +z)
% & = \frac{n^{nd}}{(n!)^d}\det((1 + z_j/n - x_i/n)^{n-1} e^{-(n + z_j - x_i)})_{i, j= 1}^d \\
% & =  \frac{n^{nd}}{(n!)^d}\det(e^{(n-1)\log (1+z_j/n - x_i/n)} e^{-(n + z_j - x_i)})_{i, j= 1}^d.
% \end{align*}
% A heuristic argument would be to just use the leading terms in the Taylor expansion of the logarithm 
%and apply the Harish-Chandra-Itzykson-Zuber integral which would give the asymptotics
% \begin{align*} 
%\det(e^{x_i z_j/\sqrt{n}})_{i, j = 1}^d
%& \sim C n^{-d(d-1)/2-d/2}  \Delta(x) \Delta(z).
%\end{align*}
%for some constant $C$. However, the result is $O(n^{-d(d-1)/2-d/2})$ so that we cannot
%immediately justify discarding terms in the determinant. The following argument gives a justification 
%of the idea above. 
%}

 \begin{proof}[Proof of Theorem \ref{LLT}]
 The transition density can be expressed for $x, z \in W^d$ and $n + z_1 \geq x_1, \ldots, n+z_d \geq x_1$ as
\begin{align*}
  \widetilde{G}_n(x, n +z) & = e^{-\sum_{j=1}^d (n + z_j - x_j)} \det\left(\frac{(n + z_j - x_i)^{n-1}}{(n-1)!}\right)_{i, j = 1}^d \\
  & = \frac{n^{nd} e^{-nd}}{(n!)^d} e^{-\sum_{j=1}^d (z_j - x_j)} \det\left(e^{(n-1)\log(1 + \frac{z_j - x_i}{n})}\right)_{i, j = 1}^d.
 \end{align*}
 Let
 % $e_M = \sum_{j = 0}^M x^j/j!$ and 
  $L_M = -\sum_{j=1}^M  (-1)^j x^j/j$.
We truncate the Taylor series of the logarithm to obtain that for any $\alpha > 0$ we can choose $M$ large enough such that
 \begin{align*}
 & \widetilde{G}_n(x, n +z) = \frac{n^{nd}e^{-nd}}{(n!)^d}\det(e^{(n-1) L_M (z_j/n - x_i/n)-(z_j - x_i)}))_{i, j= 1}^d
 + O(n^{-\alpha})
% & \sim (2\pi n)^{-d/2} \det\left(\sum_{p=0}^M \frac{1}{p!} \left[(n-1)\sum_{k=2}^M \frac{(x_i - z_j)^k}{k n^k} + \frac{x_i - z_j}{n^2} \right]^p\right)_{i, j= 1}^d + O(n^{-\alpha}). 
 \end{align*}
 The terms which only depend on the index of either the row or the column can be brought outside of the determinant as prefactors. 
 Therefore since $z_1, \ldots, z_d = O(n^{1/2})$ and $x_1, \ldots, x_d = o(n^{1/2})$ \[
  \widetilde{G}_n(x, n +z) = \frac{n^{nd}e^{-nd}}{(n!)^d} e^{-\frac{1}{2n}\sum_{j=1}^d z_j^2 + o(1)}
  \det\left(e^{\frac{x_i z_j}{n}(1 + O(n^{-1/2}))}\right)_{i, j= 1}^d
 + O(n^{-\alpha}).
 \]
 It is known that for $z_1, \ldots, z_d = O(n^{1/2})$ and $x_1, \ldots, x_d = o(n^{1/2})$
 \[
   \det\left(e^{\frac{x_i z_j}{n}(1 + O(n^{-1/2}))}\right)_{i, j= 1}^d \sim \frac{1}{\prod_{j=1}^{d-1} j!} n^{-d(d-1)/2} \Delta(x) \Delta(z), \quad n \rightarrow \infty.
 \]
For example, this follows from Equation 3.4 in \cite{shatashvili_93} and noting that the integral in that equation
converges to 1. 
Therefore 
\begin{equation}
\label{llt_rho}
\widetilde{G}_n(x, n +z) \sim \chi \Delta(x) \Delta(z) n^{-d^2/2} e^{- \frac{1}{2n}\sum_{j=1}^d z_j^2}, \quad n \rightarrow \infty. 
\end{equation}

% 
%The purpose of this is that the determinant on the right hand side is a polynomial in $x_1, \ldots, x_d, z_1, \ldots, z_d.$
%Moreover, it vanishes when $x_j = x_i$ and $z_j = z_i$ for any $i \neq j$. 
%Therefore there exists a polynomial $Q_n(x, z)$ with coefficients depending on $n$ such that
%\[
%\widetilde{G}_n(x, n +z) \sim \Delta\left(\frac{x}{\sqrt{n}}\right) \Delta\left(\frac{z}{\sqrt{n}}\right) Q_n\left(\frac{x}{\sqrt{n}}, \frac{z}{\sqrt{n}}\right) e^{- \frac{d z_1^2}{2n}} + O(n^{-\alpha}).
%\]
%The reason for choosing the scaling $x/\sqrt{n}$ and $z/\sqrt{n}$ is that the coefficients in $Q_n$ are uniformly bounded in $n$. We can find the coefficient of $\prod_i z_i^0 x_i^0$ to be 
%$\sim (2\pi n)^{-d/2} \left(\prod_{j=1}^{d-1} j!\right)^{-1}$ by comparing the coefficient of a particular term such as 
%$z_1^0 z_2^1 \ldots z_d^{d-1} x_1^0 x_2^1 \ldots x_d^{d-1}$. Therefore 
%\begin{equation}
%\label{llt_rho}
%\widetilde{G}_n(x, n +z) \sim \chi \Delta(x) \Delta(z) n^{-d^2/2} e^{- \frac{d z_1^2}{2n}}, \quad n \rightarrow \infty. 
%\end{equation}
% 

%A difficulty is that we have discarded all terms of $O(1/n)$ in the third line but due to cancellations in the determinant we end up with an answer which is $O(n^{-d(d-1)/4})$ so that this does not immediately appear to be valid. 
%However, I think that all of the lower order terms will not help the cancellation in the determinant.
We can then extend to the ordered case using the coupling from Section \ref{coupling}.
Lemma \ref{coupling_lemma} part (i) states that
\[ G_n(x, n +z) 
 = \E[\widetilde{G}_{n-d-1}(x +\Psi, n + z - \Phi); A, B]
\]
where $\Psi, \Phi, A, B$ are all defined in Section \ref{coupling}. 
Therefore from \eqref{llt_rho} and interchanging the limit and expectations using Lemma
\ref{lem.large.deviations}
\begin{equation}
\label{LLT1}
G_n(x, n +z) \sim  \chi n^{-d^2/2} e^{-\frac{1}{2n}\sum_{j=1}^d z_j^2} \E[\Delta(x + \Psi) \Delta(z - \Phi); A, B].
\end{equation}
As remarked in (and using notation from) Section \ref{coupling} the definition of $\Phi$ and event $B$ 
correspond to 
the definition of $\Psi$ and event $A$ with the choices that 
$x_j = -z_{d+1-j}$ along with $V_j^i = U_{d+1-j}^i$ and 
$\Psi_j = \Phi_{d+1-j}$ for $j =1, \ldots, d$. Therefore Lemma \ref{harmonic_simplify2} shows that
\begin{align}
\label{h_hat_eqn}
 \E[ \Delta(z_1 - \Phi_1, \ldots, z_d - \Phi_d); B]  & = \E[\Delta(-z_d+\Phi_d, \ldots, -z_1+\Phi_1 ; A] \nonumber \\
& =  h(-z_d, \ldots, -z_1) \nonumber \\
& = \hat{h}(z_1, \ldots, z_d). 
\end{align} 
Lemma \ref{harmonic_simplify2} can also be applied to simplify $\E[\Delta(x + \Psi); A] = h(x)$. 
Therefore \eqref{LLT1} simplifies to 
\[
G_n(x, n +z) \sim  \chi n^{-d^2/2} e^{-\frac{1}{2n}\sum_{j=1}^d z_j^2} h(x) \hat{h}(z), \quad n \rightarrow \infty. \qedhere
\]
%Recall the definitions of $\Phi$ 
% by using the coupling in Section and
% interchanging limits using Proposition \ref{prop.uniform.bounds}.
% Let $(\gamma_j^i : i+j \leq d)$ be an independent collection of exponential random variables with rate $1$.  
% Let $(U_j^i : i+j  \leq d)$ be defined inductively by $U_j^0 := 0$ and $U_j^i = U_j^{i-1} + \gamma_j^i.$ 
%Let $B$ denote the event that $z_j - U_j^i \leq z_{j+1} - U_{j+1}^{i}$ for all $i+ j < d$. 
%Let $\Phi = (U_1^{d-1}, \ldots, U_{d-1}^1, 0)$. 
%If we reverse signs then the series of inequalities become
%$-z_{j+1} + U_{j+1}^i \leq -z_j + U_j^i$.  
%These inequalities correspond to 
%the event $A$ in Lemma \ref{harmonic_simplify2} with the choices that 
%$x_j = -z_{d+1-j}$ along with $V_j^i = U_{d+1-j}^i$ and 
%$\Psi_j = \Phi_{d+1-j}$ for $j =1, \ldots, d$. Thus Lemma \ref{harmonic_simplify2} shows that
%\begin{align*}
%& \E[ \Delta(z_1 - \Phi_1, \ldots, z_d - \Phi_d); B]  = \E[\Delta(-z_d+\Phi_d, \ldots, -z_1+\Phi_1 ; A] \\
%& \quad =  h(-z_d, \ldots, -z_1) = \hat{h}(z_1, \ldots, z_d). 
%\end{align*} 
%This allows us to extend the local limit theorem to the case with $\tau$ rather than $\rho$. 
% For all $x \in W^d$ as $n \rightarrow \infty$, using Proposition \ref{prop.uniform.bounds}
%\begin{align*}
%G_n(x, n +z) 
%& = \E[G_{n-d}(x, +\Psi, n-d + z - \Phi); A, B] \\
%%& = \E[\P_{x + \Psi}(S_{n-d} - (n-d) + \Phi \in dz); A, B]   \\
%& \sim \chi n^{-d(d-1)/2-d/2} \E[\Delta(x + \Psi) \Delta(z - \Phi); A, B] \\
%& = \chi h(x) \hat{h}(z) n^{-d(d-1)/2-d/2}. \qedhere
%\end{align*}
 \end{proof}

\begin{proof}[Proof of Theorem \ref{tail_asy} part (iii)]
Recall \eqref{integral_expression},
\begin{equation}
\label{drift_integral}
\P_x(\tau > n) = e^{-\gamma n}  \int_{W^d} e^{-\sum_{i=1}^d (\lambda_i - \bar{\lambda})( z_i - x_i)} G_n^{(\bar{\lambda}, \ldots, \bar{\lambda})}(x, n/\bar{\lambda} + z) dz.
\end{equation}
We change variables $r_1 = z_2 - z_1, \ldots, r_{d-1} = z_d - z_{d-1}$ and 
$\theta = \frac{1}{\sqrt{n} d} \sum_{j=1}^d z_j$. 
Use that 
\begin{align}
\label{xi_eqn}
\sum_{i=1}^d z_i (\bar{\lambda} - \lambda_i) & = 
\frac{1}{d} \sum_{1 \leq i < j \leq d} (z_j - z_i)(\lambda_i - \lambda_j)  \nonumber \\
& = \frac{1}{d} \sum_{1 \leq i < j \leq d} (r_i + r_{i+1} + \ldots + r_{j-1}) (\lambda_i - \lambda_j).
\end{align}
Let $r = (r_1, \ldots, r_{d-1})$ and define 
\[
H(r) = \E \left( \prod_{1 \leq i < j \leq d} (r_i + \ldots + r_{j-1} + \eta_{d-i+1} - \eta_{d-j+1}) \right).
\]
In a similar way to \eqref{h_hat_eqn}, 
\begin{align*}
\hat{h}(z_1, \ldots, z_d) %& =  \E \Delta(-z_d + \eta_1, \ldots, -z_1 + \eta_d) \\
%& = 
%\E[ \prod_{1 \leq i < j \leq d} (-z_{d-j+1} + \eta_j + z_{d-i+1} - \eta_i)] \\
& = \E[ \prod_{1 \leq i < j \leq d} (z_{j} - \eta_{d-j+1} - z_{i} + \eta_{d-i+1})]\\
& = \E[ \prod_{1 \leq i < j \leq d}  (r_i + r_{i+1} + \ldots + r_{j-1} + \eta_{d-i+1} - \eta_{d-j+1})].
\end{align*}
We use Lemma \ref{lem.large.deviations} to justify interchanging limits in \eqref{drift_integral} after the change of variables above. 
First note that \eqref{xi_eqn} gives exponential decay in $r_1, \ldots, r_{d-1}$ for $r_1 > 0, \ldots, r_{d-1} > 0$ and dominates the polynomial factors in Lemma \ref{lem.large.deviations}. 
Then note that the second statement in part (i) of Lemma \ref{lem.large.deviations} gives the required decay in $\theta$.
After interchanging limits we then use the asymptotics in Theorem \ref{LLT}. Note that 
$\frac{1}{n} \sum_{j=1}^d z_j^2 = d \theta^2 + o(1).$ 
Therefore
\begin{align*}
&\P(\tau > n) \sim \chi  n^{-d^2/2 + 1/2} e^{-\gamma n} e^{\sum_{i=1}^d (\lambda_i - \bar{\lambda}) x_i}h(x) \\
& \int_{-\infty}^{\infty} d\theta \int_{0}^{\infty} d\xi_1 \ldots \int_0^{\infty} d\xi_{d-1} 
e^{\frac{1}{d} \sum_{1 \leq i < j \leq d} (r_i + r_{i+1} + \ldots + r_{j-1}) (\lambda_i - \lambda_j)} H(r) e^{-d\theta^2/2}.
\end{align*}
After performing the integral in $\theta$ we have the stated asymptotics for $\tau$ with 
$c_d = (2\pi)^{-d/2+1/2} (\prod_{j=1}^{d-1} j!)^{-1} d^{-1/2}$ and
\begin{align}
\label{factor1}
K_{\lambda} & = c_d \int_{0}^{\infty} dr_1 \ldots \int_0^{\infty} dr_{d-1} 
e^{\frac{1}{d} \sum_{1 \leq i < j \leq d} (r_i + r_{i+1} + \ldots + r_{j-1}) (\lambda_i - \lambda_j)} H(r).
\end{align}
%The bounds in Lemma \ref{prop.uniform.bounds} and the local limit theorem in Theorem \ref{LLT} now 
%prove that uniformly for $x \in W^d$ with $x_d - x_1 = o(\sqrt{n})$,
%\[
%\P_x(\tau > n) \sim   K_{\lambda}n^{-d(d-1)/2 - d/2} e^{-\gamma n} e^{\sum_{i=1}^d ( \lambda_i- \bar{\lambda} ) x_i} h^{(\bar{\lambda}, \ldots, \bar{\lambda})}(x), 
%\quad n \rightarrow \infty
%\]
%with \[
%K_{\lambda}  = \chi \int_{W^d} e^{-\sum_{j=1}^d (\lambda_j - \bar{\lambda} )z_j} \hat{h}^{(\bar{\lambda}, \ldots, \bar{\lambda})}(z) dz_1 \ldots dz_d. 
%\]
%The integral is finite due to the condition $\lambda_1 < \ldots < \lambda_d$.
Let \[
\widetilde{H}(r) = \prod_{1 \leq i < j \leq d} (r_i + \ldots + r_{j-1}).
\]
The same argument also gives the stated tail asymptotics for $\rho$ with constant factor 
\begin{align}
\label{factor2}
C_{\lambda} & = c_d \int_{0}^{\infty} dr_1 \ldots \int_0^{\infty} dr_{d-1} 
e^{\frac{1}{d} \sum_{1 \leq i < j \leq d} (r_i + r_{i+1} + \ldots + r_{j-1}) (\lambda_i - \lambda_j)} \widetilde{H}(r).
\end{align}
\end{proof}

\section{The smallest and largest particles} 
\label{sec.largest.particle}

 In this section our aim is to find the distribution of the smallest and largest particles when $(Z(n))_{n \geq 0}$ has general starting positions. 
Suppose that $\lambda_1 = \ldots = \lambda_d = 1$ and let $x = (x_1, \ldots, x_d)$ and $z = (z_1, \ldots, z_d)$. Then applying the $h$-transform from Theorem \ref{harmonic} to Proposition \ref{transition_density} gives
\begin{equation*}
\P_x(Z(n) \in dz) = e^{-\sum_{j=1}^d (z_j - x_j)}  \det(q_{n+i-j}(z_j - x_i))_{i, j = 1}^d \frac{h(z)}{h(x)} dz, \quad x, z \in W^d.
\end{equation*}

%
%For any $\eta > 0$ let $I^{\eta} = \{{0 \leq z_1 \leq \ldots \leq z_d \leq \eta} \}$ and
%$I_{\eta} = \{\eta \leq z_1 \leq \ldots \leq z_d\}$. 
%In the fixed time case we have
%\begin{align*}
%\P_x(Z_d(n) \leq \eta) & = 
%\int_{\upp} \Delta(z)  \det(q_n^{(d-i)}(z_j - x_i))_{i, j = 1}^d 
%e^{-\sum_{i=1}^d (z_i-x_i)} dz_1 \ldots dz_d \\
%\P_x(Z_1(n) \geq \eta) & = \int_{\low} \Delta(z)  \det(q_n^{(1-i)}(z_j - x_i))_{i, j = 1}^d 
%e^{-\sum_{i=1}^d (z_i-x_i)} dz_1 \ldots dz_d. 
%\end{align*}

\begin{proof}[Proof of Theorem \ref{largest_particle}]
For any $a \in \mathbb{R}$ let $I^{a} = \{{x_1 \leq \ldots \leq x_d \leq a} \}$ and
$I_{a} = \{a \leq x_1 \leq \ldots \leq x_d\}$. 
We will use the representation $h_2$ for the harmonic function from Section \ref{harmonic_properties}. 
All of the matrices defined in the determinants in this proof are indexed by $i, j = 1, \ldots, d$ and we omit this from the notation.

Proposition \ref{transition_density} and Theorem \ref{harmonic} give that 
\begin{align*}
& \P_x(Z_d(n_1) \leq \xi_1, \ldots, Z_d(n_m) \leq \xi_m) \\
& =  \frac{e^{\sum_i \lambda_i x_i^0}}{h(x^0)}  \int_{\uppone} \ldots \int_{\uppm} \det(q_{n_1+i-j}(x_j^1 - x_i^0))\det(q_{n_2-n_1+i-j}(x_j^2 - x_i^1)) \\
& \ldots \det(q_{n_m-n_{m-1}+i-j}(x_j^m - x_i^{m-1})) \text{det}((-1)^{d-j}\phi_i^{(d-j)}(x_j^m))  \prod_{k=1}^m \prod_{j=1}^d dx_j^k 
\end{align*}
where $x^0 : = x$.
The main problem which prevents us immediately applying the Eynard-Mehta theorem is the dependence on $i$ and $j$ in the functions such as $q_{n_2 - n_1 +i-j}$ appearing in the determinants. 
We use the integral and derivative relations \eqref{derivative_relations} and \eqref{integral_relations}
to remove this dependency on $i$ and $j$. 

We start with smooth approximations $q_k^{(\epsilon)}$ of the functions appearing above before passing to a limit. 
We integrate by parts for $k = 1, \ldots, m$ in the order $x_1^k, x_2^k, \ldots, x_{d-1}^k$ then $x_1^k,  \ldots, x_{d-2}^k$ and so on until finally $x_1^k.$ This ensures that there are no boundary conditions due to the determinants having equal rows or columns at each boundary as in Lemma 2 of \cite{FW}. 
The limit as $\epsilon \rightarrow 0$ can then be taken in a similar way to Lemma 5 from \cite{FW}. We give more details in Section \ref{ibp_justify}. The condition $n_1 \geq d-1$ is needed to justify taking this limit.  
Therefore
\begin{align*}
&  \P_x(Z_d(n_1) \leq \xi_1, \ldots, Z_d(n_m) \leq \xi_m)\\
& =  \frac{e^{\sum_i \lambda_i x_i^0}}{h(x^0)} \int_{\uppone} \ldots \int_{\uppm} \det(q_{n_1+i-d}(x_j^1 - x_i^0))\det(q_{n_2-n_1}(x_j^2 - x_i^1)) \\
& \ldots \det(q_{n_m-n_{m-1}}(x_j^m - x_i^{m-1})) \text{det}(\phi_i(x_j^m))  \prod_{k=1}^m \prod_{j=1}^d dx_j^k .
\end{align*}
%\begin{align*}
%\P_x(Z_d(n) \leq z) &= e^{\sum_i \lambda_i x_i} \int_{\upp} \det(\phi_i^{(d-j)}(z_j))   \det(q_n^{(j-i)}(z_j - x_i)) dz_1 \ldots dz_d \\
%& = e^{\sum_i \lambda_i x_i} \int_{\upp} \det(\phi_i(z_j))   \det(q_n^{(d-i)}(z_j - x_i)) 
%dz_1 \ldots dz_d. 
%\end{align*}

%The distribution of $Z(n)$ is continuous in the $\lambda_i$ and so after taking the limit $\lambda_i \rightarrow 1$,
%\begin{align*}
%& \P_x(Z_d(n) \leq z, Z_d(m+n) \leq \xi) \\
%& =  \int_{\uppone} \ldots \int_{\uppm} e^{-\sum_{i=1}^d (z_i-x_i)}  \det(q_n^{(d-i)}(y_j - x_i))\det(q_m(z_j - y_i)) \Delta(z) dy dz \\
%& =   \int_{\uppone} \ldots \int_{\uppm}  \det(f_{n-d+i}(y_j - x_i))\det(f_m(z_j - y_i)) \Delta(z) dy dz.
%\end{align*}
%It is convenient to rewrite in terms of the probability density function of a Gamma$(n, 1)$ random variables denoted $f_n$. Then for each $n \geq k,$ $q_n^{k}(x)e^{-x} = f_{n-k}(x)$.  
Rewriting in terms of $f_n$ we have
\begin{align*}
&  \P_x(Z_d(n_1) \leq \xi_1, \ldots, Z_d(n_m) \leq \xi_m) \\
& = \frac{1}{h(x^0)}\int_{I^{\xi_1}} \ldots \int_{I^{\xi_m}}
\det(f_{n_1-d+i}(x_j^1 - x_i^0))\det(f_{n_2 - n_1}(x_j^2 - x_i^1)) \\
&  \ldots \det(f_{n_m - n_{m-1}}(x_j^m - x_i^{m-1}))  \Delta(x^m)  \prod_{k=1}^m \prod_{j=1}^d dx_j^k. 
\end{align*}
From the Eynard-Mehta theorem the right hand side is given by a Fredholm determinant
with the stated extended kernel eg. \cite{johansson_survey, tracy_widom}. The fact that $A$ is an invertible matrix can be seen as follows. For each $j = 1, \ldots, d$ define independent random variables $\xi_{n-d}^{(j)} \sim \text{Gamma}(n-d, 1)$. Then
\begin{align*}
\det(A) & = \E[ \det((x_k + \eta_{n-d+k})^{l-1})_{k, l = 1}^d] \\
& = \E^{(\eta)} \E^{(\xi)} \Delta(x_1 + \eta_1 + \xi_{n-d}^{(1)}, \ldots, x_d + \eta_d + \xi_{n-d}^{(d)}). 
\end{align*}
The Vandermonde determinant is harmonic for an increment with 
distribution $(\xi_{n-d}^{(1)}, \ldots,  \xi_{n-d}^{(d)})$ by Corollary 2.2 of \cite{konig2002}. 
Therefore $\det(A) = h(x) > 0.$

For the distribution of the smallest particle the same argument shows that
\begin{align*}
& \P_x(Z_1(n_1) \geq \xi_1, \ldots, Z_1(n_m) \geq \xi_m) \\
& =  \frac{e^{\sum_i \lambda_i x_i^0}}{h(x^0)}  \int_{\lowone} \ldots \int_{\lowm} \det(q_{n_1+i-j}(x_j^1 - x_i^0))\det(q_{n_2-n_1+i-j}(x_j^2 - x_i^1)) \\
& \ldots \det(q_{n_m-n_{m-1}+i-j}(x_j^m - x_i^{m-1})) \text{det}((-1)^{d-j}\phi_i^{(d-j)}(x_j^m))  \prod_{k=1}^m \prod_{j=1}^d dx_j^k. 
\end{align*}
Again we start with a smooth approximation, apply an integration by parts and then take a limit. This time we need to integrate by parts for $k = 1, \ldots, m$ in the order $x_d^k, x_{d-1}^k, \ldots, x_1^k, x_{d}^k, \ldots, x_2^k, \ldots, x_d^k$ which ensures there are no boundary conditions. This requires the condition $n_m - n_{m-1} \geq d-1$, see Section \ref{ibp_justify}.
Therefore 
\begin{align*}
&  \P_x(Z_1(n_1) \geq \xi_1, \ldots, Z_1(n_m) \geq \xi_m)\\
& =  \frac{e^{\sum_i \lambda_i x_i^0}}{h(x^0)}  \int_{\lowone} \ldots \int_{\lowm} \det(q_{n_1+i-1}(x_j^1 - x_i^0))\det(q_{n_2-n_1}(x_j^2 - x_i^1)) \\
& \ldots \det(q_{n_m-n_{m-1}}(x_j^m - x_i^{m-1})) \text{det}(\phi_i^{(d-1)}(x_j^m))  \prod_{k=1}^m \prod_{j=1}^d dx_j^k .
\end{align*}
We use the reduction that $ \text{det}(\phi_i^{(d-1)}(z_j)) = e^{-\sum_{j=1}^d z_j}\Delta(z)$.  
Therefore
\begin{align*}
&  \P_x(Z_1(n_1) \geq \xi_1, \ldots, Z_1(n_m) \geq \xi_m) \\
& = \frac{1}{h(x^0)}\int_{I_{\xi_1}} \ldots \int_{I_{\xi_m}}
\det(f_{n_1-1+i}(x_j^1 - x_i^0))\det(f_{n_2 - n_1}(x_j^2 - x_i^1)) \\
&  \ldots \det(f_{n_m - n_{m-1}}(x_j^m - x_i^{m-1}))  \Delta(x^m)  \prod_{k=1}^m \prod_{j=1}^d dx_j^k. 
\end{align*}
The stated formula now follows from the Eynard-Mehta theorem. The argument used for $A$ also shows that $B$ is invertible. 
%
%  \begin{align*}
%\P_x(Z_1(n) \geq \eta)
%& =  e^{\sum_i \lambda_i x_i} \int_{\low} \det(\phi_i^{(d-j)}(z_j))   \det(q_n^{(j-i)}(z_j - x_i)) dz_1 \ldots dz_d.
%\end{align*}
%Again we apply an integration by parts lemma but this time need to integrate by parts in the order $z_d, z_{d-1}, \ldots, z_1, z_{d}, \ldots, z_2, \ldots, z_d$ which ensures there are no boundary conditions. Therefore
%\[
%\P_x(Z_1(n) \geq \eta)  = e^{\sum_{i=1}^d x_i} \int_{\low} \det(\phi_i^{(d-1)}(z_j))  \det(q_n^{(1-i)}(z_j - x_i))
% dz_1 \ldots dz_d. 
%\]
%Then $\phi_i^{(d-1)}(z) = \lambda^{d-1}_ie^{-\lambda_i z}$ and after taking a limit $\lambda_i \rightarrow 1$  
%\[
%\P_x(Z_1(n) \geq \eta)  = \int_{\low} \Delta(z)  \det(q_n^{(1-i)}(z_j - x_i))
%e^{-\sum_{i=1}^d (z_i-x_i)} dz_1 \ldots dz_d. 
%\]
%In the case $x_i = 0$ for $i = 1, \ldots, d$ we equality in distribution to the smallest eigenvalue of LUE with $a = n-1$,
%\[
%\P_x(Z_1(n) \geq \eta)  = \int_{\low} \prod_{i = 1}^d z_i^{n-1} \Delta(z)^2 e^{-\sum_{i=1}^d z_i} dz_1 \ldots dz_d. 
%\] 
\end{proof}

\subsection{Integration by parts} 
\label{ibp_justify}
Let $q_n^{(\epsilon)}$ be the smooth approximations defined in the proof of Theorem \ref{largest_particle}.
%satisfying that 
%$\tilde{q}_n(x) = 0$ for $x \leq 0$, 
%that
%$\tilde{q}_n(x) = q_n(x)$ for $x \geq \epsilon$ and that $\tilde{q}_n$ is uniformly bounded for all $x \in [0, 1]$. 
%As $\tilde{q}_n$ is smooth then for $k > 0$ we define $\tilde{q}_n^{(k)}$ as the $k$-th derivative of $\tilde{q}_n$ and
%for $k < 0$ we define $\tilde{q}_n^{(-k)}(x) = \int_0^x \frac{(x-u)^{k-1}}{(k-1)!} \tilde{q}(u) du$.
%\begin{align}
%\label{ibp_smooth}
%& \int_{\uppone} \ldots \int_{\uppm} \det(\tilde{q}_{n_1}^{(j-i)}(x_j^1 - x_i^0))\det(\tilde{q}_{n_2-n_1}^{(j-i)}(x_j^2 - x_i^1))\nonumber  \\
%& \ldots \det(\tilde{q}_{n_m-n_{m-1}}^{(j-i)}(x_j^m - x_i^{m-1})) \text{det}(\phi_i^{(d-j)}(x_j^m))  \prod_{k=1}^m \prod_{j=1}^d dx_j^k \nonumber \\
%& = 
%\int_{\uppone} \ldots \int_{\uppm} \det(\tilde{q}_{n_1}^{(d-i)}(x_j^1 - x_i^0))\det(\tilde{q}_{n_2-n_1}(x_j^2 - x_i^1)) \nonumber \\
%& \ldots \det(\tilde{q}_{n_m-n_{m-1}}(x_j^m - x_i^{m-1})) \text{det}(\phi_i(x_j^m))  \prod_{k=1}^m \prod_{j=1}^d dx_j^k. 
%\end{align}
%We want to pass to a limit in \eqref{ibp_smooth} to remove the smooth approximation. 
As discussed in the proof of 
Theorem \ref{largest_particle} we can establish for this smooth approximation that
\begin{align}
\label{ibp_smooth1}
& \int_{\uppone} \det(q^{(\epsilon)}_{n_1+i-j}(y_j - x_i))\det(q^{(\epsilon)}_{n_2 - n_1+i-j}(z_j - y_i)) dy_1 \ldots dy_d \nonumber \\
& = 
\int_{\uppone}\det(q^{(\epsilon)}_{n_1+i-d}(y_j - x_i))\det(q^{(\epsilon)}_{n_2-n_1+d-j}(z_j - y_i)) dy_1 \ldots dy_d. 
\end{align}
We now take a limit in $\epsilon$ of both sides of the equation. This follows as in Lemma 5 of \cite{FW}
except with the following additional complication when taking the limit of the right hand side. 
A term in the Laplace expansion of the right hand side of \eqref{ibp_smooth1} corresponding to permutations $\sigma$ and $\rho$ is 
 \begin{equation*}
\int_{\uppone} \prod_{i=1}^d q^{(\epsilon)}_{n_1+\sigma(i) -d}(y_i - x_{\sigma(i)})
q^{(\epsilon)}_{n_2 - n_1+i - \rho(i)}(z_{\rho(i)} - y_i) dy_1 \ldots dy_d.
\end{equation*}
If $\sigma$ is the identity then  
$\prod_{i=1}^d q^{(\epsilon)}_{n_1+i-d}(y_{i} - x_i)$ is bounded uniformly in $\epsilon$ for $0 \leq x_i \leq y_{i}$ if and only if 
 $n_1 \geq d - 1$. This is the reason for the condition $n_1 \geq d -1$. Once this is imposed the limit in $\epsilon$ can be taken as in Lemma 5 of \cite{FW}.
 %Thus we can pass to a limit in \eqref{ibp_smooth1} when 
%$x_1 < \ldots < x_n$ and $z_1 < \ldots < z_n$. In \cite{FW} it also shown that both sides 
%are continuous in $x$ and $z$ for $x, z \in W_n^+$ so that we can remove the condition 
%$x_1 < \ldots < x_n$ and $z_1 < \ldots < z_n$. 
By the same method, we can establish that
\begin{align*}
& \int_{\upptwo} \det(q_{n_2-n_1+d - j}(y_j - x_i))\det(q_{n_3 - n_2+i-j}(z_j - y_i)) dy_1 \ldots dy_d \nonumber \\
& = 
\int_{\upptwo}\det(q_{n_2-n_1}(y_j - x_i))\det(q_{n_3 - n_2+d -j}(z_j - y_i)) dy_1 \ldots dy_d. 
\end{align*}
In this case there is no need for a constraint on $n_2 - n_1.$
Finally we pass to the limit in 
%\begin{align}
%& \int_{\uppone} \ldots \int_{\uppm} \det(q_{n_1}^{(j-i)}(x_j^1 - x_i^0))\det(q_{n_2-n_1}^{(j-i)}(x_j^2 - x_i^1))\nonumber  \\
%& \ldots \det(\tilde{q}_{n_m-n_{m-1}}^{(j-i)}(x_j^m - x_i^{m-1}))  \prod_{k=1}^{m-1} \prod_{j=1}^d dx_j^k \nonumber \\
%& = 
%\int_{\uppone} \ldots \int_{\uppm} \det(q_{n_1}^{(d-i)}(x_j^1 - x_i^0))\det(q_{n_2-n_1}(x_j^2 - x_i^1)) \nonumber \\
%& \ldots \det(\tilde{q}_{n_m-n_{m-1}}(x_j^m - x_i^{m-1})) \prod_{k=1}^{m-1} \prod_{j=1}^d dx_j^k. 
%\end{align}
%First we have
%\begin{align}
%\label{ibp_smooth3}
%& \int_{\uppone} \det(\tilde{q}_{n_1}^{(j-i)}(y_j - x_i))\det(\tilde{q}_{n_2-n_1}^{(j-i)}(z_j - y_i)) dy_1 \ldots dy_d \nonumber \\
%& = 
%\int_{\uppone}\det(\tilde{q}_{n_1}^{(d-i)}(y_j - x_i))\det(\tilde{q}_{n_2-n_1}(z_j - y_i)) dy_1 \ldots dy_d 
%\end{align}
\begin{align*}
& \int_{\uppm} \det(q^{(\epsilon)}_{n_m-n_{m-1}+d-j}(x_j^m - x_i^{m-1})) \text{det}(\phi_i^{(d-j)}(x_j^m)) dx^m_1 \ldots dx^m_d\\
& = \int_{\uppm} \det(q^{(\epsilon)}_{n_m-n_{m-1}}(x_j^m - x_i^{m-1})) \text{det}(\phi_i(x_j^m)) dx^m_1 \ldots dx^m_d
\end{align*}
which is straightforward since every function is smooth. 
The justification for the smallest particles is similar except we start with the $x^m_j$ and end with the $x^1_j$.  The condition $n_1 \geq d -1$ is replaced by the condition $n_m - n_{m-1} \geq d-1$.

\paragraph{Acknowledgment.} 
We are very grateful to the reviewer for their detailed reading and for their helpful and constructive comments. 

\appendix

\section{Doob h-transforms for ordering and interlacing}
\label{h_transform}
The harmonic function in Theorem \ref{harmonic} and tail asymptotics in Theorem \ref{tail_asy} give two ways of defining an exponential random walk conditioned to stay ordered. 
%The resulting processes coincide for $\lambda_1 \geq \ldots \geq \lambda_d$ 
%but will be different otherwise.
Suppose first either that $\lambda_1 > \ldots > \lambda_d$ or that all rates are equal.
Recall the function $h$ from Theorem \ref{harmonic}
 satisfies $\E_x(h(S(1)) 1_{\tau > 1}) = h(x)$ and $h(x) > 0$ for all $x \in W^d$.
We can define
$(Z(n))_{n \geq 0} = (Z_1(n), \ldots, Z_d(n))_{n \geq 0}$ as a change of measure of $(S(n))_{n \geq 0}$ using the harmonic function $h$. For bounded measurable $f$,
\begin{align*}
\E_x[f(Z(k) : 0 \leq k \leq n)] & = \E_x\left[\frac{h(S(n))}{h(x)} f(S(k) : 0 \leq k \leq n) 1_{\{ \tau > n\}}\right].
\end{align*}
This defines a transformed process which is a Markov chain on $W^d$
with transition densities 
\[
\P_x(Z(n) \in dz) = \frac{h(z)}{h(x)} \P_x(S(n) \in dz, \tau > n), \quad x, z \in W^d. 
\]
We refer to $(Z(n))_{n \geq 0}$ as a 
(Doob) $h$-transform.

In the case $\lambda_1 < \ldots < \lambda_d$ we still have
$\E_x [h(S(1)) 1_{\tau > 1}] = h(x)$ but now $h(x) < 0$ on $W^d$. Hence we can use $(-h)$ to define a Doob $h$-transform. The transition densities of the $h$-transformed process are given by using the definition of $h$, Proposition \ref{transition_density} and cancelling the terms in $\lambda_i$ which can be brought outside of the determinant as prefactors. This gives
\[
%\frac{(-h(z))}{(-h(x))} \P_x(S(n) \in dz, \tau > n) = 
\prod_{j=1}^d \lambda_j^n \frac{\det(\lambda_i^{-j} e^{-\lambda_i z_j})_{i, j = 1}^d}{\det(\lambda_i^{-j} e^{-\lambda_i x_j})_{i, j = 1}^d} \det(q_{n+i-j}(z_j - x_i))_{i, j = 1}^d.
\]
This is invariant under permutations of the $\lambda_i$. Thus the $h$-transformed process in the case 
$\lambda_1 < \ldots < \lambda_d$ agrees with the case $\lambda_1 > \ldots > \lambda_d$.

Alternatively we can define $(\hat{Z}(n))_{n \geq 0} = (\hat{Z}_1(n), \ldots, \hat{Z}_d(n))_{n \geq 0}$ by conditioning on $\{\tau > m\}$ and then taking the limit $m \rightarrow \infty.$
For bounded measurable $f$, 
\begin{align*}
 \E_x [f(\hat{Z}(k) : k \leq n)] = 
 \lim_{m \rightarrow \infty} \E_x \left[f(S(k) : k \leq n) 1_{\{\tau > n\}} \frac{\P_{S(n)}(\tau > m - n)}{\P_x(\tau > m)}\right]. 
\end{align*}
Theorem \ref{tail_asy} gives the asymptotics of the ratio on the right hand side. 
In the case when either $\lambda_1 > \ldots > \lambda_d$ or all rates are equal, 
then this definition of $(\hat{Z}(n))_{n \geq 0}$ coincides with the definition of $(Z(n))_{n \geq 0}$ as an $h$-transform.

If $\lambda_1 < \ldots < \lambda_d$ then using part (iii) of Theorem \ref{tail_asy},
\[
\frac{\P_{z}(\tau > m - n)}{\P_x(\tau > m)} \sim e^{\gamma n} \frac{e^{\sum_{i=1}^d ( \lambda_i- \bar{\lambda} ) z_i} h^{(\bar{\lambda})}(z)}{e^{\sum_{i=1}^d ( \lambda_i- \bar{\lambda} ) x_i} h^{(\bar{\lambda})}(x)}, \quad m \rightarrow \infty.
\]
Therefore $(\hat{Z}(n)_{n \geq 0}$ has transition densities
\begin{align*}
& e^{\gamma n}\frac{e^{\sum_{i=1}^d ( \lambda_i- \bar{\lambda} ) z_i} h^{(\bar{\lambda})}(z)}{e^{\sum_{i=1}^d ( \lambda_i- \bar{\lambda} ) x_i} h^{(\bar{\lambda})}(x)} \prod_{j=1}^d \lambda_j^n e^{-\sum_{j=1}^d \lambda_j(z_j - x_j)}  \det(q_{n+i-j}(z_j - x_i))_{i, j = 1}^d \\
& = \frac{ h^{(\bar{\lambda})}(z)}{ h^{(\bar{\lambda})}(x)} \bar{\lambda}^{nd} e^{-\sum_{j=1}^d \bar{\lambda}(z_j - x_j)}  \det(q_{n+i-j}(z_j - x_i))_{i, j = 1}^d.
\end{align*}
This agrees with a Doob $h$-transform of an exponential random walk with equal rates all given by 
$\bar{\lambda}$ and using $h^{(\bar{\lambda})}$ as the harmonic function. 
Thus the definitions of  $(Z(n))_{n \geq 0}$ and $(\hat{Z}(n))_{n \geq 0}$ do not coincide in the case $\lambda_1 < \ldots < \lambda_d$. This has been observed for one-dimensional random walks, see \cite{bertoin_doney}. 

%The process $(Z(n))_{n \geq 0}$ with $h$-transform $\lvert h \rvert$ is invariant under permutations of $\lambda_1, \ldots, \lambda_d$.
%Theorem \ref{tail_asy} (iii) shows that if $\lambda_1 < \ldots < \lambda_d$ then $(\hat{Z}(n))_{n \geq 0}$ is an exponential random walk with equal rates 
%$\bar{\lambda}$ conditioned to stay positive using an $h$-transform in the case of equal rates. 

All of the above has an analogue where ordering is replaced by interlacing. The only difference comes from the fact that $h$ has been defined on all of $W^d$ while $\har$ has been defined only on $\text{int}(W^d)$.
Suppose either that $\lambda_1 > \ldots > \lambda_d$ or that all rates are equal. We define an \emph{interlaced exponential random walk} as an $h$-transform
$(Y(n))_{n \geq 0} = (Y_1(n), \ldots, Y_d(n))_{n \geq 0}$ satisfying for $x \in \text{int}(W^d)$ and bounded measurable $f$ that
\begin{align*}
\E_x[f(Y(k) : 0 \leq k \leq n)] & = \E_x\left[\frac{\har(S(n))}{\har(x)} f(S(k) : 0 \leq k \leq n) 1_{\{ \rho > n\}}\right].
\end{align*}
This defines a Markov chain on $\text{int}(W^d)$. The reason that $\har$ has been defined on $\text{int}(W^d)$ is that if the starting points coincide then almost surely 
the interlacing condition will not be satisfied even after a single step. This corresponds to the fact that $\har(x) \rightarrow 0$ 
as $x \rightarrow \partial W^d$. It is therefore not immediately obvious how to start $(Y(n))_{n \geq 0}$ 
from the boundary of $W^d$. We will focus on the case where $Y(0) \equiv 0$. 

For $x \in \text{int}(W^d)$ 
the transition densities of $Y$ are given by 
\begin{align*}
\P_x(Y(n) \in dz) & = \frac{\har(z)}{\har(x)} \prod_{j=1}^d \lambda_j^n e^{-\sum_{j=1}^d \lambda_j(z_j - x_j)} \det(q_n(z_j - x_i))_{i, j = 1}^d dz \\
& = \frac{\det(e^{-\lambda_i z_j})_{i, j = 1}^d}{\det(e^{-\lambda_i x_j})_{i, j = 1}^d}  \prod_{j=1}^d \lambda_j^n  \det(q_n(z_j - x_i))_{i, j = 1}^d dz.
\end{align*}
For $n \geq d$ take a limit as $x \rightarrow 0$ using \eqref{singular_det_limit} to find
\[
\lim_{x \rightarrow 0} \P_x(Y(n) \in dz) =  \frac{ \prod_{j=1}^d \lambda_j^n \prod_{j=1}^d z_j^{n-d} \Delta(z) \det(e^{-\lambda_i z_j})_{i, j = 1}^d}{\prod_{j=1}^d (n-j)! \Delta(\lambda)} dz.
\]
The condition that $n \geq d$ ensures differentiability of the functions inside the matrix in order to apply 
\eqref{singular_det_limit}. 
This defines an entrance law for the interlaced random walk $(Y(n))_{n \geq d}$ started from zero. 

\section{Connections to other models}
\label{exact_identities}
Ordered exponential random walks can be connected to a variety of other models. All of these connections rely on the initial condition being zero. 

%As discussed in Section \ref{intro} this is the output process of applying RSK to last passage percolation with exponential data. 
%%\textcolor{blue}{In Proposition \ref{tail.asy.rho} we prove tail asymptotics for $\rho$ which give an alternative interpretation
%%to the process $(Y(n))_{n \geq 0}$ as a random walk with exponential increments conditioned on the event that $\{\rho > m\}$ followed by taking the limit $m \rightarrow \infty$. }
%
%In this section we adapt various constructions which are known to be related to \emph{interlaced} exponential random walks to the case of \emph{ordered} exponential random walks.
%% \textcolor{blue}{One consequence is to provide three different proofs of \eqref{process_identities}.}

\subsection{Last passage percolation}
\label{zero_section}
In Section \ref{coupling} we defined a coupling that represents an ordered random walk as an interlaced random walk started from a random initial condition. There is a variant of this coupling that we only use in this subsection where we instead represent an interlaced random walk as an ordered random walk started from a random initial condition. 
We consider this only started from zero.

From the same independent collection of exponential random variables $(X_{ij})_{i \geq 1, 1 \leq j \leq d}$ with rates $\lambda_j > 0$
we define 
\begin{align*}
S_j(0) & = 0, &  1 \leq j \leq d, \\
S_j(k) & = S_j(k-1) + X_{k j}, & k \geq 1, 1 \leq j \leq d,
\end{align*}
and 
\begin{align*}
\mathcal{S}_j(k) & = 0, & 0 \leq k \leq d-j, 1 \leq j \leq d, \\
\mathcal{S}_j(k) & = \mathcal{S}_j(k-1) + X_{k-d+j, j}, & k \geq d - j +1, 1 \leq j \leq d.
\end{align*}
We have, see Figure \ref{second_coupling} for an illustration,
\[
S_j(k) = \mathcal{S}_j(k + d - j), \qquad 1 \leq j \leq d, k \geq 0. 
\]

\begin{figure}
\centering
 \begin{tikzpicture}[scale = 1]
 \node at (0, -2) {$\mathcal{S}_1(1) = 0$};
 \node at (0, -1) {$\mathcal{S}_1(2) = 0$};
  \node at (4, -1) {$\mathcal{S}_{2}(2) = S_2(1)$};
    \node at (0, 0) {$\mathcal{S}_{1}(3) = S_1(1)$};
        \node at (4, 0) {$\mathcal{S}_{2}(3) = S_2(2)$};
          %  \node at (8, 0) {$\mathcal{S}_{3}(3) = S_3(3)$};
                        \node at (0, 1) {$\mathcal{S}_{1}(4) = S_1(2)$};
                                  %  \node at (4, 1) {$\mathcal{S}_{2}(4) = 0$};
                                             %   \node at (8, 1) {$\mathcal{S}_{3}(4) = S_3(4)$};
\node at (2, 0.5) {$\leq$};
% \node at (6, 0.5) {$\leq$};
\node at (6, -0.5) {$\leq$};
\node at (2, -0.5) {$\leq$};
\node at (2, -1.5) {$\leq$};
\node at (6, -1.5) {$\leq$};
\node at (2, -2.5) {$\leq$};
\node at (6, -2.5) {$\leq$};

\node at (8, -1) {$\mathcal{S}_3(2) = S_3(2)$};
\node at (8, -2) {$\mathcal{S}_3(1) = S_3(1)$};
\node at (4, -2){$\mathcal{S}_2(1) = 0$};
\node at (0, -3){$0$};
\node at (4, -3){$0$};
\node at (8, -3){$0$};

% 
% \node at (4, 4) {$X_{11}$};
% \draw[->] (3.3, 4) -- (3.7, 4);
%  \draw[->] (4, 3.7) -- (4, 3.3);
% \node at (3, 4) {$X_{12}$};
% \draw[->] (2.3, 4) -- (2.7, 4);
%   \draw[->] (4, 2.7) -- (4, 2.3);
% \node at (2, 4) {$X_{13}$};
% \draw[->] (1.3, 4) -- (1.7, 4);
%   \draw[->] (4, 1.7) -- (4, 1.3);
% \node at (1, 4) {$X_{14}$};
% \draw[->] (3.3, 3) -- (3.7, 3);
%   \draw[->] (3, 3.7) -- (3, 3.3);
% \node at (4, 3) {$X_{21}$};
% \draw[->] (2.3, 3) -- (2.7, 3);
%   \draw[->] (3, 2.7) -- (3, 2.3);
% \node at (3, 3) {$X_{22}$};
% \draw[->] (3.3, 2) -- (3.7, 2);
% \node at (2, 3) {$X_{23}$};
%   \draw[->] (2, 3.7) -- (2, 3.3);
% \node at (4, 2) {$X_{31}$};
% \node at (3, 2) {$X_{32}$};
% \node at (4, 1) {$X_{41}$};
% \draw (0, 4) -- (4, 0);
%  \draw[->] (0.6, 3.6) -- (0.8, 3.8);
%  \draw[->] (1.6, 2.6) -- (1.8, 2.8);
%   \draw[->] (2.6, 1.6) -- (2.8, 1.8);
%    \draw[->] (3.6, 0.6) -- (3.8, 0.8);
%  \draw[snake=coil,segment aspect=1]           (3.7,1.3)  -- (3.3,1.7); 
 \end{tikzpicture}
\caption{The coupling between ordered and interlaced processes used in Section \ref{zero_section}.} 
%The inequalities give the conditioning that is present in \emph{both} interlaced and ordered processes. The $V_j^i$ are a random initial condition which makes the conditionings match.}
\label{second_coupling}
\end{figure}

%
%
%have defined two random walks $(S(n))_{\geq 0}$ started from $x$ and 
%$(\mathcal{S}(n))_{n \geq 0}$ started from a random initial condition $\Psi$. 
%
%We now 
%consider this in the case $x = 0$. 
%We suppose also that the event $A$ holds which means $\Psi \equiv 0$.  

In the case where the rates are ordered as $\lambda_1 > \ldots > \lambda_d$ then the event of positive probability that
$\bigcap_{i \geq 1} \bigcap_{j=2}^d\{S_{j-1}(i) \leq S_j(i)\}$ occurs if and only if the event $\bigcap_{i \geq 1} \{\mathcal{S}(i-1) \prec \mathcal{S}(i)\}$ occurs. Therefore the conditional laws also agree. 
This means that for all $d, n \geq 1$, if $Y_1(0) = \ldots = Y_d(0) = 0$ and $Z_1(0) = \ldots = Z_d(0) = 0$ we have
\begin{equation}
\label{identity_cond_laws2}
(Z_1(n), Z_{2}(n), \ldots, Z_d(n))_{n \geq d} \stackrel{d}{=} 
(Y_1(n+d-1), Y_2(n+d-2), \ldots, Y_d(n))_{n \geq d}. 
\end{equation}
This has been observed in \cite{o_connell_2003} and is related to a bijection between Young tableaux and reverse plane partitions. 
The restriction $n \geq d$ could be removed by modifying the definition of the entrance law for $Y$ in Appendix 
\ref{h_transform}.
In the case of equal rates we use that, 
\[
\lim_{\lambda_1, \ldots, \lambda_d \rightarrow 1} \frac{h^{(\lambda_1, \ldots, \lambda_d)}(x)}{\Delta(\lambda)} = 
\frac{h(x)}{\prod_{j=1}^{d-1} j!}, \quad \lim_{\lambda_1, \ldots, \lambda_d \rightarrow 1} \frac{\mathfrak{h}^{(\lambda_1, \ldots, \lambda_d)}(x)}{\Delta(\lambda)} = 
\frac{\mathfrak{h}(x)}{\prod_{j=1}^{d-1} j!}.
\]
For $h$ this is Lemma \ref{h_continuity}. It can be proven in a similar way for 
$\mathfrak{h}$ using \eqref{singular_det_limit}. This can be used to prove weak convergence of the Doob $h$-transforms as
$\lambda_1, \ldots, \lambda_d \rightarrow 1$. Therefore \eqref{identity_cond_laws2} also holds with $\lambda_1 = \ldots = \lambda_d = 1$.
%If $\lambda_1 < \ldots < \lambda_d$  then
%\eqref{identity_cond_laws2} still holds as long as we use the same definition for the conditioned processes, see Appendix \ref{h_transform}.

Equation \eqref{identity_cond_laws2} connects ordered exponential random walks to last passage percolation. 
It was shown in \cite{johansson_2} for equal rates that the output process of applying the Robinson-Schensted-Knuth (RSK) correspondence to last passage percolation is given by the process $(Y(n))_{n \geq 0}$. 
In particular,
\begin{equation}
\label{johansson_result}
(Y_d(n))_{n \geq d} \stackrel{d}{=} (L(n, d))_{n \geq d}.
\end{equation}
For general rates, see for example \cite{dieker_warren}.
This can be combined with \eqref{identity_cond_laws2} to give
\[
(Z_d(n))_{n \geq d} \stackrel{d}{=} (L(n, d))_{n \geq d}.
\] 
The restriction $n \geq d$ is unnecessary and is removed in the next subsection.

\subsection{Queueing theory}
\label{queueing}
Suppose that $\lambda_1 > \ldots > \lambda_d > 0$ and let $(N_1(t), \ldots, N_d(t))_{t \geq 0}$ be independent Poisson point processes where $N_j$ has rate $\lambda_{d-j+1}$ for $j = 1, \ldots, d$. 
Let $(M_1(t), \ldots, M_d(t))_{t \geq 0}$ denote $(N_1(t), \ldots, N_d(t))_{t \geq 0}$ conditoned on the event that $N_1(t) \leq \ldots \leq N_d(t)$ for all $t \geq 0$. 

O'Connell and Yor \cite{o_connell2002}
% prove that 
%\begin{equation}
%\label{poisson_lpp}
%M_1(t)\stackrel{d}{=} \inf_{0 = t_0 < t_1 < \ldots < t_n = t} \sum_{j=1}^d N_j(t_j) - N_j(t_{j-1})
%\end{equation}
%with equality as processes in $t$. 
%Moreover, they 
proved a representation for $(M_1(t))_{t \geq 0}$ in terms of a queueing network. Consider a series of $(d-1)$ tandem queues. Customers arrive at rate $\lambda_d$ at the first queue which has exponentially distributed services with rate $\lambda_{d-1}$. After departing from the first queue they immediately join the second queue which has service rate 
$\lambda_{d-2}$. This continues until the customer departs from the $(d-1)$-th queue and exits the system. It was shown in \cite{o_connell2002} that $M_1(t)$ counts the number of customers who have departed from the $(d-1)$-th queue by time $t$. 

By reversing the role of space and time, it is possible to give queueing interpretations to ordered exponential random walks. 
For $j = 1, \ldots, d$ define \[
S_j(n) = \inf\{t \geq 0: N_{d-j+1}(t) \geq n\}, \quad n \geq 0 .
\] 
Then $S_1, \ldots, S_d$ are independent random walks with exponential increments with rates $\lambda_1, \ldots, \lambda_d$ started from $S_1(0) = \ldots = S_d(0) = 0$. Moreover, the event $\{N_1(t) \leq \ldots \leq N_d(t) \text{ for all } t \geq 0 \}$ is the same as the event that $\{S_1(n) \leq \ldots \leq S_d(n) \text{ for all } n \geq 0\}$. 
Let $Z_j(n) = \inf\{t \geq 0: M_{d-j+1}(t) \geq n\}$. 
Then $(Z_1(n), \ldots, Z_d(n))_{n \geq 0}$ started from $Z_1(0) = \ldots = Z_d(0) = 0$ is equal in distribution to the times at which jumps occur in Poisson point processes conditioned not to collide. 
In particular, the queueing interpretation of $(M_1(t))_{t \geq 0}$ 
gives a queueing interpretation of $(Z_d(n))_{n \geq 0}$ as the process in $n$ of the departure times of the $n$-th customer from the $(d-1)$-th queue in the series of tandem queues defined above.

This queueing interpretation of $(Z_d(n))_{n \geq 0}$ can then be further connected with
last passage percolation and Equation \eqref{process_identities}. It is known that departure times from tandem queueing networks satisfy the same recursion equation as last passage percolation.
For $k \geq 0$ let $D(k, 1)$ denote the $k$-th arrival time at the first queue and $D(k, j+1)$ denote the $k$-th departure from the $j$-th queue for $j = 1, \ldots, d-1$. The structure of the queueing network means that
\[
D(k, j) = \max(D(k, j-1), D(k-1, j)) + e_{kj}, \quad k \geq 0, j = 1, \ldots, d.
\]
Note that last passage percolation times satisfy the same equation.

Therefore we can observe that
\[
(Z_d(n))_{n \geq 0} \stackrel{d}{=} (D(n, d))_{n \geq 0}
\]
in two different ways:
\begin{enumerate}[(i)]
\item Apply the result of O'Connell Yor \cite{o_connell2002} and reverse the role of space and time as described  
in this subsection.
\item Apply the connection between interlaced and exponential random walks in Equation \eqref{identity_cond_laws2}, 
the result of Johansson \cite{johansson_2} stated in \eqref{johansson_result} and then the above connection between last passage percolation and departure times in queues. This argument adds in an extra constraint $n \geq d$ but more careful arguments of this type could remove this.
\end{enumerate}

The case of equal rates can then be established by taking limits as in Section \ref{zero_section}.
 
%Moreover, \eqref{poisson_lpp} can be used to give a direct proof of the relation between ordered exponential random walks and last passage percolation. We have for all $n \geq 0$ and $x > 0$,
%\begin{align*}
%\P(Z_d(n) > x) & = \P(M_1(x) < n) \\
%& = \P\big(\inf_{0 = t_0 < t_1 < \ldots < t_d = x} \sum_{j=1}^d N_j(t_j) - N_j(t_{j-1}) < n\big) \\
%& = \P\big(\sup_{0 = k_0 < k_1 < \ldots < k_d = n} \sum_{j=1}^d X_j(k_j) - X_j(k_{j-1}-1) > x\big) \\
%& = \P(G(d, n) > x).
%\end{align*}
%The penultimate equality can be observed from the fact that XXX. 
%

\subsection{Push-block dynamics}
%\textcolor{blue}{A useful property is that the times at which particles jump in the Totally Asymmetric Exclusion Process (TASEP) can be interpreted through  last passage percolation. To our knowledge, it is not known that this can be applied at the level of Gelfand-Tsetlin patterns to construct a process with a bottom layer given by an ordered exponential random walk. } 

Processes on Gelfand-Tsetlin patterns where particles attempt to make independent geometrically distributed jumps while experiencing pushing and blocking interactions have been constructed in \cite{bf_anisotropic} and Section 2.2 of \cite{warren_windridge}.
Both involve particles being blocked by the positions of other particles at the \emph{previous time step}. The bottom layer evolves as
an \emph{interlaced} exponential random walk. The example below does not immediately appear to fit into the general framework in \cite{bf_anisotropic}.

Suppose that $\lambda_1 \geq \ldots \geq \lambda_d > 0$.
We will consider processes on Gelfand-Tsetlin patterns taking values in  
the state space 
\[
\K_d = \{x^k_j \in \mathbb{R} : 1 \leq j \leq k \leq d \text{ with } x^{k-1}_{j-1} \leq x^{k}_j \leq x^{k-1}_j\} 
\]
with the conventions that $x_0^k :=-\infty$ and $x_{k+1}^k = \infty$. 

We start by defining a process considered in Section 2.1 of \cite{warren_windridge} taking values in $\K_d$  and 
denoted by $(M_j^k(t): 1 \leq j \leq k \leq d, t \geq 0)$ started from $M_j^k(0) = 0$. 
Each particle $M^k_j$ attempts a nearest-neighbour jump to the right at rate $\lambda_{d-k+1}$ that may be subject to two possible interactions. Suppose the particle with position $M^k_j(t_{-})$ before the possible jump attempts to jump at time $t$.
\begin{itemize}
\item \emph{Blocking}. If $M^k_j(t_{-}) = M^{k-1}_{j}(t_{-})$ then any rightward jump is suppressed so that $M^k_j(t) = M^k_j(t_{-})$. 
\item \emph{Pushing}. If $M^k_j(t_{-}) = M^{k+1}_{j+1}(t_{-})$ and $M^{k}_{j}(t) = M^{k}_j(t_{-}) + 1$ then this pushes the particle in level $k+1$ so that $M^{k+1}_{j+1}(t) = M^{k+1}_{j+1}(t_{-}) + 1.$ This jump may then cause further jumps in levels $k+2, ..., d$. 
\end{itemize}
An argument involving intertwinings shows, for example in Theorem 2.1 of \cite{warren_windridge}, that $(M^{d}_1(t), \ldots, M^{d}_d(t))_{t \geq 0}$ is a 
collection of Poisson point process with rates $\lambda_{d} \leq \ldots \leq \lambda_1$ 
conditioned to satisfy $M_1^{d}(t) \leq \ldots \leq M_d^{d}(t)$ for all $t \geq 0$ using the harmonic function $\mathfrak{h}$.

We now construct a second process on $\K_d$ with push-block interactions by reversing the role of space and time. 
For $1 \leq j \leq k \leq d$ let
\begin{equation}
\label{defnZ}
Z^k_j(n) = \inf\{t \geq 0: M_{k-j+1}^k(t) \geq n \}, \quad n \geq 0.
\end{equation}
This defines a discrete-time process on $\K_d$ denoted by $(Z^k_j(n): 1 \leq j \leq k \leq d, n \in \mathbb{N}_0)$ 
and started from $Z^k_j(0) = 0$. We first describe the dynamics on this array before then justifying that this dynamics arises from \eqref{defnZ}. 

At time $n$ we update each layer starting with $Z^1_1$, then $Z^2_1, Z^2_2$, and so on until $Z^d_1, \ldots, Z^d_d$. 
Let $(e^k_j(n) : 1 \leq j \leq k \leq d, n \geq 0)$ be independent exponential random variables with rate $\lambda_k$. 
Suppose we have updated the positions of $Z^1_1, Z^2_1, Z^2_2, \ldots, Z^{k-1}_1, \ldots, Z^{k-1}_{k-1}$. Then for $j = 1, \ldots, k$ each $Z^k_j$ attempts an independent jump according to an exponential random variable with rate 
$\lambda_{k}$ subject to two types of interaction:
\begin{itemize}
\item \emph{Pushing}. If $Z^{k-1}_{j-1}(n) > Z^{k}_{j}(n-1)$ then $Z^k_j$ is pushed to position $Z^{k-1}_{j-1}(n)$ before performing its exponential jump.
\item \emph{Blocking}. The proposed exponential jump from this pushed position takes value 
$\max(Z_{j-1}^{k-1}(n), Z_j^k(n-1)) + e_j^k(n)$.
If this exceeds $Z^{k-1}_{j}(n)$ then the overshoot is blocked and we set 
$Z^k_j(n) = Z^{k-1}_{j}(n)$. 
\end{itemize}
Therefore the combination of pushing and blocking interactions involves setting
\begin{equation}
\label{push_block_recursion}
Z_j^k(n) = \min(Z^{k-1}_j(n), \max(Z_{j-1}^{k-1}(n), Z_j^k(n-1)) + e_j^k(n)).
\end{equation}
%\begin{itemize}
%\item \emph{Blocking}. If $Z^k_j(n-1) + e^k_j(n) > Z^{k-1}_{j}(n)$ then the overshoot is blocked and we set 
%$Z^k_j(n) = Z^{k-1}_{j}(n)$. 
%\item \emph{Pushing}. If $Z^{k-1}_{j-1}(n) > Z^{k}_{j}(n-1)$ then $Z^k_j$ is pushed to position $Z^{k-1}_{j-1}(n)$ before performing its exponential jump. This jump may still be blocked as above so we need to further split into two cases. 
%If $Z^{k-1}_{j-1}(n) + e^k_j(n) \leq Z^{k-1}_j(n)$ then there is no blocking and we set $Z^k_j(n) = Z^{k-1}_{j-1}(n) + e^k_j(n).$
%If $Z^{k-1}_{j-1}(n) + e^k_j(n) > Z^{k-1}_j(n)$ then the overshoot is blocked and we set  $Z^k_j(n) = Z^{k-1}_{j}(n)$.
%\end{itemize}
%Otherwise there are no interactions and we set $Z^{k}_j(n) = Z^{k-1}_j(n-1) + e^k_j(n).$
We now explain how these interactions are a consequence of the push-block interactions in the definition of the $M^k_j$ and the definition of the $Z^k_j$ in terms of $M^k_j$ given in \eqref{defnZ}.

Suppose first that $\inf\{t \geq 0: M_{k-j+1}^k(t) \geq n \}$ is attained without occurring due to a push by 
$M_{k-j}^{k-1}$. 
This jump in the particle labelled $M_{k-j+1}^k$ to site $n$ becomes possible after both $M_{k-j+1}^k$ has reached site $n-1$ and $M_{k-j+1}^{k-1}$ has reached site $n$ (so that the jump is not blocked). 
Thus the jump becomes possible at the time given by the maximum of $Z_{j}^k(n-1) = \inf\{t \geq 0: M_{k-j+1}^k(t) \geq n -1\}$ and
$Z_{j-1}^{k-1}(n) =  \inf\{t \geq 0: M_{k-j+1}^{k-1}(t) \geq n\}$. 
The jump then occurs after a waiting time given by an
exponential random variable denoted $e_j^k(n)$ that is independent of all other random variables. 
The other option is that $M_{k-j}^{k-1}$ jumps to site $n$ and pushes $M_{k-j+1}^k$. 
This occurs at time $Z^{k-1}_{j}(n)$. The minimum over these two possibilities gives the first  
time that $M_{k-j+1}^k$ jumps to site $n$. Therefore
\[
Z_j^k(n) = \min(Z^{k-1}_j(n), \max(Z_{j-1}^{k-1}(n), Z_j^k(n-1)) + e_j^k(n)).
\]
This agrees with \eqref{push_block_recursion}.

Suppose that $\lambda_d > \ldots > \lambda_1$. 
As the $(M^{d}_1(t), \ldots, M^{d}_d(t))_{t \geq 0}$ are 
Poisson point process with rates $\lambda_{1} < \ldots < \lambda_d$ conditioned on the event that 
$\{M^d_1(t) \leq \ldots \leq M^d_d(t) \text{ for all } t \geq 0\}$ then 
$(Z^{d}_1(n), \ldots, Z^{d}_d(n))_{n \geq 0}$ are exponential random walks with rates $\lambda_d > \ldots > \lambda_1$ conditioned on the event that $\{Z^d_1(n) \leq \ldots Z^d_d(n) \text{ for all } n \geq 0\}$. 
The two interpretations of $(Z^d_d(n))_{n \geq 0}$ as either the top particle in an ordered exponential random walk
or as the 
top particle in a system with pushing interactions give another proof of Equation \eqref{process_identities}. The case of equal rates can be established by taking limits as in Section \ref{zero_section}. 
The point of this Section is that the underlying dynamics on the Gelfand Tsetlin pattern involves a bottom layer evolving as an ordered rather than interlaced exponential random walk.

\bibliographystyle{abbrv}
 \bibliography{ExpOrdWalk}

\end{document}